\documentclass[11pt]{amsart}

\usepackage[utf8]{inputenc}
\usepackage[colorlinks]{hyperref}
\usepackage{mathtools, amsthm, amsfonts, amssymb, amsmath}
\usepackage{accents} 
\usepackage{microtype}
\usepackage{multicol}
\usepackage{cleveref} 

\hypersetup{
  linkcolor=[rgb]{0.3,0.3,0.6},
  citecolor=[rgb]{0.2, 0.6, 0.2},
  urlcolor=[rgb]{0.6, 0.2, 0.2}
}

\usepackage[colorlinks]{hyperref}
\makeindex

\usepackage{thmtools}
\usepackage[margin=1in]{geometry}
\usepackage{xcolor}
\usepackage{tikz}
\usepackage{tikz-cd}

\usetikzlibrary{shapes.geometric, arrows.meta, positioning}

\tikzstyle{startstop} = [ellipse, draw, align=center, minimum height=3em, minimum width=6em]
\tikzstyle{process} = [rectangle, draw, align=center, minimum height=3em, minimum width=6em]
\tikzstyle{decision} = [diamond, draw, align=center, aspect=2, minimum height=2em, minimum width=4em]
\tikzstyle{inout} = [draw,
    shape=rectangle,
    minimum height=1.5em,
    minimum width=3em,
    shape border rotate=0,
    xslant=0.3, 
    align=center
]

\tikzstyle{labelbox} = [font=\small, align=center]

\usetikzlibrary{decorations.pathreplacing}
\usetikzlibrary{patterns,external}
\usetikzlibrary{calc}
\usetikzlibrary{shapes.multipart}
\usetikzlibrary{fit,positioning}

\usepackage[numbers,sort]{natbib}

\usepackage[linesnumbered,commentsnumbered,ruled,vlined]{algorithm2e}

\definecolor{fondo}{rgb}{0.898,0.996,0.898}

\pgfkeys{/tikz/.cd,
  K/.store in=\K,
  K=1   
   }

\title{How to recover a permutation group amidst errors}
\author{Taylor Brysiewicz and Juhee Kim}

\newcommand{\I}{{\mathbb{I}}}

\newcommand{\mydef}[1]{{\color{blue}#1}}
\newcommand{\mycomment}[1]{}

\newcommand{\iid}{i.i.d.~}

\usepackage[all,cmtip]{xy}

\renewcommand{\tilde}[1]{\widetilde{#1}}
\renewcommand{\bar}[1]{\overline{#1}}

\theoremstyle{definition}
\newtheorem{theorem}{Theorem}[section]

\newtheorem{lemma}[theorem]{Lemma} 
\newtheorem{corollary}[theorem]{Corollary} 
 
\newtheorem{example}[theorem]{Example}
\newtheorem{remark}[theorem]{Remark}
\newtheorem{proposition}[theorem]{Proposition}

\renewcommand{\Pr}{\textrm{Pr}}

\keywords{\hspace{-8pt} permutation groups; probabilistic group theory; randomized algorithms; monodromy groups}
\subjclass{20P05,  	20-08, 68W20, 20B35}

\begin{document}
\begin{abstract}
We consider the problem of recovering a permutation group $G \leq S_n$ from an error-prone sampling process $X$.  We model $X$ as an $S_n$-valued random variable, defined as a mixture of the uniform distributions on $G$ and $S_n$. Our suite of tools recovers properties of $G$ from $X$ and bolsters our main method for recovering $G$ itself. Our algorithms are motivated by the numerical computation of monodromy groups, a setting where such error-prone sampling procedures occur organically. 
\end{abstract}

\maketitle

\vspace{-16pt}

\section{Introduction}
We consider the problem of recovering a permutation group $G \leq S_n$ from a sampling process which returns a uniform random element of $G$ with probability $1-p$ and otherwise returns a uniform element of $S_n$.  Outcomes of this process are those of an $S_n$-valued random variable $X=X(G,p)$:
\begin{align*}
\Pr(X = \sigma) &= \begin{cases} 
      (1-p)\frac{1}{|G|}+p\frac{1}{n!} &  \sigma  \in G \\
      p\frac{1}{n!} & \sigma \not\in G 
   \end{cases} .
\end{align*}

One approach for recovering $G$ from $X$ is to take $k$ independent observations $X_1,\ldots,X_k$ of $X$ and return the group they generate.  We call this the \emph{naive group recovery algorithm} (\autoref{alg:naivealgorithm}). Used in practice  to heuristically compute monodromy groups using numerical algebraic geometry \cite{Asante2025,Asante, DuffMinimal} it requires a choice of $k$ and is extremely sensitive to errors: if any observation $X_i$ is not in $G$ then the answer is incorrect. Worse, Luczak and Pyber's result (\autoref{prop:luczakpyber}) implies that wrong answers are often  \textit{very} wrong, in that the returned group is likely a   \textit{giant}:  $A_n$ or $S_n$.

The naive algorithm is  randomized, with some probability $\gamma(G,p,k)$ of success. Thus, it may be \emph{amplified} by repeating it $N$ times and returning the mode of the output (\autoref{alg:amplifiednaivegrouprecovery}). This is the \textbf{N}a\textbf{i}ve \textbf{A}mplified \textbf{G}roup \textbf{R}ecovery \textbf{A}lgorithm (\textbf{NiAGRA}). When $\gamma(G,p,k)>\frac 1 2$, amplification recovers $G$ with arbitrarily high confidence as $N \to \infty$. Thus, we give two error detection techniques which improve $\gamma(G,p,k)$ beyond $\frac 1 2$ if $G$ has some property $\mathcal Q$ or all $\sigma \in G$ have some property $\mathcal P$:
\begin{enumerate}
\item[] \hspace{-30pt} \textit{$\mathcal P$ Sample Error Detection:} Observe $X$ until the observed permutation has property~$\mathcal P$.  
\item[] \hspace{-26pt} \textit{$\mathcal Q$ Group Error Detection:} Run naive group recovery until the output has property $\mathcal Q$.
\end{enumerate}
If no properties are known, \textit{a priori}, our hypothesis tests can find one, using only observations of~$X$:
\[\begin{array}{lll}
\textbf{Giant Test} & \autoref{alg:gianttest} & \text{Determines if } G \text{ is } A_n \text{ or } S_n \text{ (i.e., a \textit{giant})} \\
\textbf{Subgroup Test} & \autoref{alg:subgrouptest} & \text{Determines if } G\leq H \text{ for given }H\\
\quad \text{       Alternating Test} & \,\,\text{ with }H=A_n & \text{Determines if } G \text{ is contained in } A_n \\
\quad \text{       Block Test} &\,\, \text{ with }H=\textrm{Wr}(\mathcal B)& \text{Determines if } \mathcal B \text{ is a block structure of } G \\
\quad \text{       Orbit Refining Test} &\,\, \text{ with }H=S_{\Delta} & \text{Determines if the orbits of } G \text{ refine } \Delta \\
\textbf{$k$-Transitivity Test} & \autoref{alg:transitivitytest} & \text{Determines if } G \text{ is $k$-transitive} \\
\textbf{Orbit Agreement} &  \autoref{alg:OrbitAgreement} & \text{Determines if } G.i=G.j \\
\quad \text{ Single Orbit Recovery} & \autoref{alg:SingleOrbitRecovery} & \text{Determines the orbit } G.i \text{ of } G \text{ containing } i \\
\quad \text{ Orbit Recovery} & \autoref{alg:OrbitRecovery} & \text{Determines the orbits of } G \\
\quad \text{ Orbit Confirmation} & \autoref{alg:OrbitConfirmation} & \text{Determines if } \Omega \subseteq [n] \text{ is an orbit of } G \\
\quad \text{ Heuristic Orbit Recovery} & \autoref{alg:HeuristicOrbitRecovery} & \text{Determines orbits of } G \text{ heuristically}\\
\quad \text{ Block Recovery} & \autoref{alg:BlockRecovery} & \text{Determines all minimal block structures of } G \\
\quad \text{ Primitivity Test} & \autoref{alg:PrimitivityTest} & \text{Determines if } G \text{ is primitive} \\
\end{array}\]

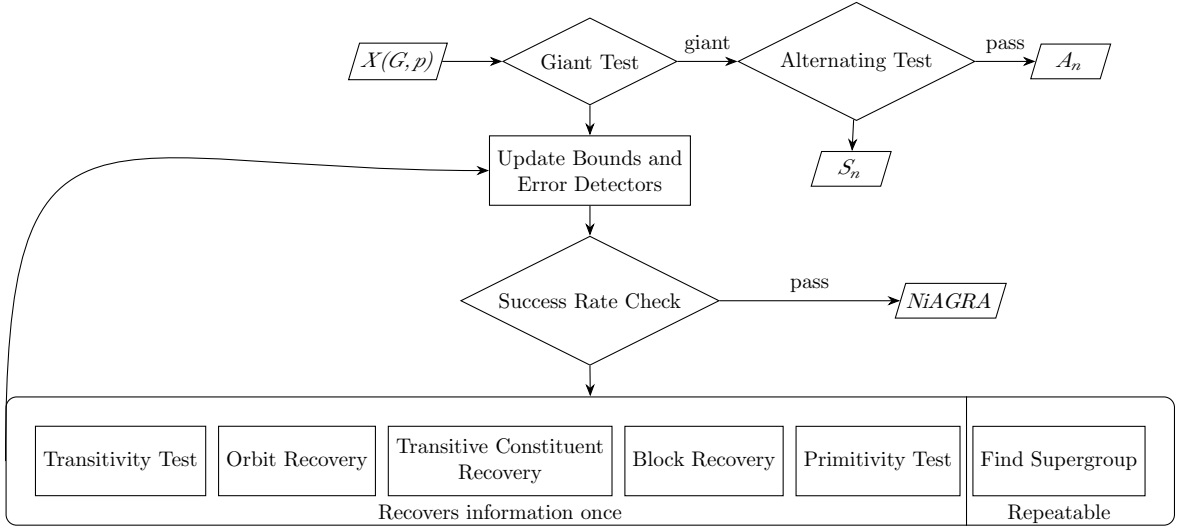
\begin{figure}[!htpb]
\begin{tikzpicture}[scale=0.8, transform shape, node distance=0.5cm and 1cm,  every node/.style={font=\small}, >=Stealth]

\node (input) [inout] {$X(G,p)$};
\node (giant) [decision, right=of input] {Giant Test};

\node (alt) [decision, right= of giant] {Alternating Test};

\node (An) [inout, right= of alt] {$A_n$};
\node (Sn) [inout, below= of alt] {$S_n$};

\draw[->] (input) -- (giant);

\draw[->] (giant) -- node[above]{giant} (alt);
\draw[->] (alt) -- node[above]{pass} (An);
\draw[->] (alt) -- (Sn);

\node (update) [process, below=of giant] {Update Bounds and\\ Error Detectors };
\node (success) [decision, below=of update] {Success Rate Check};

\draw[->] (giant) -- (update);
\draw[->] (update) -- (success);

\node (niagra) [inout, right=3cm of success] {NiAGRA};
\draw[->] (success) -- node[above]{pass} (niagra);

\node (t1) [process, below=1cm of success, xshift=-7.8cm] {Transitivity Test};
\node (t2) [process, right=0.2cm of t1] {Orbit Recovery};
\node (t3) [process, right=0.2cm of t2, label=below:{Recovers information once}] {Transitive Constituent\\ Recovery};
\node (t4) [process, right=0.2cm of t3] {Block Recovery};
\node (t5) [process, right=0.2cm of t4] {Primitivity Test};
\node (t6) [process, right=0.2cm of t5, label=below:{Repeatable}] {Find Supergroup};

\node (converge) [below=of t3] {};

\node (tests) [draw, rounded corners, fit=(t1)(t2)(t3)(t4)(t5)(t6),
               inner sep=1em] {};
               
               \draw
  ($(t6.west |- tests.north)  + (-1mm,0)$) --
  ($(t6.west |- tests.south) + (-1mm,0)$);

\draw[->] (success.south) to[] (tests.north);

\draw[->] (tests.west) to[out=90, in=180, looseness=1.8] (update.west);

\end{tikzpicture}
\caption{A flowchart of our main algorithm, \autoref{alg:main_algorithm}.}
\label{fig:flowchart}
\end{figure}

Each hypothesis test discovers a property about $G$ with some confidence level, which may then be converted into a sample error detector $\mathcal P$ or group error detector $\mathcal Q$. Each involves approximating the sample mean of a random variable $\mathcal T$ via sampling. This sample statistic  differentiates whether $G$ belongs to some ``Family $A$'' or another ``Family $B$'' of permutation groups. Our four main tests are summarized in \autoref{tab:main_tests}. The last row indicates the limiting feature or computation in the test for adversarial or large examples. For example, the transitivity test requires a very accurate approximation of an expected value, and thus a large sample size, to differentiate the group families.

\begin{table}[!htpb]
\begin{tabular}{|r||l|l|l|l|} \hline 
Name of Test &  GiantTest & SubgroupTest$_H$ & TransitivityTest$_k$ & OrbitAgreement$_{i,j}$ \\ \hline 
Test Var $\mathcal T$ & $\langle X_1, X_2 \rangle $ giant & $X \in H$ & $\textrm{Fix}_k(X)$ & $X(i)=j$\\
Family A  & $G \in \{A_n,S_n\}$ & $G \leq H$ & $G$ is $k$-transitive & $G.i = G.j$\\
Family B  & $G \not\in \{A_n, S_n\}$ & $G \not\leq H$ & $G$ is not $k$-transitive& $G.i \neq G.j$ \\
$\mathbb{E}(\mathcal T|A)$ & $> \mathcal L(n,p)$& $=1-p+\frac{p|H|}{n!}$ & $=1$ & $=\frac{1}{|G.i|} \geq \frac{1}{n}$ \\
$\mathbb{E}(\mathcal T|B)$  & $<\mathcal U(n,p)$& $= \frac{1-p}{2}+\frac{p|H|}{n!} \leq \frac{1}{2}$ & $>2-p$ & $=\frac{p}{n}\leq \frac{1}{n}$ \\
Theory & Dixon & LaGrange & Burnside &  \autoref{cor:movement_separation} \\
See Section & \autoref{secsec:giant_test} & \autoref{secsec:subgroup_test} & \autoref{secsec:transitivity_test} & \autoref{secsec:orbit_recovery} \\
Challenge & Requires $p<b_n$ & $H$ membership & Small $\mathbb{E}$ & Small $\mathbb{E}$ \\  \hline 
\end{tabular}
\caption{Summary of our four main tests for obtaining properties of $G$.}
\label{tab:main_tests}
\end{table}

\vspace{-3pt}

Beyond hypothesis tests, we use the following algorithms:
\[\begin{array}{lll}
\textbf{Find Supergroup} & \autoref{alg:findsupergroup} & \text{Iteratively builds a supergroup of }G \\
\textbf{Success Rate Check} & \autoref{alg:successratecheck} & \text{Bounds the success rate of }\autoref{alg:naivealgorithm} \\
\textbf{Transitive Constituent Recovery} & \autoref{alg:TransitiveConstituentRecovery} & \text{Recovers the action of } G \text{ on an orbit}
\end{array}\]
\textit{Find Supergroup} builds-up a supergroup $H$ of $G$ one generator at a time, only stopping when the subgroup test passes for $G \leq H$. The group property $\mathcal Q \colon G \leq H$ or permutation property $\mathcal P\colon \sigma \in H$ may then be used as an error detector. \textit{Success Rate Check} bounds the success rate of the algorithm which is amplified in \textbf{NiAGRA} using known bounds on $p$, $|G|$, and $|\{\sigma \in S_n \mid \mathcal P(\sigma)\}|$. When bounded above $\frac 1 2$ amplification is guaranteed to succeed with arbitrary confidence.  \textit{Transitive constituent recovery}   calls our main algorithm, on sampling procedures derived from $X$,  to compute the \textit{transitive constituents} $G^{\Delta_1},\ldots,G^{\Delta_m}$ of $G$, that is,  the actions of $G$ on its orbits $\Delta=(\Delta_1,\ldots,\Delta_m)$.

These tests and routines are the components of our main algorithm \autoref{alg:main_algorithm}, summarized in \autoref{fig:flowchart}, which culminates in \textbf{NiAGRA}. 
The validity of each subroutine rests upon results from probabilistic group theory, as outlined in \autoref{sec:background}. 
In \autoref{sec:permutation_sampling} we discuss our model $X$ for error-prone permutation sampling as well as sample error detection. In \autoref{sec:naive_group_recovery}, we analyze the naive group recovery algorithm and explain how to combine it with  sample error detection, group error detection, and amplification. 
\autoref{sec:property_recovery} contains our property recovery suite.  We obtain rigorous bounds on the confidence levels of these algorithms,  derived from worst-case-scenario groups and Hoeffding's inequality. 
In \autoref{sec:group_recovery} we give our main algorithm for recovering $G$. It is a combination of the tests developed in \autoref{sec:property_recovery} and the randomized framework of \autoref{sec:naive_group_recovery}.

 \autoref{sec:experiments} contains our experiments. We give strong evidence that repeating \textit{Find Supergroup}  with group error detector $\mathcal Q \colon ``G \text{ is a transitive non-giant}"$ suffices to determine $G$, when applicable. Additionally, we run an experiment which compares three of our main algorithms (the \textit{giant test}, the \textit{transitivity test}, and the non-adaptive version of the \textit{heuristic orbit recovery} algorithm)  when applied to \textit{fixed} sets of $\textrm{i.i.d.}$ observations of $X$. We perform this experiment for all subgroups of $S_{10}$. Our results suggest that, even for high error rates,  error-detected success rates exceed $\frac 1 2$ and so these algorithms may be amplified to arbitrary confidence. 

\noindent \textbf{Motivation:} The motivation for this work is the numerical computation of monodromy groups. In \cite{Sottile}, the authors describe how to compute monodromy groups of branched covers using numerical homotopy continuation, the core method in numerical algebraic geometry \cite{WhatIsNAG}. The algorithm works by first restricting to a general line in the base space, computing the branch locus, and then lifting generators for the fundamental group of the base without the branch locus to compute generators for the monodromy group.  In practice, computing branch loci can be difficult for large problems, and certifying that the numerically computed lifts are correct can be expensive (e.g. see  \cite{Telen}). 

Naive group recovery is a tempting alternative to this approach  due to its computational feasibility. Taking random loops in the base space of the branched cover induces a mysterious distribution on the fundamental group of the base, pushed-forward to a more mysterious one on the monodromy group $G$. Like $X(G,p)$, this  distribution  is subject to error with some rate $p$ (see \cite{HauensteinProbability} for one model for $p$) but unlike $X(G,p)$, it is \textit{not} a mixture of uniform distributions \cite{BCD}. The propensity for errors indicates group-theoretic fragility in this heuristic algorithm, which our work addresses.   In the language of the \textit{monodromy assumptions} (see \cite{MonodromyCoordinates}), our work shows that the third monodromy assumption of perfect path-lifting is not required to reliably recover monodromy groups numerically. 

\noindent \textbf{Code:} Implementations of the algorithms in this manuscript may be found in \href{https://github.com/kate0618kim/GroupRecovery.jl}{this github repository}.

\section*{Acknowledgements}
Both authors are supported by an NSERC Discovery Grant (RGPIN-2023-03551). The authors thank Colva Roney-Dougal for generous and insightful discussions about probabilistic group theory.

\section{Preliminaries on permutation groups and probability}
\label{sec:background}
This section provides the necessary background on permutation groups and probabilistic group theory. The former topic is that of textbooks (e.g. \cite{DixonPermutationGroups}) whereas we pull results on probabilistic group theory from several papers.  Throughout, we let $\mydef{S_n}$ denote the \mydef{symmetric group} on $\mydef{[n]} = \{1,2,\ldots,n\}$ and we call any subgroup  of $S_n$  a \mydef{permutation group} of \mydef{degree} $n$.

\subsection{Orbits and Transitivity}
 A permutation group $G \leq S_n$  naturally acts on $[n]$ with orbits $\mydef{\Delta}=(\mydef{\Delta_1}, \cdots, \mydef{\Delta_m})$ so that $\Delta_1 \sqcup \cdots \sqcup \Delta_m = [n]$. Given a partition $\mydef{\lambda}=(\lambda_1,\ldots,\lambda_m)$ of $[n]$ into $m$ parts of sizes $n_1,\ldots,n_m$, we write
$\mydef{S_{\lambda}} =S_{\lambda_1} \times S_{\lambda_2} \times \cdots \times S_{\lambda_m}\cong S_{n_1} \times S_{n_2} \times \cdots \times S_{n_m}$
for the subgroup of permutations respecting $\lambda$. This group has order  $n_1!n_2! \cdots  n_m!$ and is called a  \mydef{Young subgroup} of $S_n$. Notably, $S_{\Delta}$ is the smallest Young subgroup containing $G$.

A permutation group $G$ is \mydef{transitive} if it has a unique orbit. Trivially, any permutation group acts transitively on each of its orbits. We write $\mydef{G^{\Delta_i}}$ for the permutation group in $S_{\Delta_i} \cong S_{n_i}$ obtained by restricting the action of $G$ to the orbit $\Delta_i$. The groups $\{G^{\Delta_i}\}_{i=1}^m$ are the \mydef{transitive constituents} of $G$. As the name suggests, each transitive constituent is a transitive group.   We remark, however, that the transitive constituents of an intransitive group do not necessarily determine it. 

Given  $\sigma \in S_n$, write $\mydef{\textrm{Fix}(\sigma)}$ for the number of fixed points of $\sigma$. Burnside's lemma relates the number of fixed points of permutations in $G$ with the number of orbits of $G$.

\begin{proposition}[Burnside's Lemma]
\label{lem:Burnside}
Fix $G \leq S_n$ with $m$ orbits, then
\[
\frac{1}{|G|}\sum_{g \in G} \textrm{Fix}(g) = m.
\]
\end{proposition}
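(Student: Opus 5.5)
The plan is to double count the set of pairs
\[
\mathcal F = \{(g,x) \in G \times [n] \mid g(x) = x\}.
\]
Counting by the first coordinate gives $|\mathcal F| = \sum_{g \in G} \textrm{Fix}(g)$, since for a fixed $g$ the admissible $x$ are exactly its fixed points. Counting by the second coordinate gives $|\mathcal F| = \sum_{x \in [n]} |G_x|$, where $G_x = \{g \in G \mid g(x)=x\}$ is the stabilizer of $x$.

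Next I would invoke the orbit--stabilizer theorem. The map $gG_x \mapsto g(x)$ is a well-defined bijection from the left cosets of $G_x$ onto the orbit $G.x$, so $|G_x| = |G|/|G.x|$. Substituting this in, I get
\[
\frac{1}{|G|}\sum_{g\in G}\textrm{Fix}(g) = \sum_{x\in[n]} \frac{1}{|G.x|}.
\]

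Finally I would split the right-hand sum along the orbit decomposition $[n] = \Delta_1 \sqcup \cdots \sqcup \Delta_m$. For each $x \in \Delta_i$ we have $G.x = \Delta_i$, so the terms with $x \in \Delta_i$ contribute $|\Delta_i| \cdot \frac{1}{|\Delta_i|} = 1$. Summing over the $m$ orbits gives $m$.

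No step is a real obstacle; the only substantive input is orbit--stabilizer, and the rest is bookkeeping. Since this is a classical result, one could alternatively just cite \cite{DixonPermutationGroups}.
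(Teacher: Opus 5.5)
Your double-counting argument is correct and complete. Both counts of $\mathcal F$ are right, the orbit--stabilizer bijection $gG_x\mapsto g(x)$ is well defined and injective, and regrouping $\sum_x 1/|G.x|$ by orbits gives exactly $m$.

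The paper gives no proof of this proposition. It quotes Burnside's lemma as textbook background (cf.\ \cite{DixonPermutationGroups}), so there is no alternative argument to compare against.

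Your route does prove more than the averaged identity, and this is worth noticing. Take any $\tau\in G$ with $\tau(i)=j$. Then $\{g\in G : g(i)=j\}$ is the left coset $\tau G_i$. So for every $G\leq S_n$ and every $j\in G.i$ you get $\Pr(Y(i)=j)=|G_i|/|G|=1/|G.i|$ directly, with no transitivity hypothesis. This is exactly the content of \autoref{cor:movement_separation}.

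The paper reaches that corollary in two steps. First, \autoref{lem:prob_of_moving} combines the averaged Burnside identity with a conjugation-symmetry argument, but only for transitive $G$. Second, it passes to transitive constituents via \autoref{lem:uniform_homomorphism}. Your pointwise orbit--stabilizer identity would shortcut both steps.
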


The action of any group $G\leq S_n$ on $[n]$ naturally extends to an action on distinct ordered $k$-tuples $\{(i_1,\ldots,i_k) \in [n]^k \mid i_1,\ldots,i_k \text{ are distinct}\}$, represented by the permutation groups $\mydef{G^{(k)}} \leq S_{k!{{n}\choose{k}}}$. Note that $G \cong G^{(k)}$ are isomorphic as abstract groups for all $k$, but not as permutation groups. The group $G$ is called \mydef{$k$-transitive} if $G^{(k)}$ is transitive. Since $G^{(k)}$ is a permutation group, one may apply Burnside's lemma to $G^{(k)}$ to characterize $k$-transitivity of $G$.
\begin{corollary}
Fix $G \leq S_n$. Then $G$ is $k$-transitive if and only if
\[
\frac{1}{|G^{(k)}|}\sum_{g \in G^{(k)}}\textrm{Fix}(g) = 1.
\]
\end{corollary}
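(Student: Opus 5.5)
The plan is to apply Burnside's Lemma (\autoref{lem:Burnside}) directly to the permutation group $G^{(k)}$. Its average number of fixed points equals its number of orbits on the set $\Omega_k$ of distinct ordered $k$-tuples. By definition, $G$ is $k$-transitive exactly when $G^{(k)}$ is transitive, that is, when $G^{(k)}$ has exactly one orbit on $\Omega_k$. So the main work is to confirm that Burnside's Lemma, stated for subgroups of $S_n$ acting on $[n]$, applies verbatim to $G^{(k)}\leq S_{k!\binom{n}{k}}$ acting on $\Omega_k$, with $\Omega_k$ identified with $[k!\binom{n}{k}]$.

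The first step is to fix a bijection $\Omega_k \to [k!\binom{n}{k}]$ and note that the action $g\cdot(i_1,\ldots,i_k)=(g(i_1),\ldots,g(i_k))$ sends distinct tuples to distinct tuples. Hence each $g\in G$ induces a permutation $g^{(k)}$ of $\Omega_k$, and $g\mapsto g^{(k)}$ is a homomorphism whose image is $G^{(k)}$. The second step is to apply \autoref{lem:Burnside} to $G^{(k)}$. If $m_k$ denotes its number of orbits on $\Omega_k$, this gives $\frac{1}{|G^{(k)}|}\sum_{h\in G^{(k)}}\textrm{Fix}(h)=m_k$. The third step is to conclude: the displayed quantity equals $1$ if and only if $m_k=1$, which holds if and only if $G^{(k)}$ is transitive, which is the definition of $G$ being $k$-transitive.

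There is no real obstacle. The only subtle point is the well-definedness of $G^{(k)}$ as a permutation group, namely that the action is faithful on the tuple set so that the paper's claim $G\cong G^{(k)}$ holds. For $k\le n$ this holds: if $g^{(k)}$ is the identity, then $g$ fixes every coordinate of every distinct $k$-tuple, hence fixes every point of $[n]$. Even without faithfulness the corollary is unaffected, because it is phrased in terms of $G^{(k)}$ itself. I would also note the degenerate case $k>n$, where $\Omega_k=\emptyset$; I would exclude it by assuming $1\le k\le n$, which is the intended range.
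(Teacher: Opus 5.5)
Your proposal is correct and follows the same route as the paper: the paper simply applies Burnside's lemma (\autoref{lem:Burnside}) to the permutation group $G^{(k)}$, so the average number of fixed points equals the number of orbits on distinct ordered $k$-tuples, and this is $1$ exactly when $G^{(k)}$ is transitive. Your remarks on identifying the tuple set with $[k!\binom{n}{k}]$, on faithfulness, and on restricting to $1\le k\le n$ make explicit details that the paper leaves implicit.
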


\subsection{Blocks and Primitivity}
A transitive group is called \mydef{imprimitive} if it preserves a  partition $\mydef{\mathcal B} = \{\mydef{B_1}, \cdots,  \mydef{B_k}\}$ of $[n]$ which is non-trivial ($k$ is neither $1$ nor $n$). That is, for every $\sigma$ in the group, $\sigma(B_i)=B_j$ for some $j$. Non-imprimitive transitive groups are called \mydef{primitive}. Any non-trivial partition $\mathcal B$ preserved by $G$ is called a \mydef{block structure} of $G$. The parts of $\mathcal B$ are called \mydef{blocks}. Block structures of $G$ which are minimal with respect to refinement are \mydef{minimal block structures} of $G$. 

A transitive group acts transitively on the set of blocks comprising any of its block systems. In particular, all blocks of a particular block structure of a transitive group have the same size.  If $\mathcal B=\{B_1,\ldots,B_k\}$ is a block structure for a transitive subgroup of $S_n$, we write  $\mydef{\textrm{Wr}(\mathcal B)} \cong S_{\frac{n}{k}} \wr S_{k}$ for the \mydef{wreath product} of permutations which preserve the partition $\mathcal B$.  The order of $\textrm{Wr}(\mathcal B)$ is  $(\frac{n}{k}!)^{k}  k!$.  In analogy with Young subgroups being the subgroups of permutations respecting an orbit partition, wreath products are the subgroups of permutations respecting a block structure. Note that $2$-transitivity implies primitivity, but the converse is false (see 
\autoref{ex:primitive_not_two_transitive}).
The following proposition is an exercise (e.g. \cite[Exercise 5.2.8]{DixonPermutationGroups}).
\begin{proposition}
\label{prop:maximals}
Each maximal proper subgroup of $S_n$ other than $A_n$ is one of the following forms
\begin{itemize}
\item (Intransitive) A maximal  Young subgroup $S_{k} \times S_{n-k}$ for some $1 \leq k \leq n$. 
\item (Imprimitive) A maximal   wreath product $S_{a} \wr S_b$ with $a\cdot b = n$ and $a,b \neq 1$.
\item (Primitive) A proper maximal primitive group.
\end{itemize}
\end{proposition}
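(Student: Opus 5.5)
The plan is a trichotomy on a maximal proper subgroup $M<S_n$ with $M\neq A_n$: either $M$ is intransitive, or it is transitive but imprimitive, or it is primitive. In each case I would use maximality to pin down $M$ by trapping it inside a natural overgroup.

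\emph{Intransitive case.} Let $\Delta_1$ be one orbit of $M$ and $\Omega=[n]\setminus\Delta_1$, which is nonempty because $M$ is intransitive. Since $M$ preserves $\Delta_1$, it also preserves $\Omega$, and so $M\le S_{\Delta_1}\times S_{\Omega}\cong S_k\times S_{n-k}$ with $1\le k\le n-1$. This Young subgroup is proper, because it does not contain a transposition swapping a point of $\Delta_1$ with a point of $\Omega$. Maximality of $M$ then forces $M=S_k\times S_{n-k}$, which is also maximal since $M$ is. The statement's range $1\le k\le n$ includes degenerate values; only $1\le k\le n-1$ actually occurs, which is harmless.

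\emph{Imprimitive case.} Here $M$ is transitive and preserves a nontrivial block structure $\mathcal B=\{B_1,\dots,B_b\}$. As recalled in the paper, all blocks have a common size $a$, so $ab=n$, and nontriviality gives $a,b\neq 1$. By definition of $\textrm{Wr}(\mathcal B)$, we have $M\le \textrm{Wr}(\mathcal B)\cong S_a\wr S_b$. This wreath product is proper: a transposition swapping two points of $B_1$ with exactly one of them moved into $B_2$ does not preserve $\mathcal B$, for example $(x\,y)$ with $x\in B_1$, $y\in B_2$, when $a\ge 2$. Maximality then gives $M=\textrm{Wr}(\mathcal B)$, which is maximal.

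\emph{Primitive case.} If $M$ is transitive and preserves no nontrivial partition, then $M$ is by definition a primitive proper subgroup. It is maximal by hypothesis, and $M\neq A_n$ by assumption. So $M$ falls under the third item with no further work.

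The only genuine obstacle is the properness checks, which are exactly what lets maximality force equality in the first two cases. Since $M\neq A_n$ is given, nothing else is needed. The proposition only asserts that each maximal subgroup has one of these forms; it does not claim that every $S_k\times S_{n-k}$ or $S_a\wr S_b$ is maximal. That converse, which requires $k\neq n-k$, would be much harder, and I would not attempt it.
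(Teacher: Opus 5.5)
Your argument is correct, and it is the standard solution to the exercise the paper cites in lieu of a proof \cite[Exercise 5.2.8]{DixonPermutationGroups}: split by intransitive, imprimitive, or primitive; trap $M$ in $S_{\Delta_1}\times S_{[n]\setminus\Delta_1}$ or in $\textrm{Wr}(\mathcal B)$; check that this overgroup is proper; and let maximality force equality. The only blemish is the garbled wording of the wreath-product properness check, although your example $(x\,y)$ with $x\in B_1$, $y\in B_2$ does work: since $a\ge 2$ there is some $z\in B_1\setminus\{x\}$, so $(x\,y)(B_1)$ meets both $B_1$ and $B_2$ and is therefore not a block.
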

\noindent The O'nan-Scott theorem   gives a more detailed taxonomy of maximal primitive groups \cite[Ch. 4.8]{DixonPermutationGroups}.

\subsection{Jordan, giant, and primitive permutations}
The orders of maximal non-alternating subgroups of $S_n$ are significantly bounded  (see \autoref{lem:Saxl_and_friends}). This fact affords the groups $A_n$ and $S_n$ the title of \mydef{giant} permutation groups, terminology first used in   \cite{BabaiLuks}. Given  $\sigma \in S_n$, we say:
\begin{itemize}
\item $\sigma$ is \mydef{Jordan} if the only primitive groups containing it are giants.
\item  $\sigma $ is \mydef{giant} if the only transitive groups containing it are giants \\ \quad \quad \quad \quad {\color{white}{.......}}Also called \emph{strongly primitive} (e.g. in \cite{Arajuo}) in the literature.
\item $\sigma$ is \mydef{primitive} if the only transitive groups containing it are primitive. 
\end{itemize} 
Permutations which are not Jordan (resp. giant, primitive) are called \mydef{non-Jordan} (resp. \mydef{non-giant} \mydef{imprimitive}).
\begin{example}

There are $22$ cycle types in $S_8$. \autoref{fig:S8CycleTypes} shows the twelve which are Jordan, giant, or primitive, and  \autoref{tab:jordan_giant_primitive} gives the first few proportions of Jordan, giant, and primitive permutations. 
The proportion of elements which are Jordan is $1.0$ whenever there exist no primitive groups in $S_n$. We remark that this property is the rule, and not the exception: for most $n$, there are no primitive groups of order $n$ (see \cite{CameronNeumannTeague,AffPrimitive}). Similarly, the proportion of elements which are primitive is $1.0$ whenever there are no imprimitive groups of order $n$, which is the case exactly when $n$ is prime.

\begin{table}[!htpb]
$\begin{array}{l}
\begin{tabular}{|c|*{12}{p{2.2em}|}}
\hline
$n$ & 1 & 2 & 3 & 4 & 5 & 6 & 7 & 8 & 9 & 10 & 11 & 12 \\
\hline
\textrm{Jordan} & 1.00 & 1.00 & 1.00 & 1.00 & 0.417 & 0.368 & 0.468 & 0.426 & 0.501 & 0.660 & 0.690 & 0.602  \\
\hline
\textrm{Giant} & 1.00 & 1.00 & 1.00 & 0.333     & 0.417 & 0.000 & 0.468 & 0.200 & 0.315 & 0.254 & 0.690 & 0.168  \\
\hline
\textrm{Primitive} & 1.00 & 1.00 & 1.00 &0.333   & 1.00  & 0.200 & 1.00  & 0.343 & 0.543 & 0.316 & 1.00  & 0.259  \\
\hline
\end{tabular}  \\ \\ 
\begin{tabular}{|c|*{12}{p{2.2em}|}}
\hline
$n$ & 13 & 14 & 15 & 16 & 17 & 18 & 19 & 20 & 21 & 22 & 23 & 24 \\
\hline
\textrm{Jordan} & 0.773 & 0.791 & 0.910 & 0.853 & 0.810 & 0.843 & 0.885 & 0.861 & 0.920 & 0.977 & 0.896 & 0.858 \\
\hline
\textrm{Giant} & 0.773 & 0.313 & 0.467 & 0.325 & 0.810 & 0.247 & 0.885 & 0.318 & 0.567 & 0.467 & 0.896 & 0.276 \\
\hline
\textrm{Primitive} & 1.00  & 0.390 & 0.477 & 0.393 & 1.00  & 0.306 & 1.00  & 0.370 & 0.585 & 0.478 & 1.00  & 0.319 \\
\hline
\end{tabular} \\ \\
\begin{tabular}{|c|*{12}{p{2.2em}|}}
\hline
$n$ & 25 & 26 & 27 & 28 & 29 & 30 & 31 & 32 & 33 & 34\\
\hline
\textrm{Jordan} & 0.920 & 0.933 & 0.943 & 0.938 & 0.927  & 0.911 & 0.921 & 0.889 & 0.953 & 1.00 \\
\hline
\textrm{Giant} & 0.745 & 0.506 & 0.615 & 0.426 & 0.927  & 0.310 & 0.921 & 0.458 & 0.656 & 0.552\\
\hline
\textrm{Primitive} & 0.791 & 0.507 & 0.657 & 0.426 & 1.00  &  0.345 & 1.00 & 0.490 &0.672 & 0.552 \\
\hline
\end{tabular}
\end{array}$
\caption{The proportion of permutations which are Jordan, giant, and primitive respectively. As $n \to \infty$ each proportion approaches $1$.}
\label{tab:jordan_giant_primitive}
\end{table}

\begin{remark}[Computing Jordan, Giant, and Primitive proportions]
The observation of \autoref{prop:maximals} along with the database of primitive groups in GAP \cite{GAP} allows us to compute the numbers of \autoref{tab:jordan_giant_primitive} exactly. To compute the primitive proportion, we count the proportion of permutations which are imprimitive by collecting all cycle types in all maximal wreath products and summing the sizes of the corresponding conjugacy classes. To calculate the Jordan proportion, we collect the cycle types in primitive groups and sum their conjugacy class sizes to obtain the proportion of non-Jordan permutations. The non-giants are those which are not Jordan or are  imprimitive, and so the giant proportion is computed as a byproduct of the earlier computations.
\end{remark}

\begin{figure}[!htpb]
\includegraphics[scale=0.34]{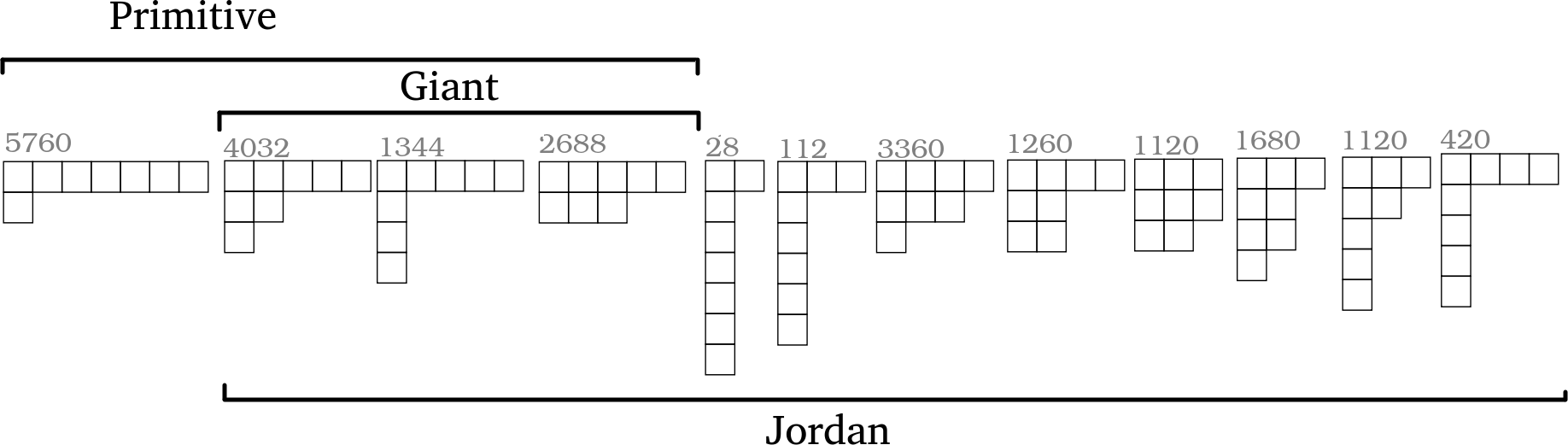}
\caption{Young diagrams of twelve of the $22$ cycle types of elements of $S_8$ categorized by being giant, Jordan, or primitive, and decorated with the number of permutations of that cycle type. Other cycle types are imprimitive and non-Jordan.}
\label{fig:S8CycleTypes}
\end{figure}
\end{example}

Further justifying the use of the terminology \textit{giant}, Luczak and Pyber \cite{LuczakPyber} proved that the proportion of $S_n$ comprised of transitive permutation groups (i.e. non-giant permutations) is vanishingly small as $n \to \infty$ (see \autoref{prop:luczakpyber}). 

\subsection{Random permutations}
In probabilistic group theory, one studies questions about random permutations in a permutation group, often with respect to the uniform distribution on $S_n$. Fundamentally combinatorial, these questions often rely on enumerative results. Conversely, enumerative results are easily translated into statements about probabilities.
\begin{lemma}
\label{lem:Saxl_and_friends}
If $G$ is a non-giant, then its order is bounded as follows.
\begin{itemize}
\item (Intransitive) The largest proper Young subgroup in $S_n$ is $S_1 \times S_{n-1}$ which has order $(n-1)!$ and index $n$. This is the smallest index of a non-giant in $S_n$

\item (Imprimitive) The largest proper wreath product in $S_{2d}$ is $S_d \wr S_2$ which has order $2d!^2$ and index $2{{2d}\choose{d}}^{-1}$ attaining  the smallest index among transitive non-giants in $S_{2d}$. This bound on the largest proper wreath product holds for $n=2d+1$ and $d>4$ as well.

\item (Primitive) Any primitive non-giant in $S_n$ has order at most $4^n$ \cite{Praeger}.
\end{itemize}

\end{lemma}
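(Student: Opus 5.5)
We prove the three items of \autoref{lem:Saxl_and_friends} separately. The starting point is \autoref{prop:maximals}: every non-giant $G \leq S_n$ lies in a maximal subgroup of $S_n$ other than $A_n$. Such a subgroup is intransitive (a Young subgroup), imprimitive (a wreath product), or primitive. So it suffices to bound the orders of the largest members of each family.

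\emph{Intransitive case.} An intransitive group has at least two orbits, so it lies in some $S_k \times S_{n-k}$ with $1 \leq k \leq n-1$. The order of this group is $k!(n-k)! = n!/\binom{n}{k}$. Since $\binom{n}{k} \geq n$ for $1 \leq k \leq n-1$, with equality exactly when $k \in \{1,n-1\}$, the maximum order is $(n-1)!$ and the minimum index is $n$.

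To see that no non-giant has smaller index, we compare this with the other two families. For the imprimitive and primitive families we need their indices to be at least $n$. For wreath products this follows from the comparison below. For primitive non-giants we cite the classical bound that the index is at least $\binom{n}{\lfloor n/2\rfloor}$; alternatively, for large $n$ we use $4^n \leq (n-1)!$, and check small $n$ directly.

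\emph{Imprimitive case.} A transitive imprimitive group lies in some $S_a \wr S_b$ with $ab=n$, $a,b \geq 2$, of order $(a!)^b\, b!$. We compare this with $S_d \wr S_2$, i.e.\ $2(d!)^2$, when $n=2d$. The plan is to show that, for fixed $n$, the function $f(a)=(a!)^{n/a}(n/a)!$ over divisors $a$ is maximized at $a=n/2$. The argument goes as follows:
\begin{itemize}
\item Merging blocks increases the order. Writing $b=2c$, the group $S_a \wr S_b$ embeds in $S_{2a} \wr S_c$, since grouping pairs of blocks gives a coarser invariant partition. So among nested block systems, the coarsest is largest.
\item For non-nested divisors we need a direct comparison. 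Using $\binom{2d}{d} \leq 4^d$ and Stirling-type estimates, one checks that $(a!)^b b! \leq 2(d!)^2$ whenever $a \leq d/2$.
\end{itemize}
The index is $\frac{(2d)!}{2(d!)^2} = \frac{1}{2}\binom{2d}{d}$. (The statement writes this as $2\binom{2d}{d}^{-1}$, which is the reciprocal of the index; we read it as this quantity.) Comparing with the intransitive bound shows this is the smallest index among transitive non-giants, given the primitive bound.

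For $n=2d+1$ with $d>4$, the wreath products have $a,b$ odd divisors of $n$, so $a \leq n/3$. We then show $(a!)^b b! \leq 2(d!)^2$ by a crude estimate such as $(a!)^b \leq (n/3)!^{3}\cdot$(correction), checked against $d!^2$. This works because $d! \approx ((n-1)/2)!$ grows much faster.

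\emph{Primitive case.} This is exactly Praeger and Saxl's theorem \cite{Praeger}: a primitive group not containing $A_n$ has order at most $4^n$. We cite it rather than reprove it.

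The main obstacle is the uniform comparison of wreath product orders over all factorizations $n=ab$, especially the odd case $n=2d+1$. There the comparison is with a group that is not itself a wreath product in $S_n$, so one needs an explicit inequality valid for $d>4$. We handle it by bounding $\log((a!)^b b!) \leq n\log a + b\log b$ against $2\log d! \geq 2d\log(d/e)$. This holds for $a \leq n/3$ once $d$ is moderately large, and we verify the remaining finitely many small cases by direct computation.
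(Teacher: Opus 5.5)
Your outline is the paper's own: its proof is a one-line appeal to the intransitive/imprimitive/primitive trichotomy, the orders of Young subgroups and wreath products, and Praeger--Saxl. You are also right that $2\binom{2d}{d}^{-1}$ is the reciprocal of the index. Your opening sentence is too strong, since $\mathrm{PSL}(3,2)<A_7$ lies in no maximal subgroup of $S_7$ other than $A_7$. You never use it, though, because you split on whether $G$ itself is intransitive, imprimitive or primitive.

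The real problems are in the estimates you add for the imprimitive case.
\begin{itemize}
\item \emph{The merging step is false.} $S_a\wr S_{2c}$ is not contained in $S_{2a}\wr S_c$, because the stabilizer of the finer partition permutes its $2c$ blocks arbitrarily and so preserves no fixed pairing of them. In $S_8$, $(3\,5)(4\,6)$ preserves $\{\{1,2\},\{3,4\},\{5,6\},\{7,8\}\}$ but sends $\{1,2,3,4\}$ to $\{1,2,5,6\}$. Even the order inequality fails: in $S_{16}$, $|S_2\wr S_8|=10{,}321{,}920>7{,}962{,}624=|S_4\wr S_4|$.
\item \emph{The case $b=3$ is missing.} Without merging, every $b\ge 3$ needs a direct comparison. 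Yours is stated only for $a\le d/2$, i.e.\ $b\ge 4$.
\item \emph{The odd-case estimate cannot work.} For $b=3$, $a=n/3$, the inequality $n\log a+b\log b\le 2d\log(d/e)$ reads roughly $n\log\frac n3\le n\log\frac{n}{2e}$. This fails for every odd $n$ divisible by $3$; for $n=33$ the two sides are $82.4$ and $56.7$. The same happens for $b=5$, because $2e>5$. So there are infinitely many failures, not finitely many small cases.
\end{itemize}
The cause of the last failure is that $a!\le a^a$ throws away the factor $e^{-a}$ that your lower bound on $d!$ keeps. Using $a!\le e\sqrt a\,(a/e)^a$ instead restores a margin of about $n\log\frac32$ when $b=3$.

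On the primitive side, the ``classical bound'' $[S_n:G]\ge\binom{n}{\lfloor n/2\rfloor}$ is false for $5\le n\le 8$. The groups $\mathrm{AGL}(1,5)$, $\mathrm{PGL}(2,5)$, $\mathrm{PSL}(3,2)$, $\mathrm{AGL}(3,2)$ have indices $6,6,30,30$ in $S_5,S_6,S_7,S_8$. Bochert's classical bound is only $\lfloor (n+1)/2\rfloor!$. Your fallback (Praeger--Saxl plus a finite check) does give index $\ge n$ for $n\ge 5$.

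However, ``given the primitive bound'' does not yield the minimal-index claim of the second item. The inequality $4^n<2(d!)^2$ holds only for $n\ge 18$. The finite check for small even $n$ then finds genuine counterexamples:
\begin{itemize}
\item $\mathrm{PGL}(2,5)\le S_6$ has order $120>72=|S_3\wr S_2|$;
\item $\mathrm{AGL}(3,2)\le S_8$ has order $1344>1152=|S_4\wr S_2|$.
\end{itemize}
Likewise, the last sentence of the first item fails at $n=4$, where $S_2\wr S_2\cong D_8$ has index $3<4$; your own formula $\tfrac12\binom{2d}{d}$ gives $3$ there.

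The paper's one-line proof is silent on these exceptions too. What your approach can actually prove is the first item for $n\ge 5$ and the minimal-index claim of the second item for even $n\notin\{6,8\}$. Your write-up should state those restrictions rather than assert the bounds for all $n$.
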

\begin{proof}
This result follows from the trichotomy of \autoref{prop:maximals}, the orders of Young subgroups and natural wreath products, and a non-trivial result by Saxl and Praeger \cite{Praeger}.
\end{proof}
\begin{lemma}
\label{lem:prop_orbit_wreath_respect}
The probability that $\sigma \in S_n$ is in the Young subgroup $S_{\Delta} \cong S_{n_1} \times \cdots \times S_{n_m}$ is 
\[
\Pr(\sigma \in S_\Delta) = [S_n:S_{\Delta}]^{-1}  = \frac{|S_\Delta|}{|S_n|} = \frac{n_1! \cdot n_2! \cdots n_m!}{n!} = {{n}\choose{n_1,n_2,\ldots,n_m}}^{-1} \leq \frac{1}{n}
\]
The probability that  $\sigma$ belongs to the wreath product $\textrm{Wr}(\mathcal B)$ for $\mathcal B = \{B_1,\ldots,B_{k}\}$ is 
\[
\Pr(\sigma \in \textrm{Wr}(\mathcal B)) = [S_n:\textrm{Wr}(\mathcal B)]^{-1} =  \frac{|\textrm{Wr}(\mathcal B)|}{|S_n|} = \frac{\left(\frac{n}{k}!\right)^{k}k!}{n!} \leq 2\left(\frac{1}{\sqrt{2}}\right)^n.
\]
The maxima are achieved by the groups $S_1 \times S_{n-1}$ and $S_{n/2} \wr S_2$, when $n$ is even, respectively.
\end{lemma}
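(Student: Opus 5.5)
The plan is to prove the two equalities directly, since they are just Lagrange's theorem for the uniform distribution. A uniform $\sigma\in S_n$ lies in a subgroup $H$ with probability $|H|/|S_n| = [S_n:H]^{-1}$. We then plug in $|S_\Delta| = n_1!\cdots n_m!$ and $|\textrm{Wr}(\mathcal B)| = (\frac nk!)^k k!$, both recorded earlier. This gives the multinomial expressions.

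For the Young subgroup bound, I would show that the multinomial coefficient $\binom{n}{n_1,\ldots,n_m}$ is at least $n$ whenever $m\ge 2$, i.e.\ whenever the partition is proper.
\begin{itemize}
\item Merging parts only decreases the multinomial coefficient, since $\binom{n}{n_1,\ldots,n_m} = \binom{n}{n_1+n_2,n_3,\ldots}\binom{n_1+n_2}{n_1}$. So it suffices to treat $m=2$.
\item For $m=2$ we have $\binom{n}{a}\ge n$ for $1\le a\le n-1$, with equality exactly at $a\in\{1,n-1\}$.
\end{itemize}
This yields the maximum at $S_1\times S_{n-1}$, consistent with the first bullet of \autoref{lem:Saxl_and_friends}. (If $\Delta$ is trivial the bound fails, so I would read the lemma as being about proper Young subgroups.)

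For the wreath product bound, I would first reduce to blocks of size $a=n/k$ with $a,k\ge 2$. Then I would compare the ratios $(a!)^k k!/n!$ across all factorizations $n=ak$. The cleanest route is to show that $\textrm{Wr}$ with $k=2$ dominates, which is the content claimed in \autoref{lem:Saxl_and_friends}, and then bound that case. With $n=2d$, the ratio is $2(d!)^2/(2d)! = 2\binom{2d}{d}^{-1}$. Using $\binom{2d}{d}\ge 2^d = (\sqrt2)^n$ (it is the largest of $2d+1$ terms summing to $4^d$, so it is at least $4^d/(2d+1)$; more simply, induction from $\binom{2d+2}{d+1}=\binom{2d}{d}\frac{(2d+1)(2d+2)}{(d+1)^2}\ge 2\binom{2d}{d}$), we get the ratio $\le 2(1/\sqrt2)^n$.

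The main obstacle is justifying that $k=2$ is the worst case for general factorizations, including odd $n$, where $k=2$ is impossible. For that step I would try a direct estimate valid for every $a,k\ge2$: $(a!)^k k!/n! = \prod$ of inverse binomials, namely
\[
\frac{(a!)^k k!}{(ak)!} = k!\prod_{j=1}^{k}\binom{ja}{a}^{-1}.
\]
Each factor satisfies $\binom{ja}{a}\ge j\cdot 2^{a}$ for $j\ge2$ (and equals $1$ for $j=1$). Hence the product is at most $k!/(k!\,2^{a(k-1)}) = 2^{-a(k-1)}$. Since $a(k-1)\ge n/2$ when $k\ge2$, this is at most $(1/\sqrt2)^n \le 2(1/\sqrt2)^n$. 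The inequality $\binom{ja}{a}\ge j2^a$ is where care is needed. I would verify it via $\binom{ja}{a}=\prod_{i=1}^{a}\frac{(j-1)a+i}{i}$: the $i=a$ factor equals $j$, and each remaining factor is at least $2$ when $j\ge 2$ except possibly near $i=a$. If that is too lossy, I would fall back to checking the bound with a constant weaker than $j$, which still suffices after absorbing into the factor $2$. Equality cases then identify $S_{n/2}\wr S_2$ as the maximizer.
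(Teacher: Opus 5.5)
You have a concrete error in the wreath-product estimate, which is easy to repair, and a missing step for the maximizer claim.

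Your Young-subgroup argument is correct: merging parts can only shrink the multinomial coefficient, and then $\binom{n}{a}\ge n$ for $1\le a\le n-1$. It actually proves what the paper's proof merely asserts, namely that $S_1\times S_{n-1}$ has the smallest index.

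For the wreath product, your direct estimate over all factorizations $n=ak$ differs from the paper's route. The paper asserts that $S_{n/2}\wr S_2$ has the smallest index and then bounds $2\binom{n}{n/2}^{-1}$, which only addresses even $n$ directly. As written, however, your key inequality is false.
\begin{itemize}
\item $\binom{ja}{a}\ge j\,2^a$ fails at $j=a=2$, since $\binom{4}{2}=6<8$.
\item Accordingly, your intermediate bound $(1/\sqrt2)^n$ is false. For $n=4$ and $\mathcal B=\{\{1,2\},\{3,4\}\}$ the probability is $8/24=1/3>1/4$.
\end{itemize}
Your own verification sketch shows what is actually true. In $\binom{ja}{a}=\prod_{i=1}^{a}\frac{(j-1)a+i}{i}$, the $i=a$ factor is $j$ and the other $a-1$ factors are each at least $2$. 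Hence $\binom{ja}{a}\ge j\,2^{a-1}$ for $j\ge 2$. Feeding this into your product formula gives
\[
\frac{(a!)^k\,k!}{n!}\le 2^{-(a-1)(k-1)},\qquad (a-1)(k-1)=\frac n2-1+\frac{(a-2)(k-2)}{2}\ge \frac n2-1,
\]
which is exactly the bound $2(1/\sqrt2)^n$.

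Note that you lose a factor $2^{k-1}$, not a single $2$. When $k\ge 3$, it is the slack $(a-2)(k-2)/2$ that pays for this, not the leading constant. Alternatively, $\binom{ja}{a}\ge j\,2^a$ holds for all $j,a\ge 2$ except $(j,a)=(2,2)$, where the loss is only $4/3<2$. So your fallback also works if you restrict the weakening to that one case. With either repair, your route proves the inequality for every nontrivial block system, odd composite $n$ included, without needing to know the maximizer.

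What your route does not deliver is the final sentence of the lemma. Your estimate is far from sharp: for $k=2$ it gives $2^{1-a}$, whereas the true value is $2\binom{2a}{a}^{-1}$. So there are no ``equality cases'' from which to read off that $S_{n/2}\wr S_2$ is the maximizer. Establishing that needs a separate comparison,
\[
(a!)^k\,k!\le 2\bigl((n/2)!\bigr)^2 \quad\text{for every factorization } n=ak \text{ with } a\ge 2,\ k\ge 3,\ n \text{ even}.
\]
This is precisely the obstacle you identified and then bypassed. The paper does not prove this comparison either; it takes it as known.
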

\begin{proof}
In each result, the equalities are obtained through basic counting. The first inequality holds since $S_1 \times S_{n-1}$ is smallest index Young subgroup of $S_n$. The second inequality holds since $S_{\frac n 2} \wr S_2$ is the smallest index wreath product in $S_{n}$, and so we bound
\[
\Pr(\sigma \in \textrm{Wr}(\mathcal B)) = [S_{n}:S_{n/2} \wr S_2]^{-1} = \frac{(n/2)!^2 \cdot 2}{n!} = 2 {{n}\choose{n/2}}^{-1}
\] which is bounded above by $2\sqrt{2}^{-n}$ since  ${{n}\choose{n/2}}$ is bounded below by $(\sqrt{2})^n$.
\end{proof}

The celebrated result of Luczak and Pyber states that the proportion of giant permutations approaches $1$ as $n \to \infty$, contrary to what one may expect from the first few values of the giant proportions in \autoref{tab:jordan_giant_primitive}. 
In other words, for large $n$, the union of all transitive non-giant permutation groups comprises a vanishingly small proportion of $S_n$. Explicit bounds are obtained in \cite{Eberhard}.
\begin{proposition}[Luczak Pyber \cite{LuczakPyber}]
\label{prop:luczakpyber}If $\sigma$ is a random permutation in $S_n$, then
\[
\lim_{n \to \infty} \Pr(\sigma \text{ is a giant}) = 1.
\]
\end{proposition}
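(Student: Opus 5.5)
We plan to reduce the statement to showing that the proportion of non-giant permutations tends to $0$. A permutation $\sigma$ is non-giant exactly when it lies in some transitive non-giant subgroup of $S_n$. Every such subgroup sits inside a maximal transitive non-giant subgroup. By \autoref{prop:maximals}, any maximal transitive subgroup other than $A_n$ is either a maximal wreath product $S_a \wr S_b$ with $ab=n$ and $a,b\neq 1$, or a proper maximal primitive group. A primitive group not containing $A_n$ is contained in a maximal one of the same type, and a group inside $A_n$ that is transitive and non-giant lies in a maximal subgroup of $A_n$, which we handle in the same way. The union bound then gives
\[
\Pr(\sigma \text{ non-giant}) \le \Pr\bigl(\sigma \in \textstyle\bigcup_{\mathcal B}\textrm{Wr}(\mathcal B)\bigr) + \Pr\bigl(\sigma \in \textstyle\bigcup_{H \text{ primitive non-giant}} H\bigr),
\]
and we will show that each term tends to $0$.

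\textbf{Imprimitive term.} We must not bound this by summing over all block systems, since there are $n!/((a!)^b b!)$ of them, far too many. Instead we argue through cycle structure. If $\sigma$ preserves a partition into $b$ blocks of size $a$, then $\sigma$ permutes the blocks. So for some $d \mid b$, a collection of blocks is cycled, and the cycles of $\sigma$ on those $da$ points all have lengths divisible by $d$ and together form an invariant set of size divisible by $a$. Hence $\sigma$ has a proper nonempty invariant set $T$ (a union of cycles) whose size is a multiple of $a$, or has all cycle lengths divisible by some $d>1$. We then use the classical results on the cycle structure of random permutations: with high probability, the sums of cycle lengths avoid most residues, and the set of invariant-set sizes is sparse (Łuczak–Pyber, Eberhard–Ford–Green type estimates). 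Concretely, we aim to bound the probability that some $a$ with $1<a<n$, $a\mid n$, divides the size of some invariant union of cycles of a nontrivial form by $o(1)$. The plan is to show that, with probability $1-o(1)$, a random permutation has at most $O(\log n)$ cycles, and that the small cycles are few, so the possible invariant subset sizes are constrained.

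\textbf{Primitive term.} For the primitive term we count. The number of primitive non-giant groups up to conjugacy is at most $n^{c\log n}$. Each has order at most $4^n$ by \autoref{lem:Saxl_and_friends}, and hence at most $n!$ conjugates. A direct union bound over conjugates is useless. Instead we use that a permutation in a primitive non-giant group has restricted cycle type: by Jordan-type results, any element with a cycle of prime length $p<n-2$ forces a giant. A random permutation has, with probability tending to $1$, a cycle of prime length $p$ with $\log^2 n < p < n/2$ or similar, and hence is Jordan. This disposes of the primitive term.

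\textbf{Main obstacle.} The main obstacle is the imprimitive term. It is the genuinely hard content of Łuczak–Pyber, because it requires showing that a random permutation almost surely has no invariant set whose size is a nontrivial divisor-compatible value. Our first attempt will be the fixed-set-size estimate: the probability that a random permutation fixes some set of size $k$ is $O(k^{-c})$ for $k\le n/2$. We will sum this over $k$ that are multiples of the divisors $a$ of $n$, and handle small $a$ separately using the count of short cycles.
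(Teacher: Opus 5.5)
The paper does not prove this statement; it quotes Luczak and Pyber, so your sketch has to stand on its own. Splitting into an imprimitive and a primitive term is the standard approach. The imprimitive term, however, has a genuine gap: the necessary condition you extract is not a rare event unless the number $b=n/a$ of blocks is tiny. That condition is a proper invariant union of cycles of size divisible by $a$, or all cycle lengths divisible by some $d>1$. For $n$ even and $a=2$, every permutation with a cycle of even length $<n$, or with at least three cycles, has a proper invariant set of even size. This happens with probability $1-o(1)$, so no estimate of your event can give $o(1)$.

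Correspondingly, summing $\Pr(\sigma\text{ fixes a }k\text{-set})=O(k^{-c})$ over multiples $k$ of $a$ diverges. The optimal exponent is $\delta=1-\frac{1+\log\log 2}{\log 2}\approx 0.086$ (Eberhard--Ford--Green), so for bounded $a$ the sum grows like $n^{1-\delta}$ up to logarithms. The union bound is therefore useless for every $a$ below about $n^{1-\delta}$, not only for small $a$. Knowing that $\sigma$ has $O(\log n)$ cycles does not rescue this, since that still permits polynomially many invariant-set sizes. The fixed-set estimate does settle the few-blocks case. If $b\le n^{\eta}$ with $\eta$ small compared with $\delta$, there are fewer than $n^{2\eta}$ candidate sizes $ja$, each with $\min(ja,n-ja)\ge n/b\ge n^{1-\eta}$. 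The alternative that the blocks are permuted as a $b$-cycle (all cycle lengths divisible by $b$) is also rare.

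For many blocks you must keep the finer structure a block system imposes on each cycle. If $\sigma$ preserves blocks of size $a$ and $C$ is a cycle of $\sigma$, then $C$ lies in the union $U$ of $d$ blocks permuted cyclically by $\sigma$, where $d\mid |C|$, $|C|/d\le a$, $|U|=da$, and every cycle of $\sigma$ inside $U$ has length divisible by $d$. Put $D=\log^3 n$. With probability $1-o(1)$ the following three things hold:
\begin{itemize}
\item No two distinct cycles have lengths with a common divisor $d>D$; the expected number of such pairs is at most $\sum_{d>D}(1+\log n)^2/d^2\le (1+\log n)^2/D$.
\item The two longest cycles have length at least $n/\log n$.
\item Some cycle has prime length $p\in(D,\sqrt n)$; its absence has probability about $\log D/\log\sqrt n$.
\end{itemize}
If $D<a\le n^{1-\eta}$, each of the two longest cycles has length $>Da$, hence $d>D$, hence it fills its $U$ alone. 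Then $a>D$ divides both lengths, a contradiction. If $2\le a\le D$, the $p$-cycle forces $d=p$ and $|U|=pa>p$, so another cycle has length divisible by $p>D$, again a contradiction.

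The primitive term needs a smaller repair. A cycle of prime length $p\le n-3$ does not by itself force a giant: $\mathrm{AGL}_3(2)\le S_8$ is primitive and contains elements of cycle type $(3,3,1,1)$. You must also require that no other cycle length is divisible by $p$, so that a power of $\sigma$ is a $p$-cycle and Jordan's theorem applies. The probability that some prime $p>\log^2 n$ has both a $p$-cycle and another cycle of length divisible by $p$ is $O\bigl(\log n\sum_{p>\log^2 n}p^{-2}\bigr)=o(1)$, so with this condition added your estimate goes through.
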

\noindent Dixon's conjecture, proven by Babai \cite{Babai}, says that two random permutations likely generate~$S_n$. 
\begin{proposition}[Dixon's conjecture \cite{Babai}] \label{prop:dixon}
If $\sigma_1,\sigma_2$ are random permutations in $S_n$, then
\[
\Pr(\langle \sigma_1,\sigma_2 \rangle \text{ is a giant}) = 1 - \frac{1}{n} + O(n^{-2}).
\]
\end{proposition}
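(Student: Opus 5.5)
The statement is Babai's theorem (Dixon's conjecture). I would prove it through the maximal-subgroup trichotomy of \autoref{prop:maximals}, with the union bound over maximal subgroups as the organizing tool. The pair $\langle \sigma_1,\sigma_2\rangle$ fails to be a giant exactly when both $\sigma_i$ lie in a common maximal non-giant subgroup $M<S_n$ with $M \neq A_n$. For a fixed $M$, the probability that both $\sigma_1,\sigma_2 \in M$ is $[S_n:M]^{-2}$. Summing over conjugacy classes of maximal subgroups, a class with representative $M$ contains $[S_n:N_{S_n}(M)] \le [S_n:M]$ members, so it contributes at most $[S_n:M]^{-1}$. Hence
\[
1-\Pr(\langle\sigma_1,\sigma_2\rangle \text{ giant}) \le \sum_{M} [S_n:M]^{-1},
\]
where the sum runs over representatives of conjugacy classes of maximal non-giant subgroups.

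\textbf{Intransitive terms.} For $M = S_k\times S_{n-k}$, which is self-normalizing (for $k \ne n/2$; when $k=n/2$ the group is not maximal), I would compute exactly. The term $k=1$ contributes at most $n$ subgroups, each with probability $n^{-2}$, giving $1/n$. The term $k=2$ contributes $\binom n2 \cdot \binom n2^{-2} = O(n^{-2})$. The terms $k\ge 3$ sum to $O(n^{-3})$, since $\sum_{k\ge 3}\binom nk^{-1}$ is dominated by its first term.

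For the matching lower bound, the event that $\sigma_1$ and $\sigma_2$ share a common fixed point has probability at least $n\cdot n^{-2} - \binom n2 n^{-2}(n-1)^{-2}$ by inclusion–exclusion. This already gives failure probability $\ge 1/n - O(n^{-2})$. So the main term $1/n$ is exact, and everything else must be shown to be $O(n^{-2})$.

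\textbf{Imprimitive terms.} For $M=S_a\wr S_b$, \autoref{lem:prop_orbit_wreath_respect} gives $[S_n:M]^{-1}\le 2(\sqrt2)^{-n}$. The number of divisor pairs $(a,b)$ is at most $n$, so the total contribution is $O(n 2^{-n/2}) = O(n^{-2})$.

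\textbf{Primitive terms (main obstacle).} Here I would invoke \autoref{lem:Saxl_and_friends}: every primitive non-giant has order at most $4^n$, so $[S_n:M]^{-1}\le 4^n/n!$. The difficulty is the number of conjugacy classes of maximal primitive subgroups, which must be controlled subexponentially in $n!$. I would use the crude bound that a primitive group of order at most $4^n$ is generated by $O(\log_2 4^n) = O(n)$ elements. This bounds the number of such subgroups by $(n!)^{O(n)}$, which is far too weak. So instead I would bound the number of conjugacy classes by the number of isomorphism types times the number of degree-$n$ actions, which is $\exp(O(n))$ or at worst $n^{O(\log n)}$ (as in Pyber's count via O'Nan–Scott). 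With that, the total is $\exp(O(n))\cdot 4^n/n! = O(n^{-2})$, indeed superpolynomially small.

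This counting step is the one I expect to require citing external classification-based input. Babai's original argument avoids the CFSG by using the bound $|M|\le \exp(4\sqrt n\log^2 n)$ for primitive groups not containing $A_n$, together with a cruder count of maximal subgroups.

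Combining the three contributions with the matching lower bound yields $1-\frac1n+O(n^{-2})$.
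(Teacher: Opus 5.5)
The paper gives no proof of this statement; it is quoted from Babai. Your outline is the standard union bound over conjugacy classes of maximal subgroups, which is essentially Babai's argument. Your intransitive estimate $\sum_{1\le k<n/2}\binom nk^{-1}=\frac1n+O(n^{-2})$, your imprimitive estimate $O(n2^{-n/2})$, and your matching lower bound from a common fixed point are all correct. Two steps need repair, and one remark is wrong.

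First, the reduction ``not a giant $\iff$ $\sigma_1,\sigma_2$ lie in a common maximal subgroup $M\neq A_n$ of $S_n$'' is false. For example, $\mathrm{PSL}(3,2)$ acting on the seven points of the Fano plane is a transitive non-giant subgroup of $A_7$ whose only maximal overgroup in $S_7$ is $A_7$. The other maximal subgroups $S_6$, $S_5\times S_2$, $S_4\times S_3$ and $\mathrm{AGL}(1,7)$ are intransitive or have order $42<168$. Yet two random permutations generate $\mathrm{PSL}(3,2)$ with positive probability. Your union must therefore run over subgroups maximal among those not containing $A_n$, which includes such maximal subgroups of $A_n$. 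Intransitive or imprimitive subgroups of $A_n$ already lie in your Young or wreath subgroups. So the extra terms are primitive and obey the $4^n$ bound, but they have to be included in the primitive count.

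Second, the primitive count does not follow from the mechanism you describe. Counting isomorphism types times degree-$n$ actions over groups of order at most $4^n$ gives nothing useful. There are already $2^{\Theta(n^3)}$ isomorphism types of $2$-groups of order $4^n$, far more than $n!$. The real content is the restriction to groups that actually admit a primitive action of degree $n$, and that comes from O'Nan--Scott together with the classification of finite simple groups. You should cite an explicit theorem bounding the number of conjugacy classes of these primitive subgroups by something like $e^{O(n)}$. One example is Liebeck and Shalev's theorem that $S_n$ has only $(\tfrac12+o(1))n$ conjugacy classes of maximal subgroups, together with the analogous count for $A_n$. Then $e^{O(n)}\cdot 4^n/n!=O(n^{-2})$ finishes the argument.

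Finally, your closing remark is inaccurate. Babai's proof of Dixon's conjecture does use the classification, precisely to control the primitive groups. An order bound such as $\exp(4\sqrt n\log^2 n)$ does not by itself supply the needed count of classes.
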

Effective bounds on the probability of \autoref{prop:dixon} were obtained in \cite{Maroti} and improved upon by Morgan and Roney-Dougal in \cite{Morgan}, which also covers the alternating case. 
\begin{proposition}[Morgan and Roney-Dougal \cite{Morgan}]
\label{prop:morganroneydougal}
Let $G$ be $A_n$ or $S_n$ for $n \geq 5$ Then two random permutations $\sigma_1,\sigma_2$ in $G$ generate a giant with probability
\[
\mydef{\ell(n)} = 1-\frac{1}{n}-\frac{8.8}{n^2} <
\Pr(\langle \sigma_1,\sigma_2 \rangle \text{ is a giant}) < 1-\frac{1}{n}-\frac{0.93}{n^2} = \mydef{u(n)}.
\]
\end{proposition}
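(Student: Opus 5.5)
The plan is to bound the complementary event, that $\langle \sigma_1,\sigma_2\rangle$ is \emph{not} a giant. This happens exactly when both $\sigma_i$ lie in a common maximal non-giant subgroup $M$ of $G$. Note that for $G=S_n$ the subgroup $A_n$ is excluded, since landing in $A_n$ still yields a giant. By \autoref{prop:maximals}, each such $M$ is intransitive ($G\cap(S_k\times S_{n-k})$), imprimitive ($G\cap(S_a\wr S_b)$), or primitive. For a fixed subgroup $M$ we have $\Pr(\sigma_1,\sigma_2\in M)=[G:M]^{-2}$, and intersecting with $A_n$ does not change indices when $M\not\le A_n$. So the same computation serves for $A_n$ and $S_n$.

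\textbf{Upper bound on failure (the lower bound $\ell(n)$).} I will use a union bound over all maximal non-giant subgroups, grouped by type.

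\emph{Intransitive.} For each $k\le n/2$ there are $\binom nk$ invariant $k$-sets, each of index $\binom nk$. They contribute $\sum_{1\le k\le n/2}\binom nk^{-1} = \frac1n+\frac{2}{n(n-1)}+O(n^{-3})$.

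\emph{Imprimitive.} Summing over block systems $S_a\wr S_b$, the number of conjugates is at most the index, so the contribution is at most $\sum_{ab=n}[S_n:S_a\wr S_b]^{-1}$. By \autoref{lem:prop_orbit_wreath_respect} this is $O(d(n)\,2^{-n/2})$.

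\emph{Primitive.} The number of conjugacy classes of primitive maximal subgroups is at most $n$ (a crude but known bound). Each class contributes at most $|M|/|G|\le 4^n/(n!/2)$, by the Praeger--Saxl bound in \autoref{lem:Saxl_and_friends}.

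The second and third contributions are negligible compared with $n^{-2}$ once $n$ is moderately large. This gives failure probability $\le \frac1n+\frac{c}{n^2}$ with an explicit $c$, and the goal is to get $c\le 8.8$.

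\textbf{Lower bound on failure (the upper bound $u(n)$).} Here only the intransitive contributions $k=1,2$ are needed. I will use Bonferroni: the probability of a common fixed point is at least $n\cdot\frac1{n^2}-\binom n2\frac{1}{n^2(n-1)^2}$. Then I add the probability of a common invariant $2$-set with no common fixed point, which is roughly $\binom n2^{-1}$ minus overlaps of order $O(n^{-3})$. This yields failure probability $\ge \frac1n+\frac{0.93}{n^2}$ for $n$ large, since the true second-order coefficient is about $2-\frac12=1.5$ minus lower-order losses.

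\textbf{Main obstacle.} The asymptotic expansion is straightforward. The real difficulty is making every error term \emph{explicit} and valid for all $n\ge5$. For large $n$ (say $n\ge 30$) I would bound the tails $\sum_{3\le k\le n/2}\binom nk^{-1}$ and the imprimitive and primitive terms by explicit geometric estimates. For $5\le n<30$ the asymptotic estimates are too weak, so I would compute the exact probability by direct enumeration over pairs of conjugacy classes. This can be done in GAP, for example using the Möbius function of the subgroup lattice or character-theoretic counts of generating pairs, as in \cite{Morgan}, and then checked against $\ell(n)$ and $u(n)$.
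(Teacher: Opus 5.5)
The paper gives no proof of this proposition; it is imported from \cite{Morgan}, so your sketch has to stand on its own. The architecture is natural: a union bound over maximal subgroups for $\ell(n)$, inclusion--exclusion over small invariant sets for $u(n)$, and machine computation for small $n$. The $\ell(n)$ half works asymptotically, since the union bound gives failure probability $\frac1n+\frac{2}{n(n-1)}+O(n^{-3})$, far below $\frac1n+\frac{8.8}{n^2}$.

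\textbf{The $u(n)$ half rests on a false estimate.} The overlap between ``common invariant $2$-set'' and ``common fixed point'' is not $O(n^{-3})$. Condition on both $\sigma_1,\sigma_2$ stabilizing $\{i,j\}$. Then each one independently fixes or swaps $i,j$ with probability $\tfrac12$, in $S_n$ and in $A_n$ alike. So with probability $\tfrac14$ both fix $i$ and $j$. Summed over pairs, this overlap contributes $\binom n2\cdot\frac{1}{n^2(n-1)^2}=\frac{1}{2n(n-1)}$, which is of order $n^{-2}$. Consequently:
\begin{itemize}
\item a common invariant $2$-set with no common fixed point has probability $\frac{3}{2n(n-1)}+O(n^{-3})$, not $\binom n2^{-1}$;
\item the true second-order coefficient is $-\tfrac12+\tfrac32=1$, not $1.5$, matching Dixon's expansion $1-\frac1n-\frac1{n^2}+O(n^{-3})$.
\end{itemize}
So the slack over $0.93$ is only $0.07/n^2$, and the third-order terms must be controlled explicitly. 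A correct two-term Bonferroni computation gives failure probability at least
\[
\frac1n+\frac{1}{n(n-1)}-\frac{2}{n(n-1)(n-2)}-\frac{2}{n(n-1)(n-2)(n-3)},
\]
which exceeds $\frac1n+\frac{0.93}{n^2}$ only for $n\ge 21$.

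\textbf{The small-$n$ part also needs rethinking.}
\begin{itemize}
\item The constant $8.8$ is sharp. For $G=A_6$, two random elements generate $A_6$ with probability exactly $\frac{53}{90}$, and $1-\frac16-\frac{8.8}{36}=\frac{53}{90}$. So the strict inequality $\ell(n)<\Pr$ as stated is false for $G=A_6$, and your exact check would return equality. Only $\ell(n)\le \Pr$ can be proved.
\item Exact probabilities via M\"obius functions of subgroup lattices are realistic only for the first few degrees, not for all $n<30$. In the intermediate range you need sharper but still explicit estimates. For example, Dixon's recurrence $\sum_{k=1}^n\binom{n-1}{k-1}t_k\,((n-k)!)^2=(n!)^2$, with $t_k$ the number of pairs in $S_k$ generating a transitive group, gives the intransitivity probability $1-t_n/(n!)^2$ exactly, usable in both directions (track parities for $A_n$). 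Likewise, use the actual primitive groups of degree $n$ from the library in place of the $4^n$ bound.
\end{itemize}

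\textbf{A small fix to your reduction.} Landing in $A_n$ does not yield a giant. Pairs can generate a proper subgroup of $A_n$ that lies in no maximal subgroup of $S_n$ other than $A_n$, for example $\mathrm{PSL}(3,2)<A_7$. For the same reason, the maximal subgroups of $A_n$ are not all of the form $M\cap A_n$, so ``the same computation serves for $A_n$ and $S_n$'' misses them. These subgroups are primitive, so they are absorbed into your primitive term once you also count the primitive maximal subgroups of $A_n$.
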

An important probability associated to a group $G$, and $k \in \mathbb{N}$, is the probability $\mydef{\varphi_k(G)}$ that $k$ independent random elements of $G$ generate it. Pak \cite{Pak} provides lower bounds on these values, obtained in the worst-case scenario by the regular group $\mathbb{Z}_2^M$, realized as a permutation group in $S_{2^M}$ generated by the coordinate reflections of the vertices of the cube~$[-1,1]^M$. 
\begin{lemma}[Pak]{\cite[Theorem 1.1]{Pak}}
\label{lem:Pak}
For any finite group $G$ with $M = \lceil \log_2(|G|) \rceil$ 
\[
\varphi_k(G) \geq \varphi_k(\mathbb{Z}_2^M) = \prod_{i=k-M+1}^{k} \left(1-\frac{1}{2^i}\right) > 1- \frac{8}{2^{k-M}}.
\]
Notably, $\varphi_M(G) > 1/4$ and $\varphi_{M+1}(G) > 1/2$.
\end{lemma}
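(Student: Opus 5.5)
The plan is to take the first inequality directly from \cite[Theorem 1.1]{Pak} and to verify everything else by elementary computation in $\mathbb{Z}_2^M$. If a self-contained argument for $\varphi_k(G)\geq \varphi_k(\mathbb{Z}_2^M)$ were wanted, I would use the subgroup chain $H_j=\langle g_1,\ldots,g_j\rangle$ built from the random elements.
\begin{itemize}
\item While $H_j\neq G$, the index $[G:H_j]\geq 2$, so $g_{j+1}\notin H_j$ with probability at least $\tfrac12$.
\item Each strict increase at least doubles $|H_j|$, so at most $\log_2|G|\leq M$ increases are needed.
\end{itemize}
The comparison with $\mathbb{Z}_2^M$, where every proper subgroup has index exactly a power of $2$ and increases are as slow as possible, is a stochastic-domination argument. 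I expect that coupling to be the main obstacle, and it is exactly the content of Pak's theorem, so I would cite it rather than reprove it.

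The equality is computed in $\mathbb{Z}_2^M\cong\mathbb{F}_2^M$. There, $k$ uniform vectors generate if and only if they span, i.e.\ the $M\times k$ matrix they form has full row rank $M$. Choosing the rows one at a time, the $(i+1)$st row must avoid the $2^i$-element span of the previous $i$ rows in $\mathbb{F}_2^k$. This happens with probability $1-2^{i-k}$, giving
\[
\varphi_k(\mathbb{Z}_2^M)=\prod_{i=0}^{M-1}\left(1-2^{i-k}\right)=\prod_{i=k-M+1}^{k}\left(1-\frac{1}{2^i}\right).
\]
For the strict tail bound, use $\prod(1-x_i)\geq 1-\sum x_i$ for $x_i\in[0,1]$. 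This gives
\[
\prod_{i=k-M+1}^{k}\left(1-2^{-i}\right)\geq 1-\sum_{i=k-M+1}^{k}2^{-i}> 1-2^{-(k-M)}\geq 1-\frac{8}{2^{k-M}},
\]
which is in fact stronger than required.

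For the two highlighted cases, the constant $8$ is useless, so I would bound the infinite products directly. For $k=M$ the product is $\prod_{i=1}^{M}(1-2^{-i})$, which is at least the infinite product $\prod_{i\geq1}(1-2^{-i})$. Keeping the first three factors exactly and bounding the rest by $1-\sum_{i\geq 4}2^{-i}=\tfrac78$ gives
\[
\tfrac12\cdot\tfrac34\cdot\tfrac78\cdot\tfrac78=\tfrac{147}{512}\approx 0.287>\tfrac14.
\]
For $k=M+1$ the product is $\prod_{i=2}^{M+1}(1-2^{-i})$. Treating it the same way gives
\[
\tfrac34\cdot\tfrac78\cdot\left(1-\sum_{i\geq4}2^{-i}\right)=\tfrac34\cdot\tfrac78\cdot\tfrac78=\tfrac{147}{256}\approx 0.574>\tfrac12.
\]
Combined with $\varphi_k(G)\geq\varphi_k(\mathbb{Z}_2^M)$, these give $\varphi_M(G)>\tfrac14$ and $\varphi_{M+1}(G)>\tfrac12$.
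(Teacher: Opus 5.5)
Your proposal is correct and takes the same route as the paper: the paper offers no argument beyond the citation \cite[Theorem 1.1]{Pak}, and you likewise cite Pak for the key inequality $\varphi_k(G)\geq\varphi_k(\mathbb{Z}_2^M)$. Your explicit checks fill in what the paper leaves implicit: the $\mathbb{F}_2$ rank computation of the product formula, the sharper tail bound $1-2^{M-k}$, and the constants $\tfrac{147}{512}>\tfrac14$ and $\tfrac{147}{256}>\tfrac12$. The one unflagged point is that for $k\leq M-2$ some $x_i=2^{-i}$ exceed $1$, so $\prod(1-x_i)\geq 1-\sum x_i$ is not covered by your hypothesis $x_i\in[0,1]$; there, however, the product contains the factor $1-2^{0}=0$ while $1-8\cdot 2^{M-k}<0$, so the bound holds trivially.
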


\section{Error-prone permutation sampling and sample error detection}
\label{sec:permutation_sampling}
\subsection{Error-prone permutation sampling}
For any subset $S \subset S_n$, write $\mydef{U_S}$ for the $S_n$-valued random variable uniformly distributed on $S$: 
\[\Pr(U_S = \sigma) = \frac{1}{|S|}\I_{\sigma \in S} = \begin{cases} 
      \frac{1}{|S|} & \sigma \in S  \\
      0 & \text{ otherwise} 
   \end{cases} 
\] where $\mydef{\mathbb{I}_{*}}$ is the indicator function on the predicate $*$. 
Unlike the classical setting, we are interested in distributions on permutation groups which are not uniform.

Our main object of interest is the random variable $\mydef{X}=\mydef{X(G,p)}$ defined as a  mixture of $\mydef{Y}=U_G$ and $\mydef{Z}=U_{S_n}$ and depending on a fixed \mydef{error probability} we denote by $\mydef{p} \in [0,1]$.
Observations of  $X$ represent attempts to sample from $G$ via the procedure: 
\begin{center} \fbox{ \parbox{0.95\textwidth}{\textbf{Error-prone sampling procedure}\textit{ With probability $1-p$ return a uniform random   element of $G$ and otherwise return an \mydef{error} permutation uniformly sampled from $S_n$}}}
\end{center} 
 The probability mass function of $X$ is a convex combination of those of $Y$ and $Z$:
\begin{align}
\label{eq:pmfX}
\nonumber \Pr(X=\sigma) &= (1-p)\cdot \Pr(Y = \sigma)+p\cdot\Pr(Z=\sigma)\\ &=(1-p)\cdot \frac{1}{|G|} \I_{\sigma \in G} + p\cdot \frac{1}{n!}\\ \nonumber &= \begin{cases} 
      (1-p)\frac{1}{|G|}+p\frac{1}{n!} &  \sigma  \in G \\
      p\frac{1}{n!} & \sigma \not\in G 
   \end{cases} 
\end{align}
This random variable is a hidden mixture model with respect to a latent Bernoulli random variable \textrm{Ber}$(p)$ whose value determines whether to sample uniformly from $S_n$ or $G$. The value of \textrm{Ber}$(p)$ cannot be observed and so one cannot determine if an observation of $X$  in $G$ came from $Y$ or $Z$. Hence, we refer to observations of $X$ which do not belong to $G$ as \mydef{recognizable errors}. Since elements of $G$ have equal mass under the distribution of $Z$, we can rewrite $X$ as another mixture model: 
\begin{align}
\label{eq:pmfX_q}
 \nonumber \Pr(X=\sigma) = \left(1-p+\frac{p|G|}{n!}\right)\frac{1}{|G|} \I_{\sigma \in G} + \frac{p}{n!} \I_{\sigma \not\in G} 
&= (1-q)\frac{1}{|G|}\I_{\sigma \in G} + q \cdot \frac{1}{n!-|G|}\I_{\sigma \not\in G}.\\ \nonumber &= \begin{cases} 
      \frac{1-q}{|G|} &  \sigma  \in G \\
      \frac{q}{n!-|G|} & \sigma \not\in G 
   \end{cases} .
\end{align}
We refer to the value of $\mydef{q} = p(1-|G|/n!)$  as the \mydef{recognizable error probability} of $X$.

\subsection{Sample Error Detection}
\label{secsec:samplerrordetection} It is often the case that one has \emph{a priori} knowledge about the permutation group $G \leq S_n$ which can be used to reduce the error rate of $X$. Such is the case, for example, in the context of monodromy groups in algebraic geometry \cite{Sottile,Decomposable} one often knows that the group is transitive or primitive based on the geometry involved.

Let $\mydef{\mathcal P}$ be a property of permutations which holds for every element of $G$. Write $\mydef{S_{\mathcal P}}\subseteq S_n$ for the subset of permutations which have property $\mathcal P$. Note that $G \subseteq S_{\mathcal P}$.  We define $\mydef{X_{\mathcal P}}$ to be the random variable corresponding to the sampling procedure:
\begin{center}
 \fbox{ \parbox{0.9\textwidth}{\textbf{Error-prone sampling procedure with $\mathcal P$-error detection} \textit{repeatedly sample from $X$ until a permutation $\sigma$ is sampled which has property $\mathcal P$, then return $\sigma$}}}

\end{center} 
 Extending notation, write $\mydef{q_{\mathcal P}}$ for the probability that an observation of $X_{\mathcal P}$ does not belong to $G$.
\begin{lemma}
\label{lem:pmf_sample_ed}
The probability mass function of $X_{\mathcal P}$ is 
\begin{align*}\Pr(X_{\mathcal P} = \sigma) &= (1-q_{\mathcal P}) \cdot  \mathbb{I}_{\sigma \in G} + q_{\mathcal P}  \cdot  \mathbb{I}_{\sigma \in S_{\mathcal P}-G}
\end{align*}
where 
\[
q_{\mathcal P} = \frac{(|S_{\mathcal P}|-|G|)p}{n!-p(n!-|S_{\mathcal P}|)} = \frac{(B-A)p}{1-p(1-B)} \quad \quad A = |G|/n! \quad B = |S_{\mathcal P}|/n!
\]  The expected number of observations of $X$ required to obtain one of $X_{\mathcal P}$ is $\frac{n!}{n!-p(n!-|S_{\mathcal P}|)}=\frac{1}{1-p(1-B)}$. 
\end{lemma}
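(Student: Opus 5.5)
The plan is to identify $X_{\mathcal P}$ as $X$ conditioned on the event $\{X \in S_{\mathcal P}\}$ (rejection sampling), and then read off everything from the pmf \eqref{eq:pmfX}. Note that the displayed pmf should be read with the uniform normalizations inside $G$ and inside $S_{\mathcal P}-G$, i.e. $\Pr(X_{\mathcal P}=\sigma) = \frac{1-q_{\mathcal P}}{|G|}\I_{\sigma\in G} + \frac{q_{\mathcal P}}{|S_{\mathcal P}|-|G|}\I_{\sigma \in S_{\mathcal P}-G}$, in analogy with \eqref{eq:pmfX_q}; this is what I will prove.

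\textbf{Step 1 (rejection sampling).} Let $X_1,X_2,\ldots$ be i.i.d.\ copies of $X$ and $T=\min\{t : X_t \in S_{\mathcal P}\}$, so $X_{\mathcal P}=X_T$. Put $s=\Pr(X\in S_{\mathcal P})$. For $\sigma\in S_{\mathcal P}$, summing over the value of $T$ and using independence,
\[
\Pr(X_T=\sigma)=\sum_{t\ge 1}(1-s)^{t-1}\Pr(X=\sigma)=\frac{\Pr(X=\sigma)}{s},
\]
provided $s>0$ (true whenever $p<1$ or $B>0$, since $G\subseteq S_{\mathcal P}$ and $G\neq\emptyset$). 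The same computation shows $T$ is geometric with success probability $s$, so $\mathbb E(T)=1/s$.

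\textbf{Step 2 (computing $s$).} From \eqref{eq:pmfX}, since $G\subseteq S_{\mathcal P}$, summing gives $s=(1-p)+p\frac{|S_{\mathcal P}|}{n!} = 1-p(1-B)=\frac{n!-p(n!-|S_{\mathcal P}|)}{n!}$. This immediately yields the claimed expected number of observations.

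\textbf{Step 3 (the pmf and $q_{\mathcal P}$).} By Step 1, $X_{\mathcal P}$ is constant on $G$ (value $\frac{(1-p)/|G|+p/n!}{s}$) and constant on $S_{\mathcal P}-G$ (value $\frac{p/n!}{s}$), and zero outside $S_{\mathcal P}$. Hence it has the stated mixture form with $q_{\mathcal P}=\Pr(X_{\mathcal P}\notin G) = \frac{p(|S_{\mathcal P}|-|G|)/n!}{s} = \frac{(B-A)p}{1-p(1-B)}$, and multiplying numerator and denominator by $n!$ gives the other expression. There is no real obstacle; the only care needed is the justification in Step 1 that the stopped variable has the conditional law (the geometric sum), and the normalization remark above.
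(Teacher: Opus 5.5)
Your proposal is correct and is essentially the paper's argument: the paper uses a three-state Markov chain with one-step probabilities $a=1-p+Ap$ and $b=(B-A)p$, giving $q_{\mathcal P}=b/(a+b)$ and expected waiting time $1/(a+b)$, which is your geometric sum with $s=a+b$. Your pointwise computation additionally establishes uniformity on $G$ and on $S_{\mathcal P}-G$, which the paper's aggregate argument leaves implicit. Your reading of the displayed pmf, with normalizations $\frac{1}{|G|}$ and $\frac{1}{|S_{\mathcal P}|-|G|}$, is the right one.
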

\begin{proof}
The process $X_{\mathcal P}$ can be thought of as a small Markov chain as illustrated in \autoref{fig:markov}. 
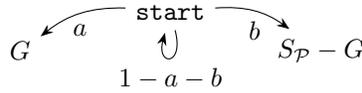
\begin{figure}[!htpb]
\begin{tikzpicture}[>=Stealth, node distance=0.5cm, every node/.style={font=\small}]
  \node[] (start) {\texttt{start}};
  \node[ below of=start, xshift=-2cm] (G) {$G$};
  \node[ below of=start, xshift=2cm] (SG) {$S_\mathcal{P} - G$};

  \draw[->] (start) to[bend right=20] node[below] {$a$} (G);
  \draw[->] (start) to[bend left=20] node[below] {$b$} (SG);
  \draw[->] (start) edge[loop below] node[below] {$1 - a - b$} (start);
\end{tikzpicture}
\caption{Markov chain for $X_{\mathcal P}$.}
\label{fig:markov}
\end{figure}

\noindent The probability $\mydef{a}$ of returning an element of $G$ on a single step of the Markov chain is
\[a= 1-q=1-p+\frac{|G|}{n!}p=1-p+Ap, \quad \text{ where } \mydef{A} = \frac{|G|}{n!}.\]
 The probability of returning an element of $S_{\mathcal P}-G$ is $\mydef{b}=(B-A)p$ where $\mydef{B} = \frac{|S_{\mathcal P}|}{n!}$. At each step the probability of returning something is $a+b$. Since the probability of returning an element of $G$ at a step is $a$, the probability of returning an element of $G$ at any time during the entire procedure is $\frac{a}{a+b}$. The expected number of repetitions necessary to return something is $\frac{1}{a+b}$. 
\end{proof}

\section{Naive group recovery, error detection, and amplification}
\label{sec:naive_group_recovery}
\subsection{Naive group recovery}
Our ultimate goal is to find $G$ using only observations of $X=X(G,p)$. The following algorithm,  used in practice (see \cite{Asante,Asante2025,DuffMinimal}), attempts to achieve this task by returning the permutation group generated by $k$ \iid samples $X_1,\ldots,X_k$ for some $k$. We refer to it as the  \mydef{naive group recovery algorithm} due to its simplicity and sensitivity to errors.

\begin{algorithm}[]
  \SetKwIF{If}{ElseIf}{Else}{if}{then}{elif}{else}{}
  \DontPrintSemicolon
  \SetKwProg{Naive Group Recovery}{Naive Group Recovery}{}{}
  \LinesNotNumbered
  \KwIn{$(X,k)$ \\ $\bullet$ The ability to sample from the distribution of $X$,\\  
  $\bullet$ A sample size $k$} 
  \KwOut{$G$} 
  \nl Sample $X_1,\ldots,X_k$ from the distribution of $X$ and \textbf{return} $\langle X_1,\ldots,X_k \rangle$\;
  \caption{Naive Group Recovery\label{alg:naivealgorithm}}
\end{algorithm}

\autoref{alg:naivealgorithm} is a randomized algorithm with some success rate  $\mydef{\gamma(G,p,k)}$.
\begin{lemma}
\label{lem:naiveprob}
The probability that \autoref{alg:naivealgorithm} returns $G$ is 
\[
\gamma(G,p,k) = (1-q)^k\varphi_k(G).
\]
If \autoref{alg:naivealgorithm} is applied to $X_{\mathcal P}$ then its success rate is $\mydef{\gamma_{\mathcal P}(G,p,k)}=(1-q_{\mathcal P})^k\varphi_k(G)$.
\end{lemma}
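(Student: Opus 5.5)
The plan is to decompose the success event according to where the samples land. Algorithm \ref{alg:naivealgorithm} returns $G$ exactly when $\langle X_1,\ldots,X_k\rangle = G$. This happens if and only if two things hold: every $X_i$ lies in $G$, and the $X_i$ generate $G$. The first condition is necessary, since any $X_i\notin G$ forces the generated group out of $G$. So I would write
\[
\gamma(G,p,k)=\Pr\bigl(X_1,\ldots,X_k\in G\bigr)\cdot \Pr\bigl(\langle X_1,\ldots,X_k\rangle=G \,\bigm|\, X_1,\ldots,X_k\in G\bigr).
\]
By independence of the $X_i$, the first factor is $\Pr(X\in G)^k$. From the rewritten mixture \eqref{eq:pmfX_q}, $\Pr(X\in G)=|G|\cdot\frac{1-q}{|G|}=1-q$, so the first factor is $(1-q)^k$.

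For the second factor, the key observation is again from \eqref{eq:pmfX_q}. Conditioned on $X\in G$, the mass function is constant on $G$, equal to $\frac{1-q}{|G|}$, so $X$ conditioned on landing in $G$ is distributed as $U_G$. Because the $X_i$ are independent, conditioning on the product event $\{X_i\in G \ \forall i\}$ makes $X_1,\ldots,X_k$ i.i.d. uniform on $G$. The conditional probability that they generate $G$ is therefore exactly $\varphi_k(G)$, by definition. Multiplying the two factors gives $\gamma(G,p,k)=(1-q)^k\varphi_k(G)$.

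For the $\mathcal P$-version, I would repeat the argument verbatim with $X_{\mathcal P}$ in place of $X$, using Lemma \ref{lem:pmf_sample_ed}. The only step needing care, which I regard as the main (mild) obstacle, is checking that $X_{\mathcal P}$ conditioned on lying in $G$ is again uniform on $G$. Lemma \ref{lem:pmf_sample_ed} as written uses indicators that should be read as normalized; I would verify the uniformity directly from the Markov chain description. Each pass of the loop yields a given $\sigma\in G$ with the same probability $\Pr(X=\sigma)=\frac{1-q}{|G|}$, independently of the pass. Hence the returned value, conditioned on being in $G$, is uniform on $G$, and $\Pr(X_{\mathcal P}\in G)=1-q_{\mathcal P}$ by definition of $q_{\mathcal P}$. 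The same factorization then gives $\gamma_{\mathcal P}(G,p,k)=(1-q_{\mathcal P})^k\varphi_k(G)$.
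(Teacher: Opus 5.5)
Your proposal is correct and follows the paper's proof: you factor the success probability as $\Pr(X_1,\ldots,X_k\in G)\cdot\varphi_k(G)$, using that every sample must lie in $G$ together with independence, and then repeat the argument for $X_{\mathcal P}$. You also make explicit a step the paper leaves tacit, namely that $X$ (and $X_{\mathcal P}$) conditioned on landing in $G$ is uniform on $G$, and you rightly note that the indicators in Lemma~\ref{lem:pmf_sample_ed} need the normalizations $1/|G|$ and $1/(|S_{\mathcal P}|-|G|)$.
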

\begin{proof}
The returned group $\langle X_1,\ldots, X_k \rangle$ is $G$ only if each $X_1,\ldots,X_k$ belongs to $G$. Conditioning on each observation belonging to $G$, the probability that they generate $G$ is $\varphi_k(G)$ so
\[
    \Pr(\langle X_1,\ldots,X_k \rangle = G) = \varphi_k(G) \cdot \Pr(X_1,\ldots,X_k \in G)= \varphi_k(G) \cdot \Pr(X \in G)^k= \varphi_k(G) \cdot (1-q)^k.
   \] The analogous proof for $\gamma_{\mathcal P}(G,p,k)$ follows the same logic. 
\end{proof}
One drawback of \autoref{alg:naivealgorithm} is that the user must choose an appropriate $k$ for an unknown group $G$. There are bounds on $k$ which guarantee $\varphi_k(G)>1-\varepsilon$ and depend only on $n$ and $\varepsilon>0$ \cite[Corollary 1.4]{Colva}. Using $k$ larger than such a bound incurs the cost that $(1-q)^k$ becomes smaller. Conversely, as $k$ becomes small $\varphi_k(G)$ may decrease as well, possibly to zero. Pak's result \autoref{lem:Pak} gives a direct bound on $\varphi_k(G)$ via a worst-case scenario based on the order of $G$:
\[
\gamma(G,p,k) > (1-q)^k \varphi_k(\mathbb{Z}_2^M) >(1-p)^k\left(1-\frac{8}{2^{k-M}}\right) \quad \text{ where } M=\lceil \log_2(|G|)\rceil.
\] 

Combining \autoref{lem:Pak} with \autoref{lem:naiveprob} produces crucial threshold $\mydef{\omega_{\delta}(G,k)}$ on $q$ which guarantees that \autoref{alg:naivealgorithm} returns the correct answer with probability at least $\frac{1}{2}+\delta$. 
\begin{corollary}
\label{cor:prob_bounds_for_half_prob}
Fix $G \leq S_n$ and $\delta \in \left[0,\frac 1 2\right)$. The value $\gamma(G,p,k)$ is greater than $1/2+\delta$ if 
\begin{equation}
\label{eq:true_q_bound}
q <  1-\sqrt[k]{\frac{\frac{1}{2} + \delta}{\varphi_k(G)}}=:\mydef{\omega_{\delta}(G,k)}.
\end{equation}
A sufficient condition for this inequality to hold is 
\begin{equation}
\label{eq:pak_p_bound}
k>\log_2(|G|)+3\quad \text{ and }\quad p< 1-\sqrt[k]{\frac{\frac{1}{2} + \delta}{\left(1-\frac{8}{2^{k-\log_2(|G|)}}\right)}}.
\end{equation}
\end{corollary}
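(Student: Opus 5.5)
The plan is to start from the exact formula of \autoref{lem:naiveprob}, $\gamma(G,p,k)=(1-q)^k\varphi_k(G)$, and solve the inequality $\gamma>\frac12+\delta$ for $q$. Assume $\varphi_k(G)>0$; otherwise the threshold is not meaningful, and in the regime of \eqref{eq:pak_p_bound} positivity follows from \autoref{lem:Pak}. Then $(1-q)^k\varphi_k(G)>\frac12+\delta$ is equivalent to $(1-q)^k>\frac{1/2+\delta}{\varphi_k(G)}$. Since $t\mapsto t^{1/k}$ is strictly increasing on $[0,\infty)$ and $1-q\ge 0$, this is equivalent to $1-q>\sqrt[k]{(1/2+\delta)/\varphi_k(G)}$. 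Rearranging gives exactly $q<\omega_\delta(G,k)$, which proves \eqref{eq:true_q_bound}. (If the ratio exceeds $1$, the threshold is negative, the condition is vacuous, and there is nothing to prove.)

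For the sufficient condition \eqref{eq:pak_p_bound}, I will show that it implies \eqref{eq:true_q_bound} by two monotone comparisons.

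First, $q=p(1-|G|/n!)\le p$, so it suffices to have $p<\omega_\delta(G,k)$.

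Second, $\omega_\delta(G,k)$ is increasing in $\varphi_k(G)$, so any lower bound on $\varphi_k(G)$ gives a smaller, and hence sufficient, threshold. By \autoref{lem:Pak}, with $M=\lceil\log_2|G|\rceil$,
\[
\varphi_k(G)>1-\frac{8}{2^{k-M}}.
\]
I then need to replace $M$ by $\log_2|G|$. Since $M\ge\log_2|G|$, we have $2^{k-M}\le 2^{k-\log_2|G|}$, so the expression $1-8/2^{k-\log_2|G|}$ is \emph{larger} than Pak's bound. The substitution therefore goes in the wrong direction. This is the main obstacle, and I expect it to need care.

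To resolve it, I would first check whether Pak's product bound $\prod_{i=k-M+1}^k(1-2^{-i})$ actually beats $1-8/2^{k-\log_2|G|}$. The crude estimate $\prod(1-2^{-i})\ge 1-\sum_{i>k-M}2^{-i}=1-2^{-(k-M)}$ gains a factor of $8$ over the stated tail. That gain absorbs the rounding loss of a factor at most $2$ from replacing $M$ by $\log_2|G|$:
\[
\varphi_k(G)\ge 1-\frac{1}{2^{k-M}}\ge 1-\frac{2}{2^{k-\log_2|G|}}>1-\frac{8}{2^{k-\log_2|G|}}.
\]
So the displayed inequality holds. The hypothesis $k>\log_2|G|+3$ guarantees that $1-8/2^{k-\log_2|G|}>0$, so the $k$-th root is well defined and the bound is meaningful.

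Chaining the two comparisons gives
\[
q\le p<1-\sqrt[k]{\frac{1/2+\delta}{1-8/2^{k-\log_2|G|}}}\le\omega_\delta(G,k),
\]
and hence $\gamma>\frac12+\delta$ by the first part.
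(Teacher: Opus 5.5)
Your proof is correct and follows the same route as the paper. The paper gives no separate proof; its justification is the remark just before the corollary: invert $\gamma(G,p,k)=(1-q)^k\varphi_k(G)$ from \autoref{lem:naiveprob}, then use $q\le p$ and Pak's lower bound on $\varphi_k(G)$.

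Your extra step adds rigor the paper lacks. The paper passes from $M=\lceil\log_2|G|\rceil$ in its displayed chain to $\log_2|G|$ in the corollary without comment, and this substitution goes in the wrong direction. Your estimate $\prod_{i=k-M+1}^{k}(1-2^{-i})\ge 1-2^{-(k-M)}$ is valid here because $k\ge M+3$, and it leaves enough slack in the constant $8$ to absorb that rounding.
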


\begin{example}[The monodromy group of lines on a cubic surface] \label{ex:27lines}  Consider the group generated by 
\begin{align*}
\sigma_1 &= (1,10,13)(2,24,6)(3,17,11)(4,23,8)(5,26,25)(7,18,12)(9,20,16)(14,27,19)(15,21,22)\\
\sigma_2 &= (1,18,13,22,10,11)(2,4,21,27,9,15)(3,20,26,5,14,7)(6,25,23)(8,12,17,24,16,19).
\end{align*}
in $S_{27}$. 
This is the Weyl group $\mydef{W(E_6)}$ of $E_6$, occuring in the LMFDB database as Galois group  \href{https://beta.lmfdb.org/GaloisGroup/27T1161}{27T1161} \cite{lmfdb}. It is the \emph{monodromy group} of the classical \textit{problem of $27$ lines} (see \cite{CayleySalmon,JordanMonodromy}). 

\autoref{lem:naiveprob} states that the probability that \autoref{alg:naivealgorithm} succeeds on $W(E_6)$ is $\varphi_k(G)(1-q)^k$ which is  approximately $\varphi_k(G)(1-p)^k$ since $|W(E_6)|=51840 \ll 27!$. 
\autoref{lem:Pak} applied to $W(E_6)$ states that $M+1  = \lceil\log_2(51840) \rceil+1 =17$ uniform independent elements of $W(E_6)$  are more likely than not to generate it. In the error-prone setting, even with a small error probability like $p=0.01$, the confidence that $17$ elements sampled from $X$ belong to $W(E_6)$ is not very high: 
\[
\Pr(X_1,\ldots,X_{17} \in G)=(1-q)^{17} \approx (0.99)^{17} \approx 0.84.
\] 
In this instance, Pak's bound of $17$ is very conservative. \autoref{table:phiks} lists values of $\varphi_k(W(E_6))$ obtained via Monte Carlo approximations ($N=100,000$) in \texttt{Oscar.jl} \cite{OSCAR,OSCAR-book}. Note that $\varphi_2(W(E_6))>\frac 1 2$.

\begin{table}[!htpb]
{\hspace{-15pt}\footnotesize{
\begin{tabular}{|c|c|c|c|c|c|c|c|c|c|c|c|c|c|c|} \hline 
$k$ & $1$ & $2$ & $3$ & $4$ & $5$ & $6$ & $7$ & $8$ & $9$ & $10$ & $11$ & $ \cdots $ & $ 16$ & 17 \\ \hline \hline 
$\varphi_k(W(E_6))$ & $0.00$ & $0.667$ & $0.873$ & $0.935$ & $0.970$ & $0.988$ & $0.98$ & $0.993$ & $0.997$ & $0.999$ & $1.0$ & $\cdots$ & $1.0$  & $1.0$\\  \hline 
\end{tabular}
}}
\caption{Approximate values of $\varphi_k(W(E_6))$ for $k=1,\ldots,17$.}
\label{table:phiks}
\end{table}
\begin{figure}[!htpb]
\includegraphics[scale=0.2]{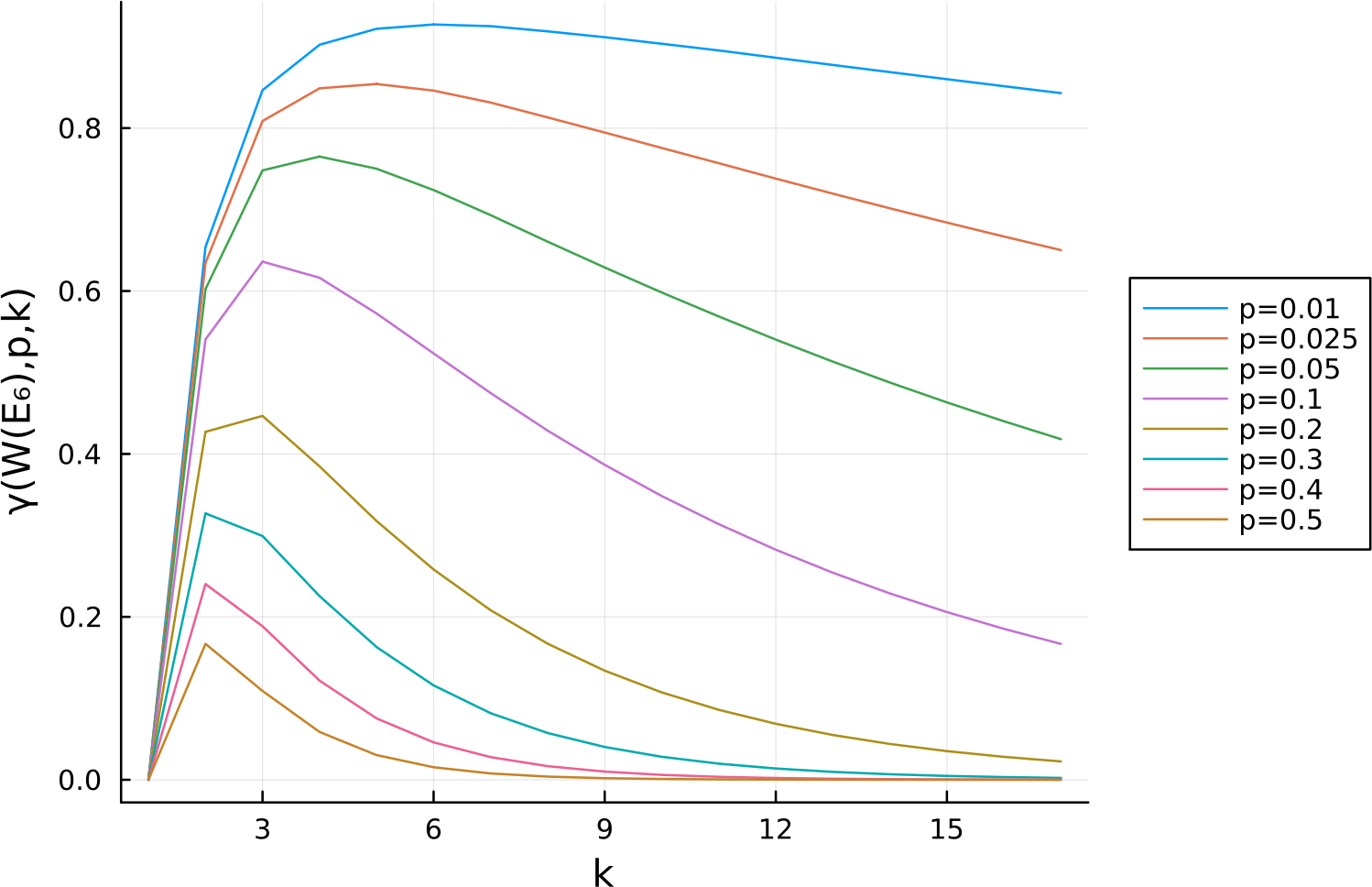}
\includegraphics[scale=0.2]{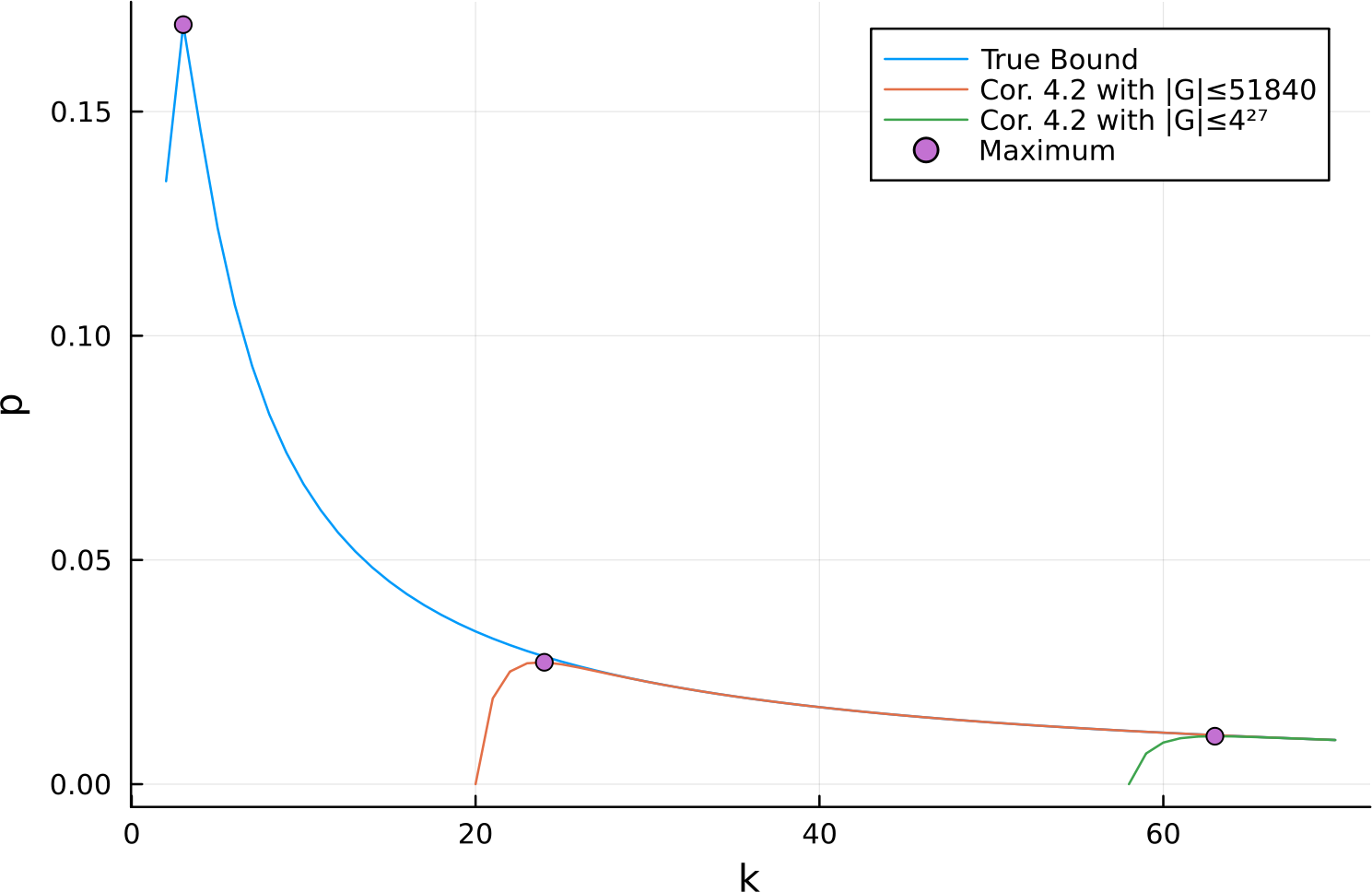}
\caption{(Left) The probabilities $\gamma(W(E_6),p,k)$ for various $p$ and $k$. (Right) Bounds on $p$ from \autoref{cor:prob_bounds_for_half_prob} for which $\gamma(W(E_6),p,k)>1/2$. }
\label{fig:E6pbounds}
\end{figure}

 \autoref{fig:E6pbounds} (Left) shows $\gamma(W(E_6),p,k)$ for $k=1,\ldots,17$ and various $p$.  With a small error probability of $p=0.01$ and the optimal value of $k=6$ \autoref{alg:naivealgorithm}  recovers $W(E_6)$ only $93\%$ of the time, showcasing the need for an improved recovery algorithm, even when $p$ is small. 
\autoref{fig:E6pbounds} (Right) plots the bounds given by \autoref{cor:prob_bounds_for_half_prob} on $p$ so that $\gamma(G,p,k)$ succeeds with probability at least $\frac 1 2$ using $|G| = 51840$ and $|G|<4^{27}$. Also displayed is the true bound of $\omega_{0}(G,k)$.
\end{example}

\subsection{Error Detection and \autoref{alg:naivealgorithm}}
\label{secsec:error_detected_naive}
The success rate $\gamma(G,p,k)$ of \autoref{alg:naivealgorithm}  is often lower than desired, but can be increased by either sample error detection or group error detection. Specifically, given a property $\mathcal P$ which holds for all permutations in $G$ or a property $\mathcal Q$ which holds for $G$ we have two strategies for improving this success rate: 
\begin{itemize}
\item (Sample Error Detection) Increase the success rate $\gamma(G,p,k)$ from $ (1-q)^k\varphi_k(G)$ to \\ $\gamma_{\mathcal P}(G,p,k) = (1-q_{\mathcal P})^k\varphi_k(G)$, the success rate of  \autoref{alg:naivealgorithm} on $X_{\mathcal P}$ (see \autoref{lem:naiveprob}).
\item (Group Error Detection) Increase $\gamma(G,p,k)$ to $\gamma_{\mathcal Q}(G,p,k)$, the success rate  obtained by repeating \autoref{alg:naivealgorithm} until  the result has property $\mathcal Q$ (see \autoref{alg:errordetectinggrouprecovery}).
\end{itemize}
 
\noindent The following lemma quantifies the benefit of using sample error detection. 
 \begin{lemma}
 \label{lem:reduction_factor}
 Suppose $\mathcal P$ is a property of permutations such that $G \subseteq S_{\mathcal P}$. Then 
 \[
 \gamma_{\mathcal P}(G,p,k) \geq \frac 1 2 \iff  q \leq R \cdot  \omega_0(G,k) \iff p \leq \frac{\omega_0(G,k)}{(B-A)+\omega_0(G,k)(1-B)}
 \]
 where $\mydef{R} = \frac{q}{q_{\mathcal P}}= \frac{(1-A)(1-p(1-B))}{B-A}$, $A = \frac{|G|}{n!}$, and $B= \frac{|\mathcal S_{\mathcal P}|}{n!}$.
 \end{lemma}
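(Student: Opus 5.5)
The plan is to reduce everything to the closed form of $\gamma_{\mathcal P}$ from \autoref{lem:naiveprob} and the definition of $\omega_0(G,k)$ in \autoref{cor:prob_bounds_for_half_prob}, then do elementary algebra with the formula for $q_{\mathcal P}$ from \autoref{lem:pmf_sample_ed}.

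\textbf{Step 1: reduce to a bound on $q_{\mathcal P}$.} By \autoref{lem:naiveprob}, $\gamma_{\mathcal P}(G,p,k)=(1-q_{\mathcal P})^k\varphi_k(G)$. Assume $\varphi_k(G)>0$, since otherwise neither side can hold meaningfully. Then $\gamma_{\mathcal P}\geq \frac12$ is equivalent to $(1-q_{\mathcal P})^k\geq \frac{1/2}{\varphi_k(G)}$. Because $q_{\mathcal P}\in[0,1]$ and $t\mapsto t^{1/k}$ is monotone on $[0,\infty)$, we may take $k$-th roots. This gives $q_{\mathcal P}\leq 1-\sqrt[k]{\frac{1/2}{\varphi_k(G)}}=\omega_0(G,k)$, exactly the quantity of \eqref{eq:true_q_bound} with $\delta=0$. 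The non-strict inequality here matches the $\geq$ in the statement, whereas \autoref{cor:prob_bounds_for_half_prob} uses strict inequalities.

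\textbf{Step 2: the first equivalence via the ratio $R$.} Recall $q=p(1-A)$ from the definition of the recognizable error probability, and $q_{\mathcal P}=\frac{(B-A)p}{1-p(1-B)}$ from \autoref{lem:pmf_sample_ed}. Dividing gives $R=q/q_{\mathcal P}=\frac{(1-A)(1-p(1-B))}{B-A}$, as claimed. Since $R>0$, the condition $q_{\mathcal P}\leq\omega_0$ is equivalent to $q\leq R\,\omega_0$.

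For this step I need $1-p(1-B)>0$. This holds because $B\geq A>0$ (since $G\subseteq S_{\mathcal P}$) and $p\leq 1$. I also need $B>A$ and $p>0$. The degenerate cases $B=A$ (then $q_{\mathcal P}=0$ and every condition holds trivially) and $p=0$ I would dispose of separately.

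\textbf{Step 3: the explicit bound on $p$.} Because $R$ itself depends on $p$, the middle condition is not yet an explicit bound on $p$, so I return to $q_{\mathcal P}\leq\omega_0$ directly. Multiply through by the positive quantity $1-p(1-B)$ to get $(B-A)p\leq\omega_0-\omega_0p(1-B)$. Collecting terms gives $p\big((B-A)+\omega_0(1-B)\big)\leq\omega_0$.

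The coefficient of $p$ is nonnegative, and positive unless $B=A$ and ($\omega_0=0$ or $B=1$), which are again degenerate cases. Dividing by it yields the third condition.

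\textbf{Main obstacle.} The only delicate point is sign and positivity bookkeeping: the sign of $\omega_0$ (it may be negative when $\varphi_k(G)<\frac12$, in which case all three conditions fail simultaneously for $p>0$), the positivity of the denominators, and the degenerate cases $B=A$ and $p=0$. I would handle these with a short remark before the main chain of equivalences.
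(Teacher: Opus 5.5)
Your main chain is the paper's argument. You reduce via $\gamma_{\mathcal P}=(1-q_{\mathcal P})^k\varphi_k(G)$ to $q_{\mathcal P}\le\omega_0(G,k)$, then clear the positive denominator $1-p(1-B)$. The paper does the same algebra after substituting $p=q/(1-A)$ and dividing by $1-A$ at the end. Deriving the first equivalence straight from the closed form, as you do, is actually better than the paper's appeal to \autoref{cor:prob_bounds_for_half_prob}. That corollary is only a one-sided sufficient condition with strict inequality.

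What is wrong is your treatment of $\omega_0<0$ (i.e.\ $0<\varphi_k(G)<\frac12$). In that case $c=(B-A)+\omega_0(1-B)$ can be negative, contradicting your claim that the coefficient of $p$ is nonnegative. When $c<0$ the third condition does \emph{not} fail: $\omega_0/c>1$, because for $c<0$ this is equivalent to $\omega_0B<B-A$, which holds since $\omega_0<0<B$. So $p\le\omega_0/c$ holds for every $p\in[0,1]$, while $\gamma_{\mathcal P}(G,p,k)\le\varphi_k(G)<\frac12$.

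For a concrete case, take $G=C_6\le S_6$ generated by a $6$-cycle, $k=1$, and $S_{\mathcal P}$ the dihedral group of order $12$ containing it. This gives $\varphi_1(G)=\frac13$, $\omega_0=-\frac12$, $c=-\frac{29}{60}$ and $\omega_0/c=\frac{30}{29}$. Likewise, your claim that for $B=A$ ``every condition holds trivially'' is true only when $\omega_0\ge0$; there $q_{\mathcal P}=0$, so $\gamma_{\mathcal P}=\varphi_k(G)$, and $R$ is undefined.

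So the third equivalence needs the extra hypothesis $\varphi_k(G)\ge\frac12$, i.e.\ $\omega_0\ge0$; the paper's proof is silent on this too. Under that hypothesis $c\ge B-A>0$ whenever $B>A$, and your Step~3 goes through as written. Without it, the correct third condition is the undivided inequality $p\,c\le\omega_0$.
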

 \begin{proof}
 Recall that by  \autoref{lem:pmf_sample_ed}, we have that $q_{\mathcal P} = \frac{(B-A)p}{1-p(1-B)}$ and $q=p(1-A)$ and so the expression for $R$ follows algebraically. By \autoref{cor:prob_bounds_for_half_prob} we have that $\gamma_\mathcal P(G,p,k) \geq \frac 1 2 \iff q_{\mathcal P} \leq \omega_0(G,k)$. Substituting $p = \frac{q}{1-A}$ into this inequality and simplifying results in the equivalent inequality 
 \[
 q < \frac{\omega_0(G,k)(1-A)}{(B-A)+\omega_0(G,k)(1-B)}.
 \]
Dividing both sides by $(1-A)$ yields the equivalent inequality in terms of $p$.
 \end{proof}
 \begin{corollary}
 \label{cor:simpler_R}
 For $G \subset S_{\mathcal P}$ and  $p\leq \frac 1 2$, the error reduction factor $R$ is bounded below by 
 \[
 R =\frac{q}{q_{\mathcal P}} \geq \frac 1 2(B+B^{-1})
 \]
 where $B = \frac{|S_{\mathcal P}|}{n!}$. In particular, 
 $
 q_{\mathcal P} = \frac{q}{R} < \frac{2q}{B+B^{-1}} \leq \frac{2p}{B+B^{-1}} \leq \frac{2\widetilde{p}}{B+B^{-1}}
$
 whenever $p< \widetilde{p}$. 
 \end{corollary}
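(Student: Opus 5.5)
The plan is to bound the two factors of the closed form for $R$ from \autoref{lem:reduction_factor} separately, using only $0 \le A \le B \le 1$ (which holds because $G \subseteq S_{\mathcal P} \subseteq S_n$) and $p \le \frac12$. Recall
\[
R = \frac{q}{q_{\mathcal P}} = \frac{(1-A)\,(1-p(1-B))}{B-A}.
\]
I assume $B > A$. In the degenerate case $B=A$, i.e.\ $S_{\mathcal P}=G$, \autoref{lem:pmf_sample_ed} gives $q_{\mathcal P}=0$, so $R=\infty$ and there is nothing to prove.

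\emph{Step 1 (the $p$-dependent factor).} The expression $1-p(1-B)$ is decreasing in $p$ because $1-B \ge 0$. Hence for $p \le \frac12$,
\[
1-p(1-B) \;\ge\; 1-\tfrac12(1-B) \;=\; \tfrac{1+B}{2}.
\]

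\emph{Step 2 (the group-dependent factor).} I claim $\frac{1-A}{B-A} \ge \frac1B$. Cross-multiplying by the positive quantities $B$ and $B-A$, this is equivalent to $B - AB \ge B - A$, i.e.\ to $A(1-B) \ge 0$, which holds.

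\emph{Step 3 (combining).} Multiplying the two bounds gives
\[
R \;\ge\; \frac{1+B}{2B} \;=\; \tfrac12\left(1 + B^{-1}\right) \;\ge\; \tfrac12\left(B + B^{-1}\right),
\]
where the last inequality is just $1 \ge B$.

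\emph{Step 4 (the chain for $q_{\mathcal P}$).} From $q_{\mathcal P} = q/R$ and Step 3 we get $q_{\mathcal P} \le \frac{2q}{B+B^{-1}}$. Next, $q = p(1-A) \le p$, which gives $\frac{2q}{B+B^{-1}} \le \frac{2p}{B+B^{-1}}$. Finally, $p < \widetilde p$ gives the last inequality in the chain.

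The one place needing care is the strict inequality $q_{\mathcal P} < \frac{2q}{B+B^{-1}}$ as written. Step 3 is strict unless $B = 1$, and in that case Step 2 is strict when $A > 0$, which always holds since $|G| \ge 1$. If $q = 0$, both sides vanish and the strict inequality degenerates to an equality, so in that edge case I would state the bound with $\le$.

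I do not expect any real obstacle here. The only thing to watch is keeping track of signs when cross-multiplying in Step 2, which is legitimate because $B > A \ge 0$.
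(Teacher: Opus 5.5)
Your argument is correct, and it takes a different route from the paper's. The paper splits $R$ additively using $1-A=(B-A)+(1-B)$, writing $R=1+(1-B)\bigl(\frac{1-p(1-B)}{B-A}-p\bigr)$. It then bounds the bracket using $p(1-B)\le p$, $B-A\le B$ and $p\le\frac12$, which lands exactly on $1+\frac{(1-B)^2}{2B}=\frac12(B+B^{-1})$. You instead bound the two multiplicative factors of $R$ separately. Because you keep the term $pB$ that the paper discards via $p(1-B)\le p$, you get the sharper bound $R\ge\frac{1+B}{2B}$, which exceeds the stated one by $\frac{1-B}{2}$. Equivalently $q_{\mathcal P}\le\frac{2B}{1+B}\,q$, which would give a slightly stronger Success Rate Check. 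The paper's form has the small merit of exhibiting $R-1$ as a multiple of $(1-B)$, so $R\ge 1$ with the excess controlled by how restrictive $\mathcal P$ is. Yours is shorter and more transparent.

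Your remark on strictness, however, is wrong. When $B=1$, Step 2 is not strict. By your own reduction it is equivalent to $A(1-B)\ge 0$, which becomes $0\ge 0$; directly, $\frac{1-A}{1-A}=1=\frac1B$. Step 1 is also an equality there. In fact, for $B=1$ (trivial $\mathcal P$) one has $q_{\mathcal P}=q$ and $R=1=\frac12(B+B^{-1})$. So the strict inequality $q_{\mathcal P}<\frac{2q}{B+B^{-1}}$ in the statement fails whenever $q>0$. The correct conclusion is that the chain holds with $\le$ in general, and the first inequality is strict exactly when $B<1$ and $q>0$. The paper's proof only establishes the non-strict bound and does not address this point.
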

 \begin{proof}
 We rewrite the formula of \autoref{lem:reduction_factor} using the identity $1-A = (B-A)+(1-B)$, then apply the bounds $p(1-B) \leq p$, $B-A \leq B$, and $1-p \geq \frac 1 2 $ (when $p \leq \frac 1 2$):
\[
 R = \frac{(1-A)(1-p(1-B))}{B-A} 
  = 1+(1-B) \left(\frac{1-p(1-B)}{B-A} - p\right) \geq 1+(1-B)\left(\frac{1-p}{B} - p\right) \]
 which is at least $\geq 1+\frac{(1-B)^2}{2B}=\frac{1}{2}\left(B+B^{-1}\right)$ since  $p\leq \frac 1 2$.
 \end{proof}

Let \mydef{$\mathcal Q$} be a property of permutation groups known to hold for  $G$. \autoref{alg:naivealgorithm} can be post-processed and repeated until the output $\widehat{G}$ has property $\mathcal Q$. We call the algorithm which runs \autoref{alg:naivealgorithm} until an output has property $\mathcal Q$ the \mydef{group recovery algorithm with $\mathcal Q$-error detection}.

\begin{algorithm}[!htpb]
  \SetKwIF{If}{ElseIf}{Else}{if}{then}{elif}{else}{}%
  \DontPrintSemicolon
  \SetKwProg{Error Detecting Group Recovery}{Error Detecting Group Recovery}{}{}
  \LinesNotNumbered
  \KwIn{$(X,k,\mathcal Q)$ \\ $\bullet$ The ability to sample from the distribution of $X$ \\   $\bullet$ A sample size $k$\\ $\bullet$ A property ${\mathcal Q}$ satisfied by $G$  }
  \KwOut{$G$}
  \nl Set $\widehat{G}$ to be the output of \autoref{alg:naivealgorithm} on input $(X,k)$.\;
  \nl \textbf{while} $\widehat{G}$ does not have property $\mathcal Q$,  set $\widehat{G}$ to be the output of a repetition of \autoref{alg:naivealgorithm}.\;
  \nl \Return{$\widehat{G}$}
  \caption{Group Recovery with $\mathcal Q$-Error  Detection\label{alg:errordetectinggrouprecovery}}
\end{algorithm}
We write $\mydef{\gamma_{\mathcal Q}(G,p,k)}$ for the success rate of \autoref{alg:errordetectinggrouprecovery}. If \autoref{alg:errordetectinggrouprecovery} is run on $X_{\mathcal P}$ for some permutation error detector $\mathcal P$, we denote the corresponding success rate by $\mydef{\gamma_{\mathcal P, \mathcal Q}(G,p,k)}$.

\begin{lemma}
\label{lem:GroupErrorDetectionLemma}
The success rate of \autoref{alg:errordetectinggrouprecovery}  on input $(X,k, \mathcal Q)$  is $\gamma_{\mathcal Q}(G,p,k) = \frac{a}{a+b}$
where 
\[\gamma(G,p,k) = a=\Pr(\langle X_1,\ldots, X_k \rangle = G) \text{ and }
b =\Pr(G \neq \langle X_1,\ldots, X_k \rangle \text{ has property }\mathcal Q).\] The expected number of runs of \autoref{alg:naivealgorithm} until some output is given is $1/(a+b)$. 
\end{lemma}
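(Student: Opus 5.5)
The approach mirrors the Markov chain argument of \autoref{lem:pmf_sample_ed}. I will view \autoref{alg:errordetectinggrouprecovery} as a sequence of independent trials. Each trial is one run of \autoref{alg:naivealgorithm}, which uses fresh i.i.d.\ observations of $X$, so the outputs $\widehat G_1,\widehat G_2,\ldots$ are i.i.d.\ random permutation groups. Each output falls into exactly one of three disjoint events: $E_1=\{\widehat G_j=G\}$, $E_2=\{\widehat G_j\neq G \text{ and } \widehat G_j \text{ has } \mathcal Q\}$, and $E_3=\{\widehat G_j \text{ lacks } \mathcal Q\}$. The probabilities of these events are $a$, $b$ and $1-a-b$ respectively. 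The key observation is that $E_1$ really is a stopping event: since $G$ has property $\mathcal Q$ by hypothesis, an output equal to $G$ terminates the loop. Moreover $a=\gamma(G,p,k)$ by definition, and \autoref{lem:naiveprob} evaluates it as $(1-q)^k\varphi_k(G)$.

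The algorithm stops at the first trial $T$ at which $E_1\cup E_2$ occurs, and it succeeds exactly when that trial is in $E_1$. I would compute the success probability by summing over the stopping time:
\[
\Pr(\text{success})=\sum_{t\ge1}(1-a-b)^{t-1}a=\frac{a}{a+b},
\]
which is valid provided $a+b>0$. The same computation can be phrased as conditioning: the success probability equals $\Pr(E_1\mid E_1\cup E_2)$, because the trial at which the algorithm stops is distributed like a single trial conditioned on $E_1\cup E_2$. Finally, $T$ is geometric with parameter $a+b$, so $\mathbb E[T]=1/(a+b)$, which is the second claim.

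The only delicate point is the degenerate case $a+b=0$, in which the algorithm never halts and the formula is undefined. I would note that this cannot happen in the cases of interest, since $a>0$ whenever $q<1$ (for then $(1-q)^k>0$, and $\varphi_k(G)>0$ for $k$ large enough by \autoref{lem:Pak}). Beyond this, the argument is routine, with independence across runs being the essential ingredient.
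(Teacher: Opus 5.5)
Your proposal is correct and matches the paper's argument: the paper reuses the three-state Markov chain of \autoref{fig:markov}, with probabilities $a$, $b$ and $1-a-b$ of stopping with $G$, stopping with a wrong $\mathcal Q$-group, or looping, and your geometric stopping-time sum is that chain made explicit. Your remarks that $G$ itself satisfies $\mathcal Q$ (so $E_1$ is a genuine stopping event) and that the degenerate case $a+b=0$ must be excluded are points the paper leaves implicit.
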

\begin{proof}
This result follows the same argument as \autoref{lem:pmf_sample_ed} using the Markov chain in \autoref{fig:markov}.
\end{proof}
\begin{example}
For $G=W(E_6)$, $p=0.75$, and $k=3$, \autoref{fig:piechart} indicates the proportions of outputs of \autoref{alg:naivealgorithm} and \autoref{alg:errordetectinggrouprecovery} which were $S_{27}$, $A_{27}$, $W(E_6)$. Simply knowing that $G = W(E_6)$ is not a giant improves the success rate substantially: 
\[\gamma(W(E_6),0.75,3) \approx 1.3\% \quad \xrightarrow{\text{group error detection}} \quad \gamma_{\text{non-giant}}(W(E_6),0.75,3) \approx 87\%.\] 
\begin{figure}[!htpb]
\includegraphics[scale=0.3]{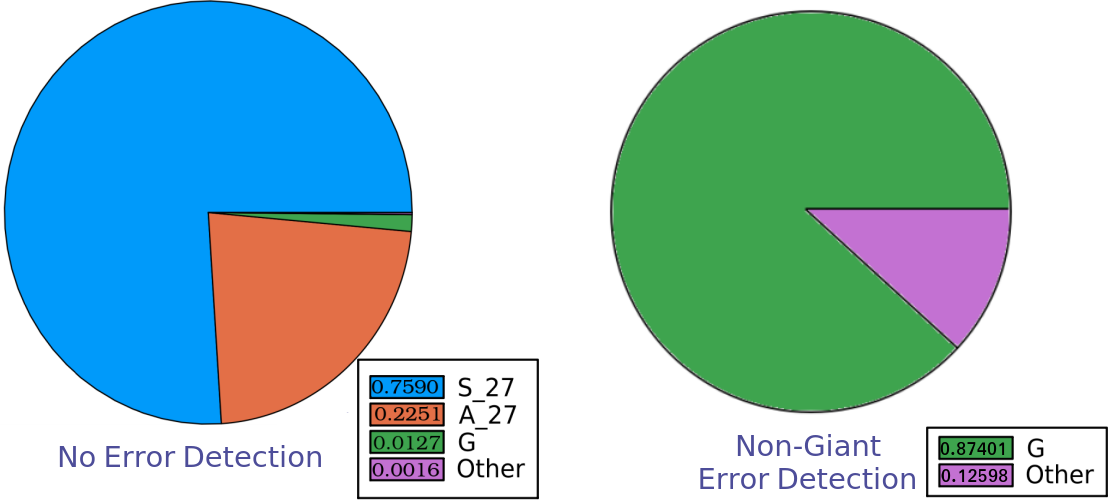}
\caption{Results of  \autoref{alg:naivealgorithm} (left) and \autoref{alg:errordetectinggrouprecovery} (right) on $10,000$ runs with  $G=W(E_6)$, $p=0.75$, and $k=3$ and $\mathcal Q\colon $ non-giant. }
\label{fig:piechart}
\end{figure}

The expected number of calls to \autoref{alg:naivealgorithm} required to obtain one output of \autoref{alg:errordetectinggrouprecovery} is $\frac{1}{0.0127+0.0016} \approx 70$. Remarkably, $G$ can be recovered frequently despite the large error rate $p=0.75$.
\end{example}

\subsection{NiAGRA: Amplifying  \autoref{alg:naivealgorithm}} 
\label{secsec:amplified_naive}
For sufficiently small $p$ and large $k$,  \autoref{alg:naivealgorithm} is far more likely to return $G$ than any other group. Thus, repeating \autoref{alg:naivealgorithm} several times and returning the mode of the results improves its success rate. This is a well-known procedure in randomized algorithms called \mydef{amplification}. For ease of analysis, we assume that the  algorithm succeeds with probability at least $\frac 1 2$. Combining \autoref{alg:naivealgorithm} with both error-detector methods, and amplification, results in the fundamental algorithm underlying our full pipeline: \textbf{NiAGRA}. 

\begin{algorithm}[!htpb]
  \SetKwIF{If}{ElseIf}{Else}{if}{then}{elif}{else}{}%
  \DontPrintSemicolon
  \SetKwProg{Naive Amplified Group Recovery Algorithm}{Naive Amplified Group Recovery Algorithm}{}{}
  \LinesNotNumbered
  \KwIn{\\ $\bullet$ The ability to sample from the distribution of $X_{\mathcal P}(G,p)$ (note: $\mathcal P$ may be trivial) \\  $\bullet$ A sample size $k$  \\
  $\bullet$ An amplification parameter $N \in \mathbb{N}$\\
  $\bullet$ A group error detector $\mathcal Q$ (note: $\mathcal Q$ may be trivial)\\
  \textbf{Assume:}  $\gamma_{\mathcal P,\mathcal Q}(G,p,k)=\frac{1}{2}+\delta$ for $\delta>0$ \\} 
  \KwOut{$G$} 
  \nl Run \autoref{alg:errordetectinggrouprecovery}   $N$ times on $(X_{\mathcal P}, k,\mathcal Q)$  to obtain $G_1,\ldots,G_N$ and \textbf{return} the mode. \;
  \caption{\textbf{N}a\textbf{i}ve \textbf{A}mplified  \textbf{G}roup \textbf{R}ecovery \textbf{A}lgorithm (\textbf{NiAGRA})\label{alg:amplifiednaivegrouprecovery}}
\end{algorithm}

\begin{lemma}
\label{lem:amplificationbounds}
Suppose $\gamma_{\mathcal P,\mathcal Q}(G,p,k)=\frac{1}{2}+\delta$ for $\delta>0$. Then for \mydef{amplification parameter} $N$, 
\[\Pr(\textbf{NiAGRA} \text{ returns }G) \geq 1-e^{-2\delta^2N}.
\] For
$N \geq \mydef{N(\alpha,\delta)} = -\frac{\log(\alpha)}{{2\delta^2}},
$
\textbf{NiAGRA} recovers $G$ with probability at least $1-\alpha$. 
\end{lemma}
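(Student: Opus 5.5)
The plan is to reduce the mode event to a binomial tail bound and then apply Hoeffding's inequality. The $N$ runs of \autoref{alg:errordetectinggrouprecovery} on $(X_{\mathcal P},k,\mathcal Q)$ use fresh, independent observations. Their outputs $G_1,\ldots,G_N$ are therefore i.i.d. random permutation groups with $\Pr(G_i = G) = \gamma_{\mathcal P,\mathcal Q}(G,p,k) = \frac 1 2 + \delta$.

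Let $W_i = \mathbb{I}_{G_i = G}$. These are i.i.d. Bernoulli variables with mean $\frac12+\delta$, and we set $S = \sum_{i=1}^N W_i$. The key observation is a sufficient condition for correctness: if $S > N/2$, then $G$ occurs strictly more than half the time. Every other group then occurs fewer than $N/2$ times, so $G$ is the unique mode and \textbf{NiAGRA} returns $G$ no matter how ties among other groups are broken. Hence
\[
\Pr(\textbf{NiAGRA}\text{ returns }G) \geq \Pr(S > N/2) = 1 - \Pr(S \leq N/2).
\]

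Next I would bound the failure probability with Hoeffding's inequality for $[0,1]$-valued i.i.d. variables, $\Pr(S - \mathbb{E}S \leq -tN) \leq e^{-2t^2N}$. Since $\mathbb{E}S = N(\frac12+\delta)$, the event $S \leq N/2$ is exactly $S - \mathbb{E}S \leq -\delta N$. Taking $t=\delta$ gives $\Pr(S\le N/2)\le e^{-2\delta^2N}$, which is the first claim.

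For the second claim, I would solve $e^{-2\delta^2 N} \leq \alpha$ for $N$. This gives $N \geq -\log(\alpha)/(2\delta^2) = N(\alpha,\delta)$, and monotonicity of the bound in $N$ finishes the argument.

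The only delicate point is the sufficient condition in the second paragraph. The mode is not well defined under ties, and the other outputs may be spread over many distinct groups. Both issues are avoided by requiring a strict majority, which forces $G$ to be the unique mode. One should also confirm that independence across runs holds even though each run of \autoref{alg:errordetectinggrouprecovery} uses a random number of samples; it does, because every run draws fresh samples and its stopping rule depends only on those samples.
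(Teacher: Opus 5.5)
Your proposal is correct and follows the paper's proof essentially verbatim: indicator variables for $G_i = G$, a binomial count, the strict-majority sufficient condition for $G$ to be the mode, the one-sided Hoeffding bound with $t=\delta$, and then solving $e^{-2\delta^2 N}\le \alpha$ for $N$. You bound $\Pr(S \le N/2)$, whereas the paper writes $\Pr(A < N/2)$; yours is slightly more careful, because a tie at exactly $N/2$ is a genuine failure case, and the one-sided Hoeffding inequality still covers it.
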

\begin{proof}
Let $G_1,\ldots,G_N$ be the $N$ observations of the output of \autoref{alg:errordetectinggrouprecovery}. \textbf{NiAGRA} returns $G$ if more than $N/2$ of these groups are $G$. Write $A$ for the random variable $A_1+\cdots+A_N$ where $A_i\sim \textrm{Ber}(\gamma_{\mathcal P, \mathcal Q}(G,p,k))$ are \iid Bernoulli random variables which are $1$ if $G_i=G$ and $0$ otherwise. Then $A$ has a binomial distribution $\textrm{Bin}(\gamma_{\mathcal P, \mathcal Q}(G,p,k),N)$. The $N/2$ tail of a binomial distribution is bounded via \mydef{Hoeffding's inequality} \cite{Hoeffding}:
$
\Pr(\textbf{NiAGRA} \text{ fails})  \leq \Pr(A < N/2) \leq e^{-2\delta^2N}.
$ This upper bound on the failure rate is at most $\alpha$ when $N \geq N(\alpha,\delta)$. 
\end{proof}

\begin{example}[Amplification and the problem of $27$ lines]
\label{ex:27LinesContinued}
Continuing \autoref{ex:27lines}, \autoref{lem:amplificationbounds} states that for $(k,p) = (6,0.01)$,  \textbf{NiAGRA} recovers $W(E_6)$ more than $99.9\%$ of the time, using  $N=\lceil N(0.001,.93055-0.5) \rceil = \lceil 18.68\rceil = 19$, with no error detectors.  However, $N=7$ suffices: 
\[
\Pr(\textrm{output of }\textbf{NiAGRA} \textrm{ is not }W(E_6)) = {\footnotesize{\sum_{i=0}^{4} {{7}\choose{i}}\gamma(G,p,k)^i(1-\gamma(G,p,k))^{7-i} \approx 0.000685}}\]

\begin{figure}[!htpb]
\includegraphics[scale=0.48]{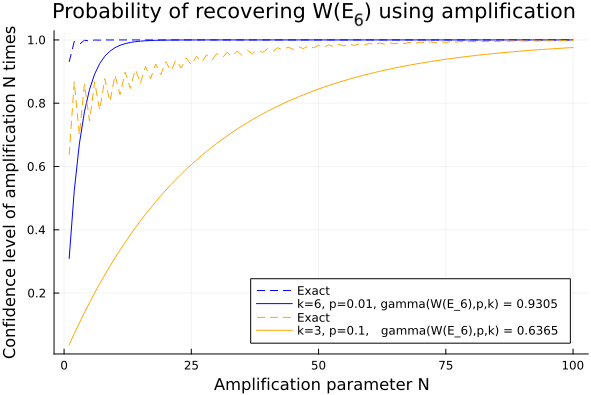}
\caption{True success rates of \textbf{NiAGRA} on $G=W(E_6)$ versus bounds of \autoref{lem:amplificationbounds}. }
\label{fig:amplifiedE6}
\end{figure}

 \autoref{fig:amplifiedE6} shows the lower bound on the probability of success of amplification on $G=W(E_6)$ with $p=0.01$ and the \textit{optimal} choice of $k=6$, along with the analogous chart for $p=0.1$ and $k=3$. We plot these values along with the true probability  that the mode of $N$ runs of  \autoref{alg:naivealgorithm} is $G$.

Requiring that $\gamma_{\mathcal P, \mathcal Q}(G,p,k) \geq 1/2$ is a conservative sufficient requirement for amplification to succeed. Indeed, as long as $G$ appears \textit{more often} than any other group, amplification works. This difference depends intimately on the structure of $G$ and we do not analyze it. Rather, we showcase the empirical values of amplification success in  \autoref{fig:heatmap} on $G=W(E_6)$. The first row of each heatmap indicates that $W(E_6)$ cannot be generated by a single element. The first column of each heatmap reflects the empirical values of $\gamma(W(E_6),p,k)$. When $\gamma(W(E_6),p,k)$ is larger than the probability that any other subgroup of $S_{27}$ is returned by \autoref{alg:naivealgorithm}, the corresponding row is increasing with respect to $N$. The last $(p,k)$ value for which this occurs is $(p,k)=(0.33,2)$.
\end{example}

\begin{figure}[!htpb]
\includegraphics[scale=0.42]{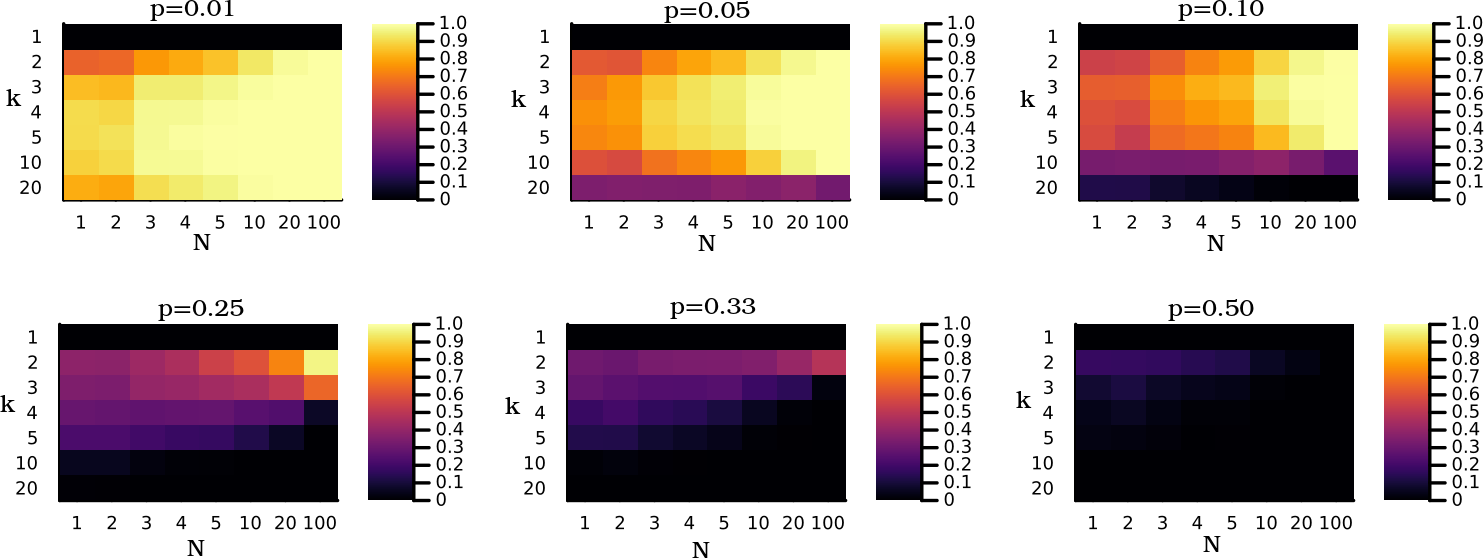}
\caption{Heatmaps of success rates of \textbf{NiAGRA} on $G=W(E_6)$ as $p$, $k$, and $N$ vary. }
\label{fig:heatmap}
\end{figure}

Although we do not have access to the value $\gamma_{\mathcal P, \mathcal Q}(G,p,k)$, we often have access to bounds $\widetilde{p}$, $\widetilde{M}$, and $\widetilde{B}$ on  $p$, $\lceil\log_2(|G|)\rceil$, and $\frac{|S_{\mathcal P}|}{n!}$, which we may use to bound 
$\gamma_{\mathcal P, \mathcal Q}(G,p,k)$:
\begin{align*}\gamma_{\mathcal P, \mathcal Q}(G,p,k) \geq \gamma_{\mathcal P}(G,p,k) &\geq (1-q_{\mathcal P})^k \left(1-\frac{8}{2^{k-\lceil \log_2(|G|)\rceil}} \right) \\ &\geq \left(1-\frac{2\widetilde{p}}{B+B^{-1}}\right)^k\left(1-\frac{8}{2^{k-\widetilde{M}}}\right).
\end{align*}
Checking if the right-hand-side is at least $\frac{1}{2}$ is called a \mydef{success rate check}.

\begin{algorithm}[]
  \SetKwIF{If}{ElseIf}{Else}{if}{then}{elif}{else}{}%
  \DontPrintSemicolon
  \SetKwProg{Success Rate Check}{Success Rate Check}{}{}
  \LinesNotNumbered
  \KwIn{$(\widetilde{p},\widetilde{M},\widetilde{B},k)$ \\ $\bullet$ An upper bound $\widetilde{p}$ on the error rate $p$ \\  $\bullet$ An upper bound $\widetilde{M}$ on $\lceil \log_2(|G|) \rceil$ \\ $\bullet$ an upper bound on the proportion $\frac{|S_{\mathcal P}|}{n!}$ of permutations in $S_n$ which satisfy $\mathcal P$. \\ $\bullet$ the number $k$ of samples used in \autoref{alg:naivealgorithm}  }
  \KwOut{A lower bound on $\gamma_{\mathcal P, \mathcal Q}(G,p,k)$.} 
  \nl \textbf{return} $\left(1-\frac{2\widetilde{p}}{B+B^{-1}}\right)^k\left(1-\frac{8}{2^{k-\widetilde{M}}}\right)$
  \caption{Success Rate Check\label{alg:successratecheck}}
\end{algorithm}

\section{Property Recovery}
\label{sec:property_recovery}
In this section, we list several tests for extracting information about $G$ from $X=X(G,p)$ or some $X_{\mathcal P}$. Each test determines a property, $\mathcal P$ or $\mathcal Q$, for sample error detection or group error detection, respectively. Additional information, like bounds for $p$, $|G|$, or the proportion $B$ of permutations which satisfy $\mathcal P$, can sometimes be obtained as well.

\subsection{The Giant Test} \label{secsec:giant_test} The \mydef{giant test} is a two-sided randomized  algorithm, or hypothesis test, for determining if $G$ is a giant. It relies on approximating the expected value of the random variable \[
\mydef{\mathcal G(X_1,X_2)} = \langle X_1,X_2 \rangle \text{ is a giant}.
\] The expected value of this Bernoulli random variable differs greatly depending on whether $G$ is a giant or not.  The main technical result of this subsection pinpoints when these probabilities separate, enabling us to use basic statistical hypothesis testing to establish if $G$ is a giant.

\begin{theorem}
\label{thm:giant_bounds}
Let $G \leq S_n$ and let $\ell(n)$ and $u(n)$ be the  bounds of \autoref{prop:morganroneydougal}.
If $G$ is a giant, then the probability that $\langle X_1,X_2 \rangle$ is a giant is bounded by
\[
\mydef{\mathcal L(n,p)} = (1-p+p^2)\ell(n) <\Pr(\mathcal G(X_1,X_2)) < (1-p+p^2)u(n)+p(1-p).
\]
If $G$ is not a giant, then the probability that $\langle X_1,X_2 \rangle$ is a giant is bounded by 
\[
p^2 \ell(n) \leq \Pr(\mathcal G(X_1,X_2)) \leq 2p(1-p)+p^2u(n) = \mydef{\mathcal U(n,p)}.
\]
The quantity $\mathcal U(n,p)$ is less than $\mathcal L(n,p)$ whenever 
\[
p<\mydef{b_n} = \frac{{3n^2-n-8.8}-\sqrt{n^4+2n^3+50.08n^2-13.88n-199.584}}{4n^2-15.74} = 0.5- 0.5 \frac 1 n - 6.3675 \frac{1}{n^2}+O(n^{-3})
\]
which begins $b_5=0.214,b_{10}=0.395, b_{50} = 0.487$ and $\lim_{n \to \infty} b_n = 0.5$.
\end{theorem}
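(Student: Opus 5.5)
Condition on the latent Bernoulli variables. Each $X_i$ is an observation of $Y=U_G$ with probability $1-p$ and of $Z=U_{S_n}$ with probability $p$, independently. This splits the event $\mathcal G(X_1,X_2)$ into four cases: both from $G$ (probability $(1-p)^2$), exactly one from $S_n$ (total probability $2p(1-p)$), and both from $S_n$ (probability $p^2$). Write $g_{YY},g_{YZ},g_{ZZ}$ for the conditional probabilities that the two elements generate a giant. Then
\[
\Pr(\mathcal G(X_1,X_2)) = (1-p)^2 g_{YY} + 2p(1-p) g_{YZ} + p^2 g_{ZZ}.
\]
By \autoref{prop:morganroneydougal} applied to $S_n$ (for $n\ge 5$), $\ell(n)< g_{ZZ}< u(n)$.

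\emph{Case $G$ a giant.} By \autoref{prop:morganroneydougal} applied to $G$ itself, $\ell(n)<g_{YY}<u(n)$. For the mixed term, let $\sigma\in G$ and $\tau\in S_n$ be uniform. If $G=S_n$, then $g_{YZ}=g_{ZZ}\in(\ell(n),u(n))$. If $G=A_n$, condition on whether $\tau\in A_n$ (probability $\tfrac12$). If $\tau\in A_n$, the pair is a uniform pair from $A_n$. If $\tau\notin A_n$, then $\langle\sigma,\tau\rangle$ is a giant exactly when $\langle\sigma,\tau\rangle\supseteq A_n$. I would bound this crudely: the lower bound only needs $g_{YZ}\ge 0$, and the upper bound only needs $g_{YZ}\le 1$. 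This yields
\[
\Pr(\mathcal G) > \bigl((1-p)^2+p^2\bigr)\ell(n) = (1-2p+2p^2)\ell(n),
\]
which is weaker than the stated $(1-p+p^2)\ell(n)$. To reach $1-p+p^2 = (1-p)^2+p(1-p)+p^2$, I need $g_{YZ}>\ell(n)/2$. For $G=S_n$ this is immediate. For $G=A_n$, the half of the mass with $\tau\in A_n$ gives at least $\tfrac12\ell(n)$, so I would take that route, so $2p(1-p)g_{YZ}\ge p(1-p)\ell(n)$. Symmetrically, for the upper bound I would use $g_{YZ}\le \tfrac12 u(n)+\tfrac12$, giving $2p(1-p)g_{YZ}\le p(1-p)u(n)+p(1-p)$. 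Adding the terms gives exactly $(1-p+p^2)u(n)+p(1-p)$. The same bounds hold for $G=S_n$ since $\ell(n)/2\le g_{YZ}\le u(n)\le u(n)/2+1/2$.

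\emph{Case $G$ not a giant.} Here $g_{YY}=0$, because $\langle\sigma_1,\sigma_2\rangle\le G$ is not a giant. Using $0\le g_{YZ}\le 1$ and the bounds on $g_{ZZ}$ gives
\[
p^2\ell(n)\le \Pr(\mathcal G)\le 2p(1-p)+p^2u(n)=\mathcal U(n,p).
\]

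\emph{Separation.} The inequality $\mathcal U(n,p)<\mathcal L(n,p)$ is
\[
2p-2p^2+p^2u(n) < (1-p+p^2)\ell(n).
\]
This is a quadratic in $p$:
\[
\bigl(\ell(n)-u(n)+2\bigr)p^2-\bigl(\ell(n)+2\bigr)p+\ell(n)>0.
\]
I would substitute $\ell(n)=1-\tfrac1n-\tfrac{8.8}{n^2}$ and $u(n)=1-\tfrac1n-\tfrac{0.93}{n^2}$ and multiply through by $n^2$. The coefficients become $2n^2-7.87$, $-(3n^2-n-8.8)$, and $n^2-n-8.8$. The quadratic is positive below its smaller root, which is
\[
\frac{3n^2-n-8.8-\sqrt{(3n^2-n-8.8)^2-4(2n^2-7.87)(n^2-n-8.8)}}{2(2n^2-7.87)}.
\]
Expanding the discriminant should give $n^4+2n^3+50.08n^2-13.88n-199.584$, which matches $b_n$. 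The asymptotic expansion and the listed values of $b_n$ then follow from a series expansion and direct evaluation.

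I expect the main obstacle to be the mixed term for $G=A_n$: justifying $g_{YZ}\ge \ell(n)/2$ with the correct uniform-on-$A_n$ conditioning. Everything else is bookkeeping.
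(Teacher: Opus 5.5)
Your proposal is correct and follows essentially the same argument as the paper. Both use the four-way decomposition over the latent Bernoulli choices and condition on $Z\in A_n$ to get $\tfrac12\ell(n)\le \Pr(\mathcal G(Y_1,Z_1))\le \tfrac12 u(n)+\tfrac12$ when $G=A_n$, bound the mixed term crudely in $[0,1]$ in the non-giant case, and solve the same quadratic $(2n^2-7.87)p^2-(3n^2-n-8.8)p+(n^2-n-8.8)>0$, whose discriminant does expand to $n^4+2n^3+50.08n^2-13.88n-199.584$.
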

\begin{proof} If $G$ is a giant then
\[
\Pr(\mathcal G(X_1,X_2)) = (1-p)^2\Pr(\mathcal G(Y_1,Y_2))+2p(1-p)\Pr(\mathcal G(Y_1,Z_1))+ p^2\Pr(\mathcal G(Z_1,Z_2)) 
\]
The probabilities $\Pr(\mathcal G(Y_1,Y_2))$ and $\Pr(\mathcal G(Z_1,Z_2))$ are both bounded below by $\ell(n)$ and above by $u(n)$. The remaining probability $\textrm{Pr}(\mathcal G(Y_1,Z_1))$ is similarly bounded  if $G = S_n$. If $G = A_n$ however, 
\begin{align*}
\Pr(\mathcal G(Y_1,Z_1)) &= \Pr(Z \in A_n)\cdot \Pr(\mathcal G(Y_1,Y_2))+ \Pr(Z \not\in A_n) \cdot \Pr(\mathcal G( U_{A_n},U_{S_n-A_n})) \\
&= \frac 1 2 \Pr(\mathcal G(Y_1,Y_2))+ \frac 1 2 \cdot \Pr(\mathcal G( U_{A_n},U_{S_n-A_n}))
\end{align*}
Bounding the second summand within the interval $[0,1/2]$ we obtain the lower bound
\[
\Pr(\mathcal G(X_1,X_2)) > (1-p)^2\ell(n)+2p(1-p)\frac 1 2 \ell(n) + p^2 \ell(n) = (p^2-p+1)\ell(n)
\]
and upper bound
\[(1-p)^2u(n)+2p(1-p)\left(\frac 1 2 u(n)+\frac 1 2\right) + p^2 u(n) = (p^2-p+1)u(n)+p(1-p).
\]
If $G$ is not a giant, then 
\[\Pr(\mathcal G(Y_1,Y_2))=0 \quad \quad \Pr(\mathcal G(Y_1,Z_1)) \in (0,1), \text{ and }\quad \quad \Pr(\mathcal G(Z_1,Z_2)) \in (\ell(n),u(n)).\] Applying a similar decomposition as before to the mixture model $X$ we obtain the bounds 
\[p^2 \ell(n) \leq \Pr(\mathcal G(X_1,X_2)) \leq 2p(1-p)+p^2u(n).\]
Setting the former lower bound equal to the latter upper bound gives a quadratic formula in $p$, for which, $b_n$ is the solution. 
\end{proof}

\autoref{thm:giant_bounds} enables us to distinguish whether $G$ is a giant by estimating the expected value of the Bernoulli random variable $\mathcal G(X_1,X_2)$. By doing so, we may distinguish if $\mathbb{E}(\mathcal G(X_1,X_2))>\mathcal L(n,p)$ or $\mathbb{E}(\mathcal G(X_1,X_2))<\mathcal U(n,p)$. Since the value of $p$ is unknown \emph{a priori}, we must choose some bound $\tilde{p}$ for which $p< \tilde{p}$  and conservatively decide whether $\mathbb{E}(\mathcal G(X_1,X_2))>\mathcal L(n,\tilde{p})$ or $\mathbb{E}(\mathcal G(X_1,X_2)) < \mathcal U(n,\tilde{p})$. There are many statistical methods for performing this task. For example, one may choose sufficiently many samples $N$ to approximate the expected value to a desired $\delta$-radius. In practice, it makes sense to use the \emph{sequential probability ratio test} \cite{SPRT}. To keep things theoretically simple, we use a fixed sample size hypothesis test and obtain bounds on our confidence level using  \emph{Hoeffding's inequality}, which we already saw in \autoref{lem:amplificationbounds}. We refer to it as the \textit{mean-threshold distinguisher}.

\begin{lemma}[Hoeffding]
\label{lem:hoeffding_distinguisher}
Suppose $X$ is a random variable taking values in $[0,1]$, and let $0 \le a < b \le 1$
such that either $\mathbb{E}(X)\le a$ or $\mathbb{E}(X)\ge b$.
Let $X_1,\dots,X_N$ be i.i.d. copies of $X$, define
\[
\widetilde{X} = X_1+\cdots+X_N, \qquad
c = \frac{a+b}{2}, \qquad
\delta = \frac{b-a}{2}.
\]
If $\mathbb{E}(X)\ge b$, then
\[
\Pr(\widetilde{X} < cN)
=
\Pr\left(\mathbb{E}(\widetilde{X}) - \widetilde{X}
\ge (\mathbb{E}(X)-c)N\right)
\le
\Pr\left(\mathbb{E}(\widetilde{X}) - \widetilde{X}
\ge \delta N\right)
\le
e^{-2\delta^2 N}.
\]
Similarly, if $\mathbb{E}(X)\le a$, then
$
\Pr(\widetilde{X} > cN)
\le
e^{-2\delta^2 N}.
$
In particular, the \mydef{mean-threshold distinguisher}
\[
\mydef{\mathrm{Distinguisher}_{N,c}}(X)
=
\begin{cases}
``\mathbb{E}(X)\le a" & \widetilde{X} \le cN, \\[4pt]
``\mathbb{E}(X)\ge b" & \widetilde{X} \ge cN
\end{cases}
\]
is incorrect with probability at most $e^{-2\delta^2 N}$.
This error probability is bounded above by $\alpha$ whenever
\[
N \ge N(\alpha,\delta)
= -\frac{\log(\alpha)}{2\delta^2}
= -\frac{2\log(\alpha)}{(b-a)^2}.
\]
\end{lemma}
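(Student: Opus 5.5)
The plan is to reduce everything to the standard one-sided Hoeffding inequality for sums of independent $[0,1]$-valued random variables: for $t>0$,
\[
\Pr\bigl(\mathbb{E}(\widetilde{X})-\widetilde{X}\ge tN\bigr)\le e^{-2t^2N}
\qquad\text{and}\qquad
\Pr\bigl(\widetilde{X}-\mathbb{E}(\widetilde{X})\ge tN\bigr)\le e^{-2t^2N}.
\]
We use it in the same way as in the proof of \autoref{lem:amplificationbounds}. Since $X_1,\dots,X_N$ are i.i.d.\ copies of $X$, we have $\mathbb{E}(\widetilde{X})=N\,\mathbb{E}(X)$, and all the work is in rearranging the events so that a deviation of size at least $\delta N$ appears.

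For the case $\mathbb{E}(X)\ge b$, first rewrite the event. The condition $\widetilde{X}<cN$ is equivalent to
\[
\mathbb{E}(\widetilde{X})-\widetilde{X} > N\,\mathbb{E}(X)-cN = (\mathbb{E}(X)-c)N,
\]
which gives the first displayed equality up to the harmless replacement of $>$ by $\ge$; this only enlarges the event. Because $\mathbb{E}(X)\ge b$, we get $\mathbb{E}(X)-c\ge b-c=\delta>0$, so this event is contained in $\{\mathbb{E}(\widetilde{X})-\widetilde{X}\ge\delta N\}$, giving the second inequality. Hoeffding with $t=\delta$ then bounds the probability by $e^{-2\delta^2N}$. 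The case $\mathbb{E}(X)\le a$ is symmetric: $\widetilde{X}>cN$ implies $\widetilde{X}-\mathbb{E}(\widetilde{X})\ge (c-\mathbb{E}(X))N\ge (c-a)N=\delta N$, and the upper-tail Hoeffding bound applies.

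For the distinguisher, note that it errs only if the true alternative is $\mathbb{E}(X)\ge b$ but $\widetilde{X}<cN$, or the true alternative is $\mathbb{E}(X)\le a$ but $\widetilde{X}>cN$. The tie $\widetilde{X}=cN$ appears in both branches of the definition, so we fix a convention that resolves it in favour of the true side. More carefully, we simply declare the tie to be an error and bound it by the non-strict versions of the events above, which the same argument handles since $\ge\delta N$ is already used. Only one alternative holds, so the error probability is at most $e^{-2\delta^2N}$.

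Finally, $e^{-2\delta^2N}\le\alpha$ is equivalent to $N\ge -\log(\alpha)/(2\delta^2)$, and substituting $\delta=(b-a)/2$ gives $-2\log(\alpha)/(b-a)^2$. The only potential obstacle is the boundary and tie bookkeeping, specifically the strict versus non-strict inequalities at $cN$. It is resolved by noting that the events $\{\widetilde{X}\le cN\}$ and $\{\widetilde{X}\ge cN\}$ still imply deviations of at least $\delta N$ in the appropriate direction.
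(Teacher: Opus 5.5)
Your proof is correct and follows the paper's route: the paper simply invokes Hoeffding's inequality for a one-sided deviation of at least $\delta N$ and then solves $e^{-2\delta^2N}\le\alpha$ for $N$, which is exactly your argument with more bookkeeping. That bookkeeping is sound. The first displayed ``equality'' in the statement really holds only as $\le$, since the two events differ by $\{\widetilde X = cN\}$. Bounding the non-strict events $\{\widetilde X\le cN\}$ and $\{\widetilde X\ge cN\}$, as you do, correctly covers the tie that appears in both branches of the distinguisher's definition.
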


\begin{proof}
The statement bounding the probability that $\widetilde{X}$ deviates from its expected value by more than $\delta N$ is {Hoeffding's inequality} \cite{Hoeffding}. The bound $N(\alpha,\delta)$ is obtained by solving for $N$ in $e^{-2\delta^2N} \leq \alpha$. 
\end{proof}

\begin{algorithm}[!htpb]
  \SetKwIF{If}{ElseIf}{Else}{if}{then}{elif}{else}{}%
  \DontPrintSemicolon
  \SetKwProg{Giant Test}{Giant Test}{}{}
  \LinesNotNumbered
  \KwIn{$(X,N)$ \\ $\bullet$ The ability to sample from the distribution of $X$ \\ $\bullet$ A number $N$ of Bernoulli trials \\ }
  \KwOut{$\texttt{true}$ if $G$ is a giant and $\texttt{false}$ otherwise}
  \nl Set $c = \frac{\mathcal L(n,\widetilde{p})+\mathcal U(n,\widetilde{p})}{2}$ for any $p<\widetilde{p}<b_n$.\;
  \nl  Distinguish if $\mathbb{E}(\mathcal G(X_1,X_2))>c$ using the mean-threshold distinguisher.\;
  \nl \textbf{return} \texttt{true} if $\mathbb{E}(\mathcal G(X_1,X_2))>c$ and \texttt{false} otherwise.
  \caption{Giant Test\label{alg:gianttest}}
\end{algorithm}

\begin{theorem}
\autoref{alg:gianttest} determines correctly whether $G$ is a giant with probability at least $1-\alpha$ as long as $p<\widetilde{p}<b_n$ and $N> N\left(\alpha,\frac{\mathcal L(n,\widetilde{p})-\mathcal U(n,\widetilde{p})}{2}\right)$.
\end{theorem}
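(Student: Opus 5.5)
The plan is to reduce the theorem to \autoref{lem:hoeffding_distinguisher}, applied to the Bernoulli variable $\mathcal G(X_1,X_2)$, with thresholds $a=\mathcal U(n,\widetilde p)$ and $b=\mathcal L(n,\widetilde p)$. The first task is to check the dichotomy hypothesis of that lemma: either $\mathbb E(\mathcal G(X_1,X_2))\le a$ or $\mathbb E(\mathcal G(X_1,X_2))\ge b$, with $a<b$.

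By \autoref{thm:giant_bounds}, if $G$ is a giant then $\mathbb E(\mathcal G)>\mathcal L(n,p)$, and if $G$ is not a giant then $\mathbb E(\mathcal G)\le \mathcal U(n,p)$. We then need these bounds at the true $p$ to compare correctly with the bounds at $\widetilde p$.

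\begin{itemize}
\item $\mathcal L(n,p)=(1-p+p^2)\ell(n)$ is decreasing in $p$ on $[0,\tfrac12]$, since $1-p+p^2$ has derivative $2p-1\le 0$ there. Hence $\mathcal L(n,p)\ge \mathcal L(n,\widetilde p)$ when $p<\widetilde p<b_n<\tfrac12$.
\item $\mathcal U(n,p)=2p-2p^2+p^2u(n)$ has derivative $2-4p+2pu(n)$. Since $u(n)<1$ and $p<\tfrac12$, this derivative is at least $2-4p>0$, so $\mathcal U$ is increasing. Hence $\mathcal U(n,p)\le \mathcal U(n,\widetilde p)$.
\end{itemize}

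Finally, $\widetilde p<b_n$ together with the last claim of \autoref{thm:giant_bounds} gives $\mathcal U(n,\widetilde p)<\mathcal L(n,\widetilde p)$, so $a<b$. I expect the monotonicity step to be the only place requiring care. In particular, I must confirm that $b_n<\tfrac12$ for all relevant $n$, which the stated expansion and the listed values suggest. I must also note that the $p$-dependence at the true $p$ is exactly what \autoref{thm:giant_bounds} provides.

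With the hypothesis verified, the algorithm sets $c=(a+b)/2$ and returns \texttt{true} exactly when the distinguisher outputs ``$\mathbb E\ge b$''. The half-gap is $\delta=(\mathcal L(n,\widetilde p)-\mathcal U(n,\widetilde p))/2$. \autoref{lem:hoeffding_distinguisher} then bounds the error probability by $e^{-2\delta^2N}$, and this is at most $\alpha$ once $N>N(\alpha,\delta)$.

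One bookkeeping point remains. Each Bernoulli trial uses a fresh, independent pair $(X_1,X_2)$, so the $N$ trials are i.i.d. copies of $\mathcal G(X_1,X_2)$, as Hoeffding's inequality requires. The tie case $\widetilde X=cN$ does not affect the bound, because each tail in the lemma is bounded separately.
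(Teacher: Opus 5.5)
Your proposal is correct and takes the paper's route: it feeds the separation of \autoref{thm:giant_bounds} into the mean-threshold distinguisher of \autoref{lem:hoeffding_distinguisher}, and it usefully makes explicit the monotonicity of $\mathcal L(n,\cdot)$ and $\mathcal U(n,\cdot)$ on $[0,\tfrac12]$, which the paper only records as $\mathcal U(n,p)<\mathcal U(n,\widetilde p)<\mathcal L(n,\widetilde p)<\mathcal L(n,p)$ in \autoref{ex:giantweyl}. Two small repairs are needed: the bound $2-4p+2p\,u(n)\ge 2-4p$ holds because $u(n)\ge 0$, not because $u(n)<1$; and $b_n<\tfrac12$ follows rigorously because $\mathcal U-\mathcal L$ is increasing on $[0,\tfrac12]$, equals $-\ell(n)<0$ at $p=0$, and equals $\tfrac12+\tfrac{u(n)}{4}-\tfrac{3\ell(n)}{4}>\tfrac{1-\ell(n)}{2}>0$ at $p=\tfrac12$.
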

\begin{proof}
This directly follows from \autoref{thm:giant_bounds}, which guarantees that the bounds on the expected value of $\mathcal G(X_1,X_2)$ separate when $p<b_n$, along with \autoref{lem:hoeffding_distinguisher}.
\end{proof}

\begin{example}\label{ex:giantweyl} We continue our running example of $G=W(E_6) \leq S_{27}$ under the assumption that 
 $p<\frac 1 3= \tilde{p}$. Since $\frac 1 3 < b_{27} \approx 0.47$,  \autoref{thm:giant_bounds} states that 
\[\mathcal U(n,p)<\mathcal U(n,\tilde{p}) \approx 0.5512 < 0.7395 \approx \mathcal L(n,\tilde{p}) < \mathcal L(n,p).\]
See \autoref{fig:giant_means} for a visualization of these bounds.
We thus use  $c = \frac{0.5512+0.7395}{2} = 0.6454$ as a distinguishing threshold for $\mathcal G(X_1,X_2)$ applied to any permutation group in $S_{27}$. The value of $\delta$ is $0.09414$ and so to obtain a $99\%$ confidence level on the output of \autoref{alg:gianttest}, we may sample $N = \lceil N(0.01,0.09414)\rceil = 260$ observations of $\mathcal G(X_1,X_2)$.

\begin{figure}[!htpb]
\includegraphics[scale=0.18]{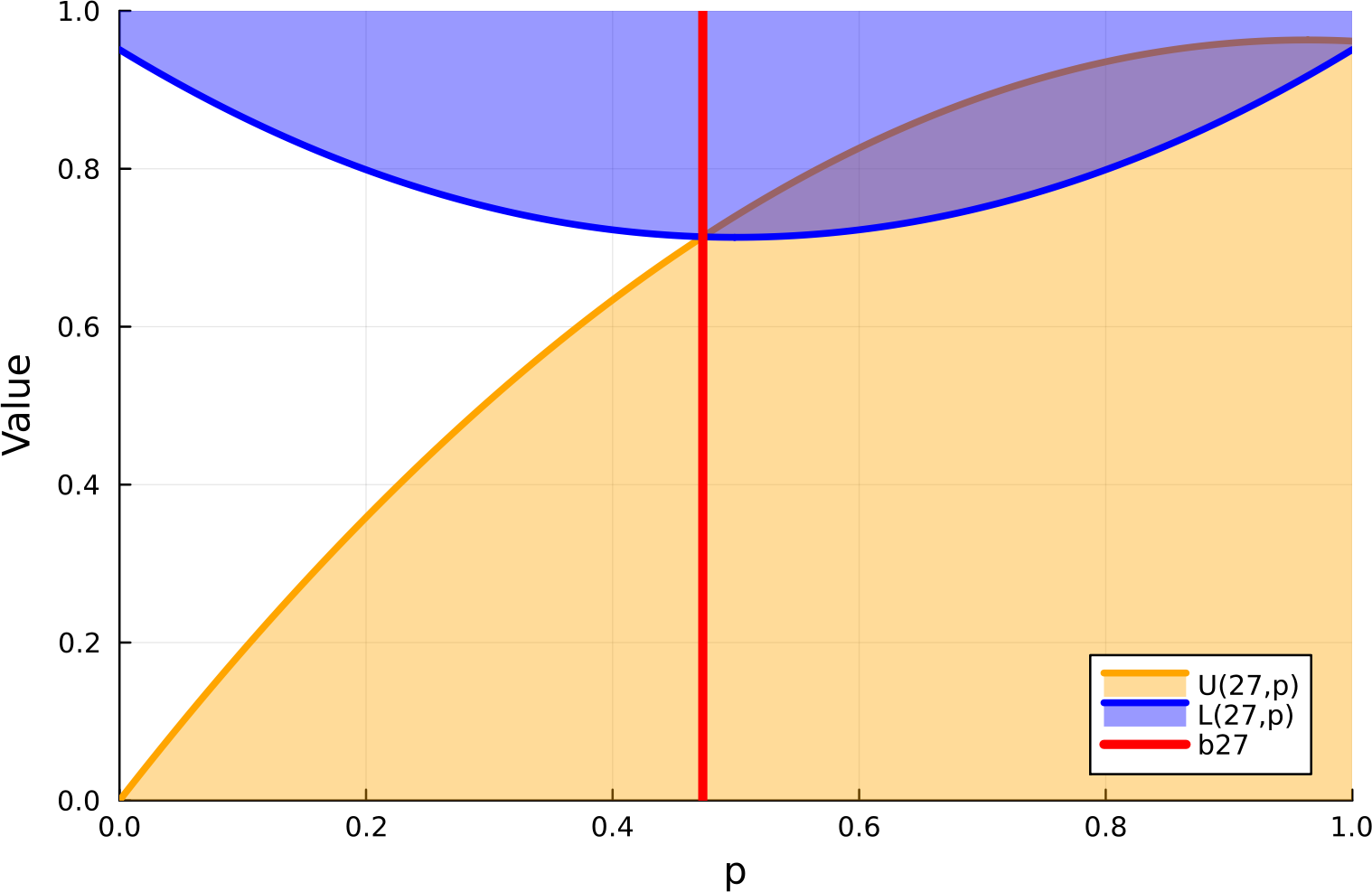}
\caption{A graph of the bound $\mathcal L(27,p)$ of the expected value of $\mathcal G(X_1,X_2)$ if $G$ is a giant (blue) and the bound $\mathcal U(27,p)$ of the expected value if $G$ is not a giant. They seperate for $p<b_{27}$.} 
\label{fig:giant_means}
\end{figure}

Running this algorithm on $p=\tilde{p} = \frac 1 3$ gives a proportion of $143/260 \approx .55$ of pairs which generate a giant, which is lower than the threshold $c$ and so \autoref{alg:gianttest} decides correctly that $G$ is not a giant. With confidence level $99\%$.

 Using $G=A_{27}$, we find that $259/260$ pairs generate a giant. Using $G=S_{27}$, this proportion became $260/260$. 
In terms of the computational cost of this test, we remark that  deciding whether $10000$ random pairs of permutations from $W(E_6)$ and $S_{27}$ generate a giant takes about $3.4$ and $3.5$ seconds, respectively,  in \texttt{GAP} \cite{GAP}.
\end{example}

\subsection{Subgroup Test} \label{secsec:subgroup_test}
The \mydef{subgroup test} determines if $G\leq S_{n}$ is a subgroup of some known group $H\leq S_n$  using only observations of $X$. 
Like the giant test, it works by estimating the expected value of a Bernoulli random variable:
\[\mydef{X \in H} = \begin{cases}
1 & X \in H \\
0 & X \not\in H
\end{cases}.
\] As in the giant test, the expected values of this  random variable separate depending on if $G \subseteq H$.

\begin{lemma}
\label{lem:subgroup_bound}
If $G \leq S_n$  and $H$ is a proper subgroup of $S_n$ then
\[
\mathbb{E}(X \in H) = \frac{(1-p)|G \cap H|}{|G|}+\frac{p|H|}{n!} = \begin{cases} 
(1-p)+\frac{p|H|}{n!} & G\leq H \\
\leq \frac{1-p}{2}+\frac{p|H|}{n!} \leq \frac{1}{2} & \text{otherwise}\\
\end{cases}
\]
In particular, these expected values differ by at least $\frac{1-p}{2}$.
\end{lemma}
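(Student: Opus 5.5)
The plan is to compute $\mathbb{E}(X \in H)=\Pr(X\in H)$ directly from the mixture decomposition of $X$ into $Y=U_G$ and $Z=U_{S_n}$, and then use Lagrange's theorem to split into the two cases.

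\textbf{Step 1 (the exact formula).} Condition on the latent Bernoulli variable. With probability $1-p$ the observation is $Y=U_G$, and
\[\Pr(Y\in H)=\frac{|G\cap H|}{|G|}.\]
With probability $p$ it is $Z=U_{S_n}$, and
\[\Pr(Z\in H)=\frac{|H|}{n!}.\]
Adding these with weights $1-p$ and $p$ gives
\[\mathbb{E}(X\in H)=\frac{(1-p)|G\cap H|}{|G|}+\frac{p|H|}{n!}.\]
Equivalently, one can sum the probability mass function \eqref{eq:pmfX} over $\sigma\in H$. The elements of $H\cap G$ contribute $(1-p)/|G|$ each, and every element of $H$ contributes $p/n!$.

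\textbf{Step 2 (the case analysis).} If $G\le H$, then $G\cap H=G$, so the first term equals $1-p$. Otherwise $G\cap H$ is a proper subgroup of $G$. By Lagrange's theorem, $|G\cap H|$ divides $|G|$ and is strictly smaller, so $|G\cap H|/|G|\le \tfrac12$. This gives the bound
\[\mathbb{E}(X\in H)\le \frac{1-p}{2}+\frac{p|H|}{n!}.\]
Next, $H$ is a proper subgroup of $S_n$, so Lagrange again gives $|H|/n!\le\tfrac12$. Hence
\[\frac{1-p}{2}+\frac{p|H|}{n!}\le \frac{1-p}{2}+\frac p2=\frac12.\]

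\textbf{Step 3 (the separation).} Subtracting the two cases, the $p|H|/n!$ terms cancel. The difference between the two expected values is therefore at least $(1-p)-\tfrac{1-p}{2}=\tfrac{1-p}{2}$.

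There is no real obstacle here. The only point needing care is to apply Lagrange to $G\cap H\le G$, rather than to $H\le S_n$, when bounding the first term.
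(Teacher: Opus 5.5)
Your proposal is correct and follows essentially the same route as the paper. Both write $\Pr(X\in H)=(1-p)[G:G\cap H]^{-1}+p[S_n:H]^{-1}$ via the mixture decomposition and apply Lagrange's theorem to $G\cap H\le G$ and to $H\lneq S_n$ to bound the two summands by $\frac{1-p}{2}$ and $\frac{p}{2}$. Your Step 3, which checks that the $\frac{p|H|}{n!}$ terms cancel to give the separation $\frac{1-p}{2}$, spells out a line the paper leaves implicit.
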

\begin{proof}
The probability that an observation of $U_G$ belongs to $H$ is $[G:G \cap H]^{-1}$ which is $1$ if $G \leq H$ and at most $\frac{1}{2}$ otherwise thanks to LaGrange's theorem. In particular,
\begin{align*}
\Pr(X \in H) &= (1-p)\Pr(Y \in H) + p\Pr(Z \in H)\\  &= (1-p)[G:G \cap H]^{-1} + p[S_n:H]^{-1}
\end{align*}
which is $(1-p)+\frac{p|H|}{n!}$ if $G\leq H$. Otherwise, the first summand  is at most $(1-p)/2$ by LaGrange's theorem, and similarly, the second summand is at most $p/2$.
\end{proof}

\begin{algorithm}[!htpb]
  \SetKwIF{If}{ElseIf}{Else}{if}{then}{elif}{else}{}%
  \DontPrintSemicolon
  \SetKwProg{Subgroup Test}{Subgroup Test}{}{}
  \LinesNotNumbered
  \KwIn{$(X,N,H)$ \\ $\bullet$ The ability to sample from the distribution of $X$ \\ $\bullet$ $N$ a number of samples \\ $\bullet$ A proper subgroup $H \leq S_n$ \\   }
  \KwOut{\texttt{true} if $G \leq H$ and \texttt{false} otherwise}
  \nl Set  $c = \frac 3 4 -\frac{\widetilde{p}(n!-|H|)}{2n!} $ for any $\widetilde{p}>p$.\;
  \nl Distinguish if $\mathbb{E}(X \in H)>c$  using the mean-threshold distinguisher on $N$ samples. \;
  \nl \textbf{return} \texttt{true} if $\mathbb{E}(X \in H) > c$ and \texttt{false} otherwise
  \caption{Subgroup Test\label{alg:subgrouptest}}
\end{algorithm}
\begin{theorem}\label{thm:subgroup_test} For any $G \leq S_n$ and $H \lneq S_n$, 
\autoref{alg:subgrouptest} correctly determines if $G \leq H$ with confidence $1-\alpha$ when $p<\widetilde{p}\leq \frac 1 2$ and $N > N\left(\alpha,\frac{1-2\widetilde{p}}{ 4}\right)$.
\end{theorem}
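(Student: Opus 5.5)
The plan is to reduce the statement to the mean-threshold distinguisher of \autoref{lem:hoeffding_distinguisher}, applied to the Bernoulli variable $X \in H$. The work is to exhibit numbers $a<b$ with two properties: $\mathbb{E}(X\in H)\le a$ when $G\not\le H$ and $\mathbb{E}(X\in H)\ge b$ when $G\le H$, and the threshold $c$ of \autoref{alg:subgrouptest} is exactly $\frac{a+b}{2}$. Then $\frac{b-a}{2}$ should be at least $\frac{1-2\widetilde p}{4}$. Throughout I write $h=|H|/n!$, which lies in $(0,1)$ because $H$ is a proper subgroup containing the identity.

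First, the case $G\le H$. By \autoref{lem:subgroup_bound}, $\mathbb{E}(X\in H)=1-p(1-h)$. This expression is decreasing in $p$, so $p<\widetilde p$ gives $\mathbb{E}(X\in H)> 1-\widetilde p(1-h)=:b$.

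Second, the case $G\not\le H$. \autoref{lem:subgroup_bound} gives $\mathbb{E}(X\in H)\le \tfrac12$, so I take $a=\tfrac12$. Now check the threshold: $\frac{a+b}{2}=\frac34-\frac{\widetilde p(1-h)}{2}$, which is exactly $c$ as defined in \autoref{alg:subgrouptest}.

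Third, the separation. We have $b-a=\tfrac12-\widetilde p(1-h)$. Since $\widetilde p\le\tfrac12$ and $h>0$, this is strictly positive, so $a<b$ and the distinguisher applies. Moreover $1-h\le 1$, so
\[
\delta=\frac{b-a}{2}=\frac{1-2\widetilde p(1-h)}{4}\ge \frac{1-2\widetilde p}{4}.
\]

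Finally, \autoref{lem:hoeffding_distinguisher} says the distinguisher errs with probability at most $e^{-2\delta^2N}$. Because $\delta\ge\frac{1-2\widetilde p}{4}$ and $N>N\!\left(\alpha,\frac{1-2\widetilde p}{4}\right)$, we get $e^{-2\delta^2N}\le e^{-2\left(\frac{1-2\widetilde p}{4}\right)^2N}\le\alpha$.

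There is no serious obstacle. The one point needing care is that the algorithm's $c$ is built from $\widetilde p$ rather than the unknown $p$. So the bounds $a$ and $b$ must be stated in terms of $\widetilde p$, with monotonicity in $p$ moving the true mean to the correct side, and one must confirm that $c$ is still the midpoint of those bounds. In the boundary case $\widetilde p=\tfrac12$ the hypothesis on $N$ cannot be met, so the statement is vacuous there.
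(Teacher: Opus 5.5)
Your proposal is correct and takes the same approach as the paper. Both use \autoref{lem:subgroup_bound} to get the low mean $a=\tfrac12$, then apply \autoref{lem:hoeffding_distinguisher} with $\delta=c-\tfrac12=\tfrac14-\tfrac{\widetilde p(1-|H|/n!)}{2}\ge\tfrac{1-2\widetilde p}{4}$. You additionally spell out two steps the paper's one-line proof leaves implicit: monotonicity in $p$ places the high mean above $b=1-\widetilde p(1-|H|/n!)$, and $c$ is exactly the midpoint of $a$ and $b$.
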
 
\begin{proof}
This follows from \autoref{lem:subgroup_bound} and \autoref{lem:hoeffding_distinguisher}, since the relevant $\delta$ is $c-\frac 1 2 = \frac 1 4 -\frac{\widetilde{p}(n!-|H|)}{2n!}$ which is greater than $\frac{1-2\widetilde{p}}{ 4}$.
\end{proof}

\noindent The subgroup test is versatile and can be specialized to the following settings.
\begin{itemize}
\item (The Alternating Test) The \mydef{alternating test} is the application of the subgroup test when $H=A_n$. This is most useful when $G$ has already been determined to be a giant by the giant test, in which case a further distinction between $A_n$ and $S_n$ identifies $G$. 
\item (The Block Test) The \mydef{block test} is the application of the subgroup test when $H$ is a wreath product $\textrm{Wr}(\mathcal B)$ for some partition $\mathcal B$ of $[n]$ into blocks of equal sizes. This test determines if $\mathcal B$ is a block structure of a transitive group $G$.
\item (The Orbit Refining Test) The \mydef{orbit refining test} is the application of the subgroup test when $H$ is a Young subgroup $S_{\Delta}$ corresponding to some partition $\Delta$ of $[n]$. The orbit test determines whether the orbit partition  of $G$ refines the partition $\Delta$. This is an important subroutine of \autoref{alg:OrbitConfirmation}.
\end{itemize}

\begin{example}
We illustrate the subgroup test (specifically, the alternating test) by verifying that $W(E_6) \leq A_{27}=H$ using $X =X(W(E_6),\frac{1}{3})$ under the assumption $p<\widetilde{p}=1/2$. We compute 
$\delta = \frac{1}{4}-\frac{\widetilde{p}}{2}+\frac{\widetilde{p}}{4} = 0.125$.
and $N(0.01,\delta) = 147.36$. The sample mean of $N=148$ i.i.d. observations of $X =X(W(E_6),\frac{1}{3}) \in A_{27}$ w is $\overline{\mu} = 0.83783$.
 This is well-above the distinguishing threshold $c=\frac{3(1-1/2)}{4} + (1/2)(1/2) = 0.625$. Thus, the mean-threshold distinguisher declares that  $G \leq A_{27}$. 
\end{example}

Equipped with the subgroup test, we adapt the the naive group recovery algorithm (\autoref{alg:naivealgorithm}) to find supergroups of $G$ using only $X$ and the subgroup test (\autoref{alg:subgrouptest}).
\begin{algorithm}[!htpb]
  \SetKwIF{If}{ElseIf}{Else}{if}{then}{elif}{else}{}%
  \DontPrintSemicolon
  \SetKwProg{Find Supergroup}{Find Supergroup}{}{}
  \LinesNotNumbered
  \KwIn{$X$ \\ $\bullet$ The ability to sample from the distribution of $X(G,p)$  \\ $\bullet$ (Optional) Any property $\overline{\mathcal Q}$ which  is true for every supergroup of $G$ }
  \KwOut{A supergroup of $G$ satisfying $\overline{\mathcal Q}$}
  \nl Set $\texttt{gens} = \{\}$\;
  \nl Set $\texttt{gens} = \texttt{gens} \cup \{\sigma\}$ for $\sigma \sim X$. \;
  \nl Set $H=\langle g \mid g \in \texttt{gens} \rangle$\;
  \nl Check if $H$ satisfies $\overline{\mathcal Q}$. If not, go to $\texttt{2}$.\;
  \nl Check if $G \leq H$ using the subgroup test. If not, go to $\texttt{2}$.\;
  \nl \textbf{return} $H$
  \caption{Find Supergroup\label{alg:findsupergroup}}
\end{algorithm}

The main utility of \autoref{alg:findsupergroup} is that it may be used to produce new properties $\mathcal P$ satisfied by elements of $G$, namely $\mathcal P\colon \sigma \in H$. There is a positive probability that the returned group is $G$ itself, in which case this error detection method reduces $q_{\mathcal P}$ to $0$. One may notice that in our main algorithm, illustrated in \autoref{fig:flowchart}, finding supergroups and using the corresponding error detection property is the final loop after all other algorithms described in this paper have been applied. 

\begin{example}
\label{ex:27linesSupergroups}
Let $G = W(E_6)$ and take $p=0.2$, $\widetilde{p}=0.33$, and $\alpha =0.2$. \autoref{fig:supergroup_pie} shows two pie charts indicating the results of $N=1000$ applications of \autoref{alg:findsupergroup}, using the subgroup test with error bound $\widetilde{p}$ and confidence level $0.8$. The first pie chart uses no property $\overline{\mathcal Q}$ but the second pie chart uses the property that every supergroup of $G$ is transitive (since $G$ is).

\begin{figure}[!htpb]
\includegraphics[scale=0.4]{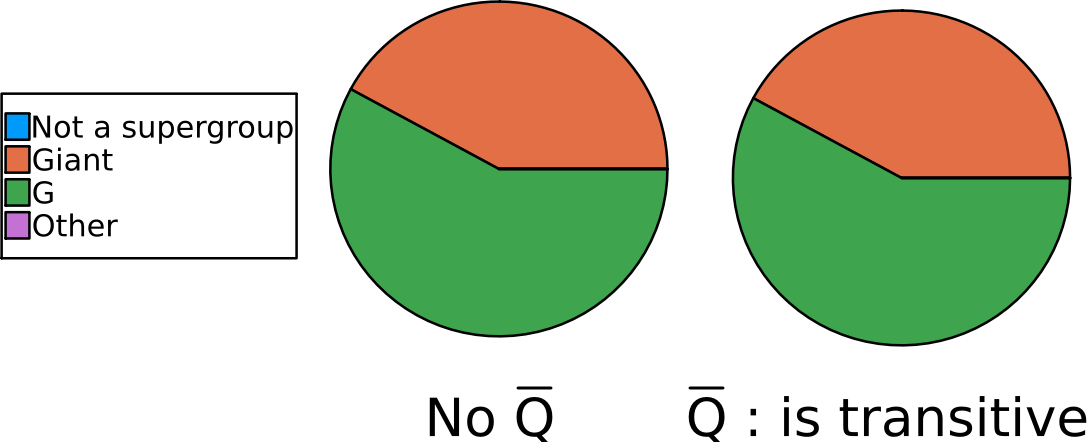}
\caption{The results of \autoref{alg:findsupergroup} ($N=10,000$) using no $\overline{\mathcal Q}$ and  $\overline{\mathcal Q}\colon$is transitive. }
\label{fig:supergroup_pie}
\end{figure}

 Notably, the pie charts are essentially identical. This is because equipping \autoref{alg:findsupergroup} with transitivity information only eliminates intransitive results, a rare potential output. The benefit, however, is shown in  \autoref{fig:samplehistogram}: the number of samples of $X$ necessary to verify a supergroup of $G$ is significantly reduced since recognizing intransitivity bypasses subgroup tests.
\begin{figure}[!htpb]
\includegraphics[scale=0.2]{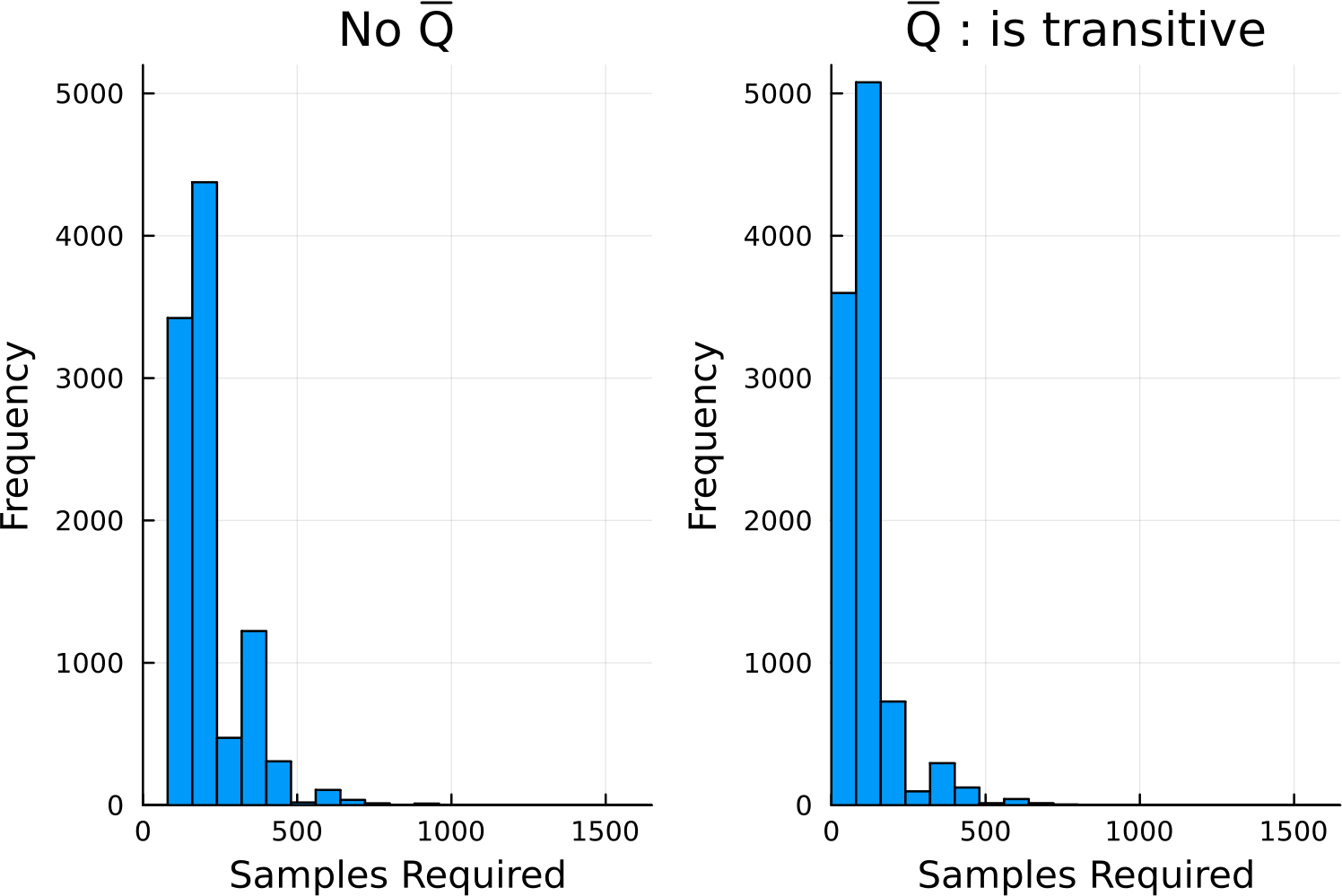}
\caption{Two histograms indicating the distribution of samples required to run \autoref{alg:findsupergroup} in the cases that the user passes (left)   no $\overline{\mathcal Q}$ and (right) $\overline{\mathcal Q}\colon$is transitive.}
\label{fig:samplehistogram}
\end{figure}
\end{example}

\subsection{Transitivity test} \label{secsec:transitivity_test} The \mydef{transitivity test} determines if $G$ is transitive by estimating the expected value of $\textrm{Fix}(X)$ and applying Burnside's lemma (\autoref{lem:Burnside}).
\begin{lemma}
\label{lem:BurnsideSeparator}
Let $G \leq S_n$ be a permutation group with $m$ orbits, then 
\[
\textrm{Fix}(X) = (1-p)\textrm{Fix}(Y)+p\textrm{Fix}(Z) \quad \quad \text{ and } \quad \quad \mathbb{E}(\textrm{Fix}(X)) = (1-p)m +p.
\]
\end{lemma}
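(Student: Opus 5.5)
The plan is to condition on the latent Bernoulli variable that decides whether the observation comes from $Y=U_G$ or $Z=U_{S_n}$. The first displayed identity should be read at the level of distributions. With probability $1-p$ the observation $X$ has the law of $Y$, so $\textrm{Fix}(X)$ has the law of $\textrm{Fix}(Y)$. With probability $p$ it has the law of $Z$, so $\textrm{Fix}(X)$ has the law of $\textrm{Fix}(Z)$. Equivalently, for every integer $f$,
\[
\Pr(\textrm{Fix}(X)=f) = (1-p)\Pr(\textrm{Fix}(Y)=f) + p\Pr(\textrm{Fix}(Z)=f).
\]
This follows directly from the pmf \eqref{eq:pmfX}, summed over the set of $\sigma$ with $\textrm{Fix}(\sigma)=f$. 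So the mixture decomposition of $X$ passes to any function of $X$, and in particular to $\textrm{Fix}(X)$.

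Taking expectations then gives, by linearity (the law of total expectation over the latent variable),
\[
\mathbb{E}(\textrm{Fix}(X)) = (1-p)\mathbb{E}(\textrm{Fix}(Y)) + p\,\mathbb{E}(\textrm{Fix}(Z)).
\]

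Next I apply Burnside's lemma (\autoref{lem:Burnside}) twice. For $Y$, which is uniform on $G$, $\mathbb{E}(\textrm{Fix}(Y)) = \frac{1}{|G|}\sum_{g\in G}\textrm{Fix}(g) = m$, the number of orbits of $G$. For $Z$, which is uniform on $S_n$, I apply the same lemma to $S_n$ itself. Since $S_n$ is transitive on $[n]$, it has exactly one orbit, so $\mathbb{E}(\textrm{Fix}(Z))=1$. As a sanity check, this also follows from linearity: each of the $n$ points is fixed with probability $1/n$. Substituting both values gives $(1-p)m+p$.

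There is no real obstacle here. The only point needing care is how to interpret the first equation, which cannot literally be a pointwise identity of random variables. I would state explicitly that it means equality of the mixture distributions (equivalently, that $\textrm{Fix}(X)$ is the $p$-mixture of $\textrm{Fix}(Y)$ and $\textrm{Fix}(Z)$), and then derive the expectation formula from that.
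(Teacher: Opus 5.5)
Your proposal is correct and takes the same route as the paper. The paper's proof only cites Burnside's lemma (applied to $G$ and to the transitive group $S_n$) together with linearity of expectation over the mixture, and your remark that the first identity must be read as an equality of mixture distributions, not a pointwise identity of random variables, is a fair clarification of the paper's notation.
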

\begin{proof}
This follows directly from Burnside's lemma and linearity of expectation.
\end{proof}
\autoref{lem:BurnsideSeparator} can be easily extended to the random variable $\textrm{Fix}_k(X)$  which counts the number of distinct ordered $k$-tuples which are fixed by an observation of $X$:
\[
\mydef{\textrm{Fix}_k(X)} = \begin{cases}
0 & \textrm{Fix}(X)<k \\
\frac{\textrm{Fix}(X)!}{(\textrm{Fix}(X)-k)!} & \text{ otherwise}.
\end{cases}\]
\begin{corollary}
Suppose the action of $G \leq S_n$  on distinct ordered $k$-tuples has $m$ orbits. Then,
\[
\textrm{Fix}_k(X) = (1-p)\textrm{Fix}_k(Y)+p\textrm{Fix}_k(Z) \quad \quad \text{ and } \quad \quad \mathbb{E}(\textrm{Fix}_k(X)) = (1-p)m +p.
\]
\end{corollary}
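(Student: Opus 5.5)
The statement to prove is the corollary extending \autoref{lem:BurnsideSeparator} to ordered $k$-tuples. I will reduce it to Burnside's lemma applied to the permutation groups $G^{(k)}$ and $S_n^{(k)}$ acting on the set $\Omega_k$ of distinct ordered $k$-tuples of $[n]$.

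The first step is the key observation: for any $\sigma\in S_n$, the number of distinct ordered $k$-tuples fixed by $\sigma$ equals $\textrm{Fix}_k(\sigma)$. Indeed, $\sigma$ fixes $(i_1,\ldots,i_k)$ if and only if each $i_j$ is a fixed point of $\sigma$. So the fixed tuples are exactly the ordered $k$-tuples of distinct points drawn from the $\textrm{Fix}(\sigma)$ fixed points. There are $\textrm{Fix}(\sigma)!/(\textrm{Fix}(\sigma)-k)!$ of them, and none when $\textrm{Fix}(\sigma)<k$. Hence $\textrm{Fix}_k(\sigma)=\textrm{Fix}(\sigma^{(k)})$, where $\sigma^{(k)}$ is the induced permutation of $\Omega_k$.

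Next comes the mixture decomposition. The displayed identity $\textrm{Fix}_k(X)=(1-p)\textrm{Fix}_k(Y)+p\,\textrm{Fix}_k(Z)$ is to be read at the level of distributions, exactly as in \autoref{lem:BurnsideSeparator}. The law of $\textrm{Fix}_k(X)$ is the $(1-p,p)$ mixture of the laws of $\textrm{Fix}_k(Y)$ and $\textrm{Fix}_k(Z)$, because the pmf of $X$ is that mixture by \eqref{eq:pmfX}. Taking expectations then gives
\[\mathbb{E}(\textrm{Fix}_k(X))=(1-p)\,\mathbb{E}(\textrm{Fix}_k(Y))+p\,\mathbb{E}(\textrm{Fix}_k(Z)).\]

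The final step is to apply Burnside's lemma (\autoref{lem:Burnside}) to both terms. Since $g\mapsto g^{(k)}$ is a bijection $G\to G^{(k)}$, we get
\[\mathbb{E}(\textrm{Fix}_k(Y))=\frac{1}{|G|}\sum_{g\in G}\textrm{Fix}(g^{(k)})=m.\]
For the error term, $S_n^{(k)}$ is transitive on $\Omega_k$, because any ordered $k$-tuple of distinct points can be sent to any other by some permutation of $[n]$. So Burnside gives $\mathbb{E}(\textrm{Fix}_k(Z))=1$. Substituting yields $(1-p)m+p$.

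There is one mild obstacle, namely the case $k>n$. Then $\Omega_k$ is empty and Burnside's lemma gives $0$ orbits, so the formula must be read as valid for $1\le k\le n$. I will state that implicit assumption, and otherwise the argument is routine.
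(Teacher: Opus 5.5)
Your proof is correct and follows the paper's route. The paper extends \autoref{lem:BurnsideSeparator} (Burnside's lemma plus linearity of expectation over the mixture) to the action on $k$-tuples. You supply the details it leaves implicit: $\textrm{Fix}_k(\sigma)$ counts the fixed points of $\sigma^{(k)}$, and $S_n$ is transitive on distinct $k$-tuples, which produces the $p$ term. Your caveat that the formula needs $k\le n$ is also accurate: for $k>n$ one has $m=0$, yet the right-hand side equals $p$. This matches the paper's restriction $k\in[n]$ in the $k$-transitivity test.
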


\begin{algorithm}[!htpb]
  \SetKwIF{If}{ElseIf}{Else}{if}{then}{elif}{else}{}%
  \DontPrintSemicolon
  \SetKwProg{$k$-Transitivity Test}{$k$-Transitivity Test}{}{}
  \LinesNotNumbered
  \KwIn{$(X,N)$ \\ $\bullet$ The ability to sample from the distribution of $X$ \\ $\bullet$ $N$ a number of samples \\$\bullet$  $k \in [n]$ \\  }
  \KwOut{\texttt{true} if $G$ is at least  $k$-transitive and \texttt{false} otherwise}
  \nl Let $c = \frac{3-\widetilde{p}}{2}$ for any $\widetilde{p}>p$.\;
  \nl Distinguish if $\mathbb{E}(\textrm{Fix}_k(X))<c$ using the mean-threshold distinguisher on $N$ samples. \;
  \nl \textbf{return} \texttt{true} if $\mathbb{E}(\textrm{Fix}_k(X))<c$ and \texttt{false} otherwise.
  \caption{$k$-Transitivity Test\label{alg:transitivitytest}}
\end{algorithm}

\begin{theorem}
\label{thm:transitivity_test_thm}
\autoref{alg:transitivitytest} determines whether $G$ is $k$-transitive with probability at least $1-\alpha$ when $p<\widetilde{p}$ and $N>N\left(\alpha,\frac{1-\widetilde{p}}{2n\cdot(n-1)\cdots(n-k+1)}\right)$. When $k=1$, this specializes to $N>N\left(\alpha,\frac{1-\widetilde{p}}{2n}\right)$
\end{theorem}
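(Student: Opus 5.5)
The plan is to reduce the statement to the mean-threshold distinguisher of \autoref{lem:hoeffding_distinguisher}, applied to a rescaled version of $\textrm{Fix}_k(X)$. The separation of expected values comes from the corollary to \autoref{lem:BurnsideSeparator}.

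First, I would pin down the two expected values. Let $m$ be the number of orbits of $G^{(k)}$. By Burnside's lemma applied to $G^{(k)}$, $\mathbb{E}(\textrm{Fix}_k(Y)) = m$. The error component $Z$ is uniform on $S_n$, which is $k$-transitive for $k \le n$, so $\mathbb{E}(\textrm{Fix}_k(Z)) = 1$. This is where the ``$+p$'' in the corollary comes from, and I would make it explicit. Hence $\mathbb{E}(\textrm{Fix}_k(X)) = (1-p)m + p$. If $G$ is $k$-transitive then $m=1$ and the expectation equals $1$. Otherwise $m \ge 2$, so the expectation is at least $2(1-p)+p = 2-p > 2-\widetilde{p}$, using $p<\widetilde{p}$. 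The threshold $c = \frac{3-\widetilde{p}}{2}$ is exactly the midpoint of $a=1$ and $b=2-\widetilde{p}$, with half-gap $\frac{1-\widetilde{p}}{2}$.

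Second, I would handle the range. \autoref{lem:hoeffding_distinguisher} requires a $[0,1]$-valued variable, but $\textrm{Fix}_k(X)$ takes values in $[0, D]$ with $D = n(n-1)\cdots(n-k+1) = \frac{n!}{(n-k)!}$; the maximum is attained at the identity. I would therefore set $W = \textrm{Fix}_k(X)/D \in [0,1]$. Its mean is at most $a' = 1/D$ in the $k$-transitive case and at least $b' = (2-\widetilde{p})/D$ otherwise. The rescaled threshold is $c/D$, which is the midpoint of $a'$ and $b'$, and the half-gap is $\delta' = \frac{1-\widetilde{p}}{2D}$. Comparing $\sum_i \textrm{Fix}_k(X_i)$ with $cN$ is the same decision as comparing $\sum_i W_i$ with $(c/D)N$. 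So \autoref{lem:hoeffding_distinguisher} applies directly and gives error probability at most $e^{-2\delta'^2 N}$. This is at most $\alpha$ once $N > N(\alpha,\delta') = N\!\left(\alpha,\frac{1-\widetilde{p}}{2n(n-1)\cdots(n-k+1)}\right)$. Setting $k=1$ gives $D=n$ and the stated specialization.

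The main point of care is this rescaling step. The naive application of Hoeffding with $\delta = \frac{1-\widetilde{p}}{2}$ would be wrong, because the variable is unbounded relative to $[0,1]$. The factor $D$ in the denominator is precisely the price of normalizing by the maximal value of $\textrm{Fix}_k$. The remaining checks are routine: the distinguisher's output matches the algorithm's return convention, since ``$\mathbb{E}<c$'' corresponds to $k$-transitive; and the strict inequality $p<\widetilde{p}$ is what guarantees the gap $b>a$ is genuinely positive.
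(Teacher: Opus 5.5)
Your proposal is correct and follows the same route as the paper. You separate the means $1$ and $(1-p)m+p\ge 2-p$ via Burnside applied to $G^{(k)}$, normalize $\textrm{Fix}_k(X)$ by the falling factorial $n(n-1)\cdots(n-k+1)$ to get a $[0,1]$-valued variable, and apply the mean-threshold distinguisher of \autoref{lem:hoeffding_distinguisher} with half-gap $\frac{1-\widetilde{p}}{2n(n-1)\cdots(n-k+1)}$; you are also more explicit than the paper about why $\mathbb{E}(\textrm{Fix}_k(Z))=1$.

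One small correction to your closing remark: the gap $b-a=1-\widetilde{p}$ is positive because $\widetilde{p}<1$, whereas $p<\widetilde{p}$ is what places the non-$k$-transitive mean above $b$.
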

\begin{proof}
As usual, this is a combination of the separation bounds of \autoref{lem:BurnsideSeparator} and the elementary mean-threshold distinguisher of \autoref{lem:hoeffding_distinguisher}. Since $\textrm{Fix}_k(X)$ takes values bounded between $0$ and the falling factorial $n\cdot(n-1)\cdots(n-k+1)$, we normalize and use the random variable $\frac{\textrm{Fix}_k(X)}{n\cdot(n-1)\cdots(n-k+1)}$ which is $[0,1]$-valued. The result then follows from Hoeffding's inequality.
\end{proof}

\begin{remark}[$k$-transitivity is hard]
\label{rem:k_transitivity_hard}
We include the $k$-transitivity test as an extension of the transitivity test for the sake of completion. Knowing that $1$-transitivity fails, or that $2$-transitivity holds, earns information about $G$ that can be used for error detection. If $G$ is not transitive, then there is a Young supergroup corresponding to the orbits which can be used for sample error detection. If $G$ is $2$-transitive then $G$ is primitive and $\mathcal Q\colon$\textit{is primitive} may be used for group error detection. Beyond $k=2$, we do not see this as an effective test as the sample sizes required for high confidence become prohibitively large. 
\end{remark}

\subsection{Orbit Agreement, Confirmation, and Recovery} \label{secsec:orbit_recovery} Suppose that the orbits $\Delta=(\Delta_1,\ldots,\Delta_m)$ of $G \leq S_n$ have sizes $n_1,\ldots,n_m$ respectively. 
 Write $\mydef{i \sim j}$ if $i$ and $j$ belong to the same orbit of $G$ and $\mydef{G.i}$ for the orbit of $G$ containing $i$. The key to recovering the orbits of $G$ is understanding the random variable
\[
\mydef{X^{i \mapsto j}} = [X(i)=j] =\begin{cases} 1 & X(i)=j \\ 0 & \text{otherwise} \end{cases}.
\] 
We first analyze $Y^{i \mapsto j}$ when $G$ is transitive.
 \begin{lemma}
\label{lem:prob_of_moving}
If $G \leq S_n$ is a transitive group and $i \sim j$ then $\mathbb{E}(Y^{i \mapsto j}) = \Pr(Y(i)=j)=\frac{1}{n}$.
\end{lemma}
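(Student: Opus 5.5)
The plan is to use the orbit--stabilizer theorem together with the fact that $Y=U_G$ is uniform on $G$. Since $G$ is transitive and $i\sim j$, some $g_0\in G$ satisfies $g_0(i)=j$. The event $Y(i)=j$ is exactly the event that $Y$ lies in the set $\{g\in G \mid g(i)=j\}$. This set is the left coset $g_0\,\mathrm{Stab}_G(i)$: if $g(i)=j$, then $g_0^{-1}g$ fixes $i$, and conversely every element of $g_0\,\mathrm{Stab}_G(i)$ sends $i$ to $j$.

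Because $Y$ is uniform on $G$, we get
\[
\Pr(Y(i)=j)=\frac{|g_0\,\mathrm{Stab}_G(i)|}{|G|}=\frac{|\mathrm{Stab}_G(i)|}{|G|}=\frac{1}{[G:\mathrm{Stab}_G(i)]}.
\]
By orbit--stabilizer, $[G:\mathrm{Stab}_G(i)]=|G.i|$, and transitivity gives $|G.i|=n$. Hence the probability is $\frac1n$. Finally, $\mathbb{E}(Y^{i\mapsto j})=\Pr(Y(i)=j)$ because $Y^{i\mapsto j}$ is a Bernoulli indicator.

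An alternative that avoids cosets: for fixed $i$, the events $\{Y(i)=j\}$ over $j\in[n]$ partition the sample space. Left multiplication by $h\in G$ preserves the uniform distribution on $G$, so $\Pr(Y(i)=j)=\Pr(hY(i)=h(j))=\Pr(Y(i)=h(j))$. Transitivity then makes all $n$ of these probabilities equal, so each is $\frac1n$.

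There is no real obstacle here. The only point needing care is checking that the fiber $\{g\mid g(i)=j\}$ is nonempty (this is where transitivity, or $i\sim j$, is used) and that it is a full coset. The same argument for an intransitive $G$ gives $\frac{1}{|G.i|}$, which is the form used later in \autoref{cor:movement_separation}.
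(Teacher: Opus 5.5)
Your proof is correct, but it takes a different route from the paper's. The paper uses Burnside's lemma. A transitive group has on average one fixed point. Conjugating by some $\tau$ with $\tau(i)=j$ shows that every point is equally likely to be fixed, so $\Pr(Y(i)=i)=\frac{1}{n}$. The bijection $\sigma\mapsto\tau\sigma$ then transfers this to $\Pr(Y(i)=j)$.

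You instead identify the fiber $\{g\in G\mid g(i)=j\}$ as a left coset of $\mathrm{Stab}_G(i)$ and apply orbit--stabilizer. This is more elementary, since Burnside's lemma is itself usually proved via orbit--stabilizer, and it takes one step instead of two symmetry arguments. Your alternative argument shares the paper's left-translation step. It replaces the Burnside-plus-conjugation step with the trivial normalization $\sum_{j}\Pr(Y(i)=j)=1$.

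Your coset version has a further advantage: it gives $\Pr(Y(i)=j)=\frac{1}{|G.i|}$ for intransitive $G$ directly. The paper reaches \autoref{cor:movement_separation} by projecting onto the transitive constituent and invoking \autoref{lem:uniform_homomorphism}. The paper's route, for its part, keeps the lemma tied to the Burnside-based transitivity test used elsewhere.
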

\begin{proof}
Burnside's lemma states that the expected number of fixed points in a transitive group is $1$. Let $\tau$ be an element of $G$ which maps $i$ to $j$. Every point has an equal probability of being fixed since $\sigma \in G$ fixes $i$ if and only if its image $\tau\sigma \tau^{-1}$ under the inner automorphism of conjugation by $\tau$ fixes $j$. Similarly, the probability that $i$ maps to $i$ is the same as the probability that $i$ maps to $j$ since $\sigma(i)=i \iff \tau\sigma(i)=j$ and $\sigma \mapsto \tau \sigma$ is a bijection. 
\end{proof}

\begin{lemma}
\label{lem:uniform_homomorphism}
Let $\psi: G \to H$ be a group homomorphism. If $Y$ is distributed uniformly on $G$ then $\psi(Y)$ is distributed uniformly on $\psi(G)$
\end{lemma}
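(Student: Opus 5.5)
The plan is to compute the pushforward distribution directly and show that its mass function is constant on $\psi(G)$. The tool is the first isomorphism theorem: the fibers of $\psi$ are the cosets of $K=\ker\psi$, and these all have the same size.

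First I will fix $h \in \psi(G)$ and choose some $g_0 \in G$ with $\psi(g_0)=h$. Then I will identify the fiber as
\[
\psi^{-1}(h)=\{g\in G \mid \psi(g)=h\}=g_0K.
\]
One inclusion holds because $\psi(g_0k)=h\psi(k)=h$ for $k\in K$. For the other, if $\psi(g)=h$ then $\psi(g_0^{-1}g)=1$, so $g_0^{-1}g\in K$.

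Next, left multiplication by $g_0$ is a bijection $K\to g_0K$, so $|\psi^{-1}(h)|=|K|$. Since $Y$ is uniform on $G$, this gives
\[
\Pr(\psi(Y)=h)=\Pr(Y\in g_0K)=\frac{|K|}{|G|}.
\]
This value does not depend on $h$. Every element outside $\psi(G)$ has probability zero. Summing over $\psi(G)$, or invoking $|G|=|K|\,|\psi(G)|$ directly, shows the common value is $\frac{1}{|\psi(G)|}$. So $\psi(Y)$ is uniform on $\psi(G)$.

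There is no real obstacle; the only step that needs care is the equality of the fiber with the coset $g_0K$. In context, the intended application is to take $\psi$ as the restriction map $G\to S_{\Delta_i}$ onto a transitive constituent $G^{\Delta_i}$. That restriction is a homomorphism on $G\leq S_\Delta$ because $G$ preserves each orbit, so it is worth noting this explicitly, although the lemma itself is stated for an arbitrary homomorphism.
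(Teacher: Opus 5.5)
Your proof is correct and takes essentially the same approach as the paper, whose one-line proof simply observes that all cosets of $\ker(\psi)$ have equal size. You fill in the details: each fiber $\psi^{-1}(h)$ equals a coset $g_0\ker\psi$, so every $h\in\psi(G)$ has probability $|\ker\psi|/|G| = 1/|\psi(G)|$.
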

\begin{proof}
This follows from the observation that all cosets of $\ker(\psi)$ have equal size. 
\end{proof}

\autoref{lem:uniform_homomorphism} applied to the natural projection $\varphi_i \colon G \to G^{\Delta_i}$ implies a corollary of \autoref{lem:prob_of_moving}.
\begin{corollary}
\label{cor:movement_separation}
Let $G \leq S_n$  and $i \sim j$, then $\mathbb{E}(Y^{i \mapsto j}) = \Pr(Y(i)=j) = \frac{1}{|G.i|}$. In particular,
\[
\Pr(X(i)=j) =  \begin{cases} 
      (1-p)\frac{1}{|G.i|}+p\frac{1}{n} \geq \frac{1}{n} & i \sim j  \\
      p\frac{1}{n} & \text{otherwise}   \end{cases}.
      \]
\end{corollary}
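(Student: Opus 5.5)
The plan is to reduce to the transitive case through the transitive constituent on the orbit $\Delta_k = G.i$, and then handle the error component $Z$ separately.

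\emph{Step 1: the $Y$ term.} Suppose $i \sim j$, and let $\Delta_k = G.i$ be their common orbit. Consider the restriction homomorphism $\varphi_k \colon G \to G^{\Delta_k}$. By \autoref{lem:uniform_homomorphism}, $\varphi_k(Y)$ is uniform on $G^{\Delta_k}$. Moreover $Y(i) = \varphi_k(Y)(i)$ because $i \in \Delta_k$. The constituent $G^{\Delta_k}$ is a transitive group of degree $|\Delta_k| = |G.i|$ containing both $i$ and $j$. So \autoref{lem:prob_of_moving}, applied to $G^{\Delta_k}$ acting on $\Delta_k$, gives $\Pr(Y(i)=j) = \frac{1}{|G.i|}$. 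If instead $i \not\sim j$, then no element of $G$ sends $i$ to $j$, so $\Pr(Y(i)=j)=0$.

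\emph{Step 2: the $Z$ term.} For $Z$ uniform on $S_n$, the image $Z(i)$ is uniform on $[n]$. Hence $\Pr(Z(i)=j) = \frac{1}{n}$ for every pair $i,j$, whether or not $i=j$.

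\emph{Step 3: combine.} Condition on the latent Bernoulli variable, as in \eqref{eq:pmfX}, to get
\[
\Pr(X(i)=j) = (1-p)\Pr(Y(i)=j) + p\Pr(Z(i)=j).
\]
Substituting Steps 1 and 2 gives $(1-p)\frac{1}{|G.i|} + \frac{p}{n}$ when $i \sim j$, and $\frac{p}{n}$ otherwise. For the inequality in the first case, use $|G.i| \le n$. Then the expression is at least $(1-p)\frac{1}{n} + \frac{p}{n} = \frac{1}{n}$.

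\emph{Main obstacle.} The only point needing care is Step 1's reduction. One must check that \autoref{lem:prob_of_moving} applies to $G^{\Delta_k}$ viewed as a transitive group on $\Delta_k$, with $n$ there replaced by $|\Delta_k|$. One must also check that uniformity passes through $\varphi_k$, which is exactly what \autoref{lem:uniform_homomorphism} provides. Everything else is linearity and routine substitution.
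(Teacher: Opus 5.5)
Your proposal is correct and follows the paper's route. You push $Y$ forward along the restriction $G \to G^{\Delta_k}$ using \autoref{lem:uniform_homomorphism}, apply \autoref{lem:prob_of_moving} to the transitive constituent on $G.i$, and then combine with the uniform $Z$-term through the mixture decomposition, spelling out the $Z(i)$ computation and the $|G.i| \le n$ inequality that the paper leaves implicit.
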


An alternative choice for a random variable which  distinguishes $i \sim j$ from $i \not\sim j$ is 
\[
\mydef{X^{i \sim j}} = [X.i = X.j] = \begin{cases} 1 & i \text{ and } j \text{ belong to the same cycle of } X \\
0 & \text{otherwise} \end{cases}.
\]
For $G=S_n$, it is well-known that $\textrm{Pr}_{\sigma \sim U_G}(\sigma.i = \sigma.j) = \frac{1}{2}$, however, for other groups, this distribution is not so well-behaved. Nonetheless, it is guaranteed still that the expected values of $X^{i \sim j}$ seperate.
\begin{lemma}
Let $G \leq S_n$ and consider $X=X(G,p)$ for some $p \in [0,1]$. Then
\[
\textrm{Pr}(X^{i \sim j}) = \begin{cases} (1-p)\mathbb{E}(Y^{i \sim j}) +\frac{p}{2} & i \sim j \\
\frac{p}{2} & i \not\sim j \end{cases}
\]
\end{lemma}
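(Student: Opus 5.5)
Since $X$ is the mixture of $Y=U_G$ with weight $1-p$ and $Z=U_{S_n}$ with weight $p$, conditioning on the latent Bernoulli variable gives
\[
\Pr(X^{i\sim j}) = (1-p)\Pr(Y.i=Y.j) + p\Pr(Z.i=Z.j) = (1-p)\mathbb{E}(Y^{i\sim j}) + p\,\mathbb{E}(Z^{i\sim j}).
\]
The proof therefore splits into two independent tasks. First, compute the uniform-$S_n$ term. Second, show that the $G$-term vanishes when $i\not\sim j$.

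For the $S_n$ term, I will show $\Pr_{\sigma\sim U_{S_n}}(\sigma.i=\sigma.j)=\frac12$ for $i\neq j$; the case $i=j$ is degenerate and I will assume it is excluded. I would argue via the fundamental bijection, or more cleanly by cycle-walking. Starting at $i$, follow the cycle $i,\sigma(i),\sigma^2(i),\ldots$. At each step the next point is uniform among the points not yet visited together with the option of closing back to $i$. Conditioning on the first moment the walk either reaches $j$ or closes the cycle, these two options are symmetric, each having probability $1/(\text{remaining}+1)$. Hence $j$ is hit before closure with probability exactly $\frac12$.

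An alternative check uses the length of the cycle through $i$, which is uniform on $\{1,\ldots,n\}$. Given length $\ell$, the cycle contains $j$ with probability $(\ell-1)/(n-1)$. Averaging gives $\frac{1}{n}\sum_{\ell}\frac{\ell-1}{n-1}=\frac12$. I would present this version, citing the standard fact that the cycle length containing a fixed point is uniform; this fact itself follows by counting, since there are $(n-1)!$ permutations for each length.

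For the $G$ term when $i\not\sim j$: every $g\in G$ maps $i$ into $G.i$, so the cycle of $g$ through $i$ lies in $G.i$, which does not contain $j$. Thus $Y^{i\sim j}=0$ almost surely. When $i\sim j$, the term $(1-p)\mathbb{E}(Y^{i\sim j})$ is left as is, which matches the statement.

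The only step with real content is the $\frac12$ computation for uniform $S_n$. It is well known, but I will include the short cycle-length averaging argument so that the lemma is self-contained.
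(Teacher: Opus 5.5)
Your proof is correct and follows the argument the paper relies on; the paper gives no separate proof, only conditioning on whether the sample comes from $Y=U_G$ or $Z=U_{S_n}$ plus the ``well-known'' fact $\Pr_{\sigma\sim U_{S_n}}(\sigma.i=\sigma.j)=\frac12$, which your cycle-length averaging proves. You are also right to exclude $i=j$: there $X^{i\sim j}\equiv 1$, so the stated formula fails whenever $p>0$, and the paper's statement leaves this hypothesis implicit.
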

We remark that $\mathbb{E}(Y^{i \sim j})$ can range from $1/n$ to $1$. The low end is witnessed by the worst-case-scenario of $G = \mathbb{Z}_2^m$, in which case $Y^{i \sim j} = Y^{i \mapsto j}$. The high end is witnessed by the cyclic group of order $n$. Thus, there is no strict bounding relation between $\mathbb{E}(Y^{i \sim j})$ and $\mathbb{E}(Z^{i \sim j})$. In what follows, we provide rigorous analysis of orbit recovery using the random variable $X^{i \mapsto j}$ which is possible due to its uniform behaviour. However, in practice, $X^{i \sim j}$ performs much better in distinguishing orbit relationships, in particular, since $n$ often does not appear in the denominator of the expected values, as it does in \autoref{cor:movement_separation}.

\begin{algorithm}[!htpb]
  \SetKwIF{If}{ElseIf}{Else}{if}{then}{elif}{else}{}%
  \DontPrintSemicolon
  \SetKwProg{Orbit Agreement}{Orbit Agreement}{}{}
  \LinesNotNumbered
  \KwIn{$(X,i,j,N)$ \\ $\bullet$ The ability to sample from the distribution of $X$ \quad $\bullet$ Any two points $i,j \in [n]$ \\ $\bullet$  $N$ a sample size}
  \KwOut{\texttt{true} if $i \sim j$ and \texttt{false} otherwise}
  \nl Let $c = (1+\tilde{p})/(2n)$ and $\delta = (1-\tilde{p})/2n$ for any $\tilde{p}>p$.\;
  \nl Determine if $\mathbb{E}(X^{i \mapsto j})>c$ using the mean-threshold distinguisher on $N$ samples. \;
  \nl \textbf{return} \texttt{true} if $\mathbb{E}(X^{i \mapsto j})>c$ and \texttt{false} otherwise. 
  \caption{Orbit Agreement\label{alg:OrbitAgreement}}
\end{algorithm}

\noindent The following lemma is a direct application of Hoeffding's inequality.
\begin{lemma}
\autoref{alg:OrbitAgreement} succeeds with probability at least $1-\alpha$ when $p < \widetilde{p}$ and $N> N\left(\alpha,\frac{1-\tilde{p}}{2n}\right)$. 
\end{lemma}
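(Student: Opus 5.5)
The plan is to reduce the claim to the mean-threshold distinguisher of \autoref{lem:hoeffding_distinguisher}, applied to the Bernoulli (hence $[0,1]$-valued) random variable $X^{i \mapsto j}$. The separation of its expected values comes from \autoref{cor:movement_separation}. This mirrors the proofs for the giant, subgroup and transitivity tests.

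\textbf{Step 1: separation of the means.} By \autoref{cor:movement_separation}, if $i \sim j$ then
\[
\mathbb{E}(X^{i\mapsto j}) = (1-p)\frac{1}{|G.i|}+\frac{p}{n} \geq \frac{1-p}{n}+\frac{p}{n} = \frac{1}{n},
\]
using $|G.i| \leq n$. If $i \not\sim j$, then $\mathbb{E}(X^{i \mapsto j}) = \frac{p}{n} < \frac{\widetilde{p}}{n}$, since $p<\widetilde{p}$. So we may take $a = \widetilde{p}/n$ and $b = 1/n$. One must note that $a<b$ requires $\widetilde{p}<1$, which is implicit since otherwise the required $\delta$ would be nonpositive. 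The midpoint is then $c=\frac{a+b}{2} = \frac{1+\widetilde{p}}{2n}$, matching the threshold in \autoref{alg:OrbitAgreement}, and the half-gap is $\delta = \frac{b-a}{2} = \frac{1-\widetilde{p}}{2n}$.

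\textbf{Step 2: apply Hoeffding.} With $N$ i.i.d. observations of $X^{i\mapsto j}$, \autoref{lem:hoeffding_distinguisher} says the distinguisher errs with probability at most $e^{-2\delta^2 N}$. This is at most $\alpha$ once $N \geq N(\alpha,\delta) = N\!\left(\alpha,\frac{1-\widetilde{p}}{2n}\right)$, which gives success probability at least $1-\alpha$.

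\textbf{Main obstacle.} The only delicate point is the case $i=j$, or more generally the lower bound $\frac{1}{|G.i|}\ge \frac1n$. Both are handled by \autoref{cor:movement_separation}, which was already obtained from \autoref{lem:prob_of_moving} and \autoref{lem:uniform_homomorphism}. A secondary check is that the hypotheses of the distinguisher lemma are stated with non-strict inequalities, whereas $p<\widetilde{p}$ gives the strict bound $\mathbb{E}\le a$, which is sufficient.
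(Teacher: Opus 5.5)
Your proposal is correct and takes the same approach as the paper. The paper only remarks that the lemma is a direct application of Hoeffding's inequality, meaning the mean-threshold distinguisher of \autoref{lem:hoeffding_distinguisher} applied to the Bernoulli variable $X^{i\mapsto j}$, with the separation $a=\widetilde{p}/n$, $b=1/n$ supplied by \autoref{cor:movement_separation}. Your proof makes exactly this explicit, and it recovers the algorithm's $c$ and $\delta$.
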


Rather than compounding error by iterating \autoref{alg:OrbitAgreement}, one may use the frequency vector of the random variable $X(i)$ to identify the orbit $G.i$, all at once.

\begin{theorem}
\label{thm:single_orbit_theorem} \autoref{alg:SingleOrbitRecovery} errs  with probability at most $\alpha$ when $p< \widetilde{p}$ and~$
N>~N\left(\frac{\alpha}{n},\frac{1-\widetilde{p}}{2n}\right).$
\end{theorem}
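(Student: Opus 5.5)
We have not seen \autoref{alg:SingleOrbitRecovery}, so we assume it does the natural thing. It draws $N$ observations $X_1,\ldots,X_N$ of $X$ and, for every $j\in[n]$, forms the empirical frequency $\widetilde{X}^{i\mapsto j}=\sum_{s=1}^N [X_s(i)=j]$. It then returns $\{j : \widetilde{X}^{i\mapsto j} > cN\}$ with threshold $c=\frac{1+\widetilde p}{2n}$, exactly as in \autoref{alg:OrbitAgreement}. The idea is that this runs $n$ orbit-agreement distinguishers in parallel, one for each $j$, on a shared sample. We then control the total failure probability with a union bound, which explains the $\alpha/n$ in the sample-size requirement.

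First we establish the separation for each $j$. By \autoref{cor:movement_separation}, if $j\in G.i$ then $\mathbb{E}(X^{i\mapsto j})=(1-p)\frac{1}{|G.i|}+\frac pn\ge\frac1n$. If $j\notin G.i$ then $\mathbb{E}(X^{i\mapsto j})=\frac pn<\frac{\widetilde p}{n}$. So with $a=\frac{\widetilde p}{n}$ and $b=\frac1n$, the $[0,1]$-valued variable $X^{i\mapsto j}$ satisfies either $\mathbb{E}\le a$ or $\mathbb{E}\ge b$. The midpoint is $c=\frac{1+\widetilde p}{2n}$ and the half-gap is $\delta=\frac{1-\widetilde p}{2n}$, independently of $j$ and of the unknown orbit size.

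Next we apply \autoref{lem:hoeffding_distinguisher} to each $j$ separately. Each $X_s^{i\mapsto j}$ for $s=1,\ldots,N$ is an i.i.d. copy of $X^{i\mapsto j}$, even though the variables for different $j$ are dependent. Hence the $j$-th decision is wrong with probability at most $e^{-2\delta^2N}$. This is at most $\alpha/n$ whenever $N\ge N(\alpha/n,\delta)$, which holds by hypothesis.

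Finally, the returned set differs from $G.i$ only if at least one of the $n$ membership decisions is wrong. A union bound over $j\in[n]$ then bounds the failure probability by $n\cdot\frac{\alpha}{n}=\alpha$.

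The point to handle carefully is that the tests share one sample, so they are not independent. The union bound needs no independence, so this causes no difficulty. One may also note that $j=i$ is always correctly included with high probability by the same argument, so no special case is needed. The only real obstacle is matching the algorithm's actual threshold to $c$ above. If the paper's algorithm instead uses, say, a sorted-gap rule, the same per-coordinate Hoeffding bound plus union bound should still apply after checking that its threshold lies between $\frac{\widetilde p}{n}$ and $\frac1n$ with margin $\delta$.
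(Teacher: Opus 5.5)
Your proposal is correct and matches the paper's proof. The algorithm does threshold the frequencies $u_j$ at $c=\frac{1+\widetilde p}{2n}$, and the paper applies the per-coordinate Hoeffding bound with $\delta=\frac{1-\widetilde p}{2n}$. It then takes a union bound, split into the $n-|G.i|$ possible false inclusions and the $|G.i|$ possible false exclusions, for a total failure probability of at most $ne^{-2\delta^2N}\le\alpha$.
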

\begin{proof}
The probability that \autoref{alg:SingleOrbitRecovery} miscategorizes a point as belonging to an orbit is 
\[
\Pr(u_j>c \text{ for } i \not\sim j) \leq \sum_{i \not\sim j} \Pr(u_j>c)  \leq \sum_{i \not\sim j} e^{-2\delta^2N} = (n-|G.i|)e^{-2\delta^2N}
\]
for $\delta = c-\frac{\widetilde{p}}{n} = \frac{1-\widetilde{p}}{2n}$. 
The miscategorization probability of not returning a point in the orbit of $i$ is 
\[
\Pr(u_j < c \text{ for some } i\sim j) \leq \sum_{i \sim j} \Pr(u_i<c) \leq \sum_{i \sim j} e^{-2\delta^2N} = |G.i|e^{-2\delta^2N}
\]
so the probability that either error happens is bounded by their sum
\[
\Pr(|G.i| \text{ not identified correctly by }\autoref{alg:SingleOrbitRecovery}) \leq ne^{-2\delta^2N}
\]
which is bounded by $\alpha$ whenever $N \geq N\left(\frac{\alpha}{n},\delta\right)$. 
\end{proof}

\begin{algorithm}[!htpb]
  \SetKwIF{If}{ElseIf}{Else}{if}{then}{elif}{else}{}%
  \DontPrintSemicolon
  \SetKwProg{Single Orbit Recovery}{Single Orbit Recovery}{}{}
  \LinesNotNumbered
  \KwIn{$(X,i,N)$ \\ $\bullet$ The ability to sample from the distribution of $X$  \quad  $\bullet$ Any point $i \in [n]$ \\ $\bullet$  $N$ a sample size \\}
  \KwOut{The orbit of $G$ containing $i$}
  \nl Create the vector $u=(u_1,\ldots,u_n)$ where $u_j$ is the number of observations $X(i)=j$, of $N$. \;
  \nl Set $c = (1+\tilde{p})/(2n)$ and $\delta = (1-\tilde{p})/2n$ for any $\tilde{p}>p$.\;
  \nl \textbf{return} the collection of $j$ for which $u_j>c$.\;
  \caption{Single Orbit Recovery\label{alg:SingleOrbitRecovery}}
\end{algorithm}

\begin{remark}[The marginals $X(i)$ are not independent]
\label{rem:dependence} The variables $X(i)$ and $X(i')$ are not independent. For example, for transitive $G$ and $i \neq j$
\[
0 = \textrm{Pr}(X(i)=1 \text{ and } X(i')=1) \neq \textrm{Pr}(X(i)=1) \cdot \textrm{Pr}(X(i')=1) = \frac{1}{n^2}. 
\]
Thus, it is not statistically valid to use the same sample of observations $X_1,\ldots,X_N$  to perform single orbit recovery on multiple orbits whilst claiming the confidence levels of \autoref{thm:single_orbit_theorem}. 
\end{remark}

\begin{algorithm}[!htpb]
  \SetKwIF{If}{ElseIf}{Else}{if}{then}{elif}{else}{}%
  \DontPrintSemicolon
  \SetKwProg{Orbit Recovery}{Orbit Recovery}{}{}
  \LinesNotNumbered
  \KwIn{$(X,N)$ \\ $\bullet$ The ability to sample from the distribution of $X$ \\ $\bullet$  $N$ a number of samples}
  \KwOut{The orbits of $G$}
  \nl Set \texttt{unclassified} equal to $[n]$ and set \texttt{orbits} equal to $\{\}$.\;
  \nl \While{$|$\texttt{unclassified}$|>1$}{\nl Pick $i \in \texttt{unclassified}$  and compute $G.i$ via  \autoref{alg:SingleOrbitRecovery}\;
    \nl Include $G.i$ as an element of \texttt{orbits} and remove each $j \in G.i$ from \texttt{unclassified}\;}
  \nl \textbf{return} \texttt{orbits}
  \caption{Orbit Recovery\label{alg:OrbitRecovery}}
\end{algorithm}

\begin{theorem}
\label{thm:orbit_recovery_theorem}
\autoref{alg:OrbitRecovery} recovers the orbits $\Delta=(\Delta_1,\ldots,\Delta_m)$ of $G$, of sizes $(n_1,\ldots,n_m)$ with confidence level $1-\alpha$ using   
\[
N \geq mN\left(\frac{1-\sqrt[m]{1-\alpha}}{2n},\frac{1-\widetilde{p}}{2n}\right) \leq nN\left(\frac{1-\sqrt[n]{1-\alpha}}{2n},\frac{1-\widetilde{p}}{2n}\right)
\]
samples of $X$. 
\end{theorem}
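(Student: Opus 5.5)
The plan is to view \autoref{alg:OrbitRecovery} as a sequence of calls to \autoref{alg:SingleOrbitRecovery}, each run on a \emph{fresh} batch of observations of $X$. Fresh batches are required because of \autoref{rem:dependence}: with independent batches, the events that each call succeeds are independent.

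First, when every call succeeds, the loop performs exactly $m$ calls, one per orbit. Each call returns the full orbit $G.i$ of some still-unclassified point $i$. Removing $G.i$ therefore leaves a union of the remaining orbits, so the next chosen point lies in a new orbit. (The loop condition $|\texttt{unclassified}|>1$ also lets a final singleton be recorded as a fixed-point orbit with no further sampling; I would note this and treat that point's call as trivially successful.) Thus, on the event that all $m$ calls are correct, the output is exactly $\Delta$.

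Second, fix a per-call failure tolerance $\beta$ with $(1-\beta)^m \ge 1-\alpha$, that is, $\beta = 1-\sqrt[m]{1-\alpha}$. By \autoref{thm:single_orbit_theorem}, a single call errs with probability at most $\beta$ once its batch has size at least $N(\beta/n,\frac{1-\widetilde{p}}{2n})$. The displayed bound has $\frac{1-\sqrt[m]{1-\alpha}}{2n}$ as its first argument. This uses a tolerance $\beta/2$, giving extra slack (for instance, to split the two error types in the proof of \autoref{thm:single_orbit_theorem}). Since $N(\cdot,\delta)$ is decreasing in its first argument, this only strengthens the single-call guarantee. By independence, all $m$ calls succeed with probability at least $(1-\beta)^m = 1-\alpha$. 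The total sample count is $m$ times the per-call batch size, which gives the first expression.

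Third, I need the comparison with the $n$-version. This requires that $m\,N\!\left(\frac{1-(1-\alpha)^{1/m}}{2n},\delta\right)$ is nondecreasing in $m$, applied with $m\le n$. I would write $N(x,\delta) = -\log(x)/(2\delta^2)$. Then I would check that $m\mapsto -m\log\bigl(1-(1-\alpha)^{1/m}\bigr)$ increases. Both factors increase with $m$: as $m$ grows, $(1-\alpha)^{1/m}$ increases toward $1$, so $1-(1-\alpha)^{1/m}$ decreases and its negative log increases. The product of two positive increasing quantities is increasing, so the bound with $n$ holds. Note that the statement's ``$N\ge \cdots \le \cdots$'' should be read as saying that the $n$-version is a uniform sufficient sample size, since $m$ is unknown.

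The main obstacle is the first step. The behavior of the algorithm after a \emph{failed} call is messy, since it may miss points or merge orbits. The argument avoids analyzing that behavior by bounding the failure probability only through the union of single-call failures. To keep this rigorous, the adaptive choice of $i$ must depend only on previous batches, so that each new batch remains independent of the past and the conditional success probability is at least $1-\beta$.
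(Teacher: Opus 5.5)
Your proposal is correct and follows essentially the same route as the paper: split the run into $m$ single-orbit calls with per-call confidence $\sqrt[m]{1-\alpha}$, invoke \autoref{thm:single_orbit_theorem}, and pass to the $n$-version using $m\le n$ together with the fact that $N(\cdot,\delta)$ is decreasing in its first argument. The differences are minor. The paper lets each call's sample size depend on the number $\widehat{n_i}$ of still-unclassified points, and it bounds each call's size by the $n$-version rather than arguing monotonicity in $m$. Your explicit handling of fresh batches, adaptive conditioning, and the leftover singleton is more careful than the paper's sketch.
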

\begin{proof}
\autoref{alg:OrbitRecovery} recovers orbits by iteratively calling \autoref{alg:SingleOrbitRecovery} on each of the $m$ orbits, which \emph{a priori}, could be the finest partition with $m=n$. 

To obtain confidence level $1-\alpha$ on a process which requires $m$ subprocesses to be successful with confidence level $1-\widehat{\alpha}$, then one must take $\widehat{\alpha} \leq 1-\sqrt[m]{1-\alpha}$. Applying \autoref{thm:single_orbit_theorem} to this confidence level, we see that it is sufficient to use $N_i = N\left(\frac{1-\sqrt[m]{1-\alpha}}{2\widehat{n_i}},\frac{1-\widetilde{p}}{2\widehat{n_i}}\right)$ samples when classifying the $i$-th orbit where $\widehat{n_i}$ is the number of points of $[n]$ yet to be classified. The worst-case classification order of the orbits is to classify the orbits in increasing orbit-size, say $|\Delta_1|=n_1 \leq |\Delta_2|=n_2 \leq \cdots \leq |\Delta_m|=n_m$ in which case $ \widehat{n_i} = n-\sum_{j=1}^{i-1}n_i$. Equipped with this notation, \autoref{alg:OrbitRecovery} uses $N_i$ samples at step $i$. Since 
\[N_i \leq N\left(\frac{1-\sqrt[m]{1-\alpha}}{2\widehat{n_i}},\frac{1-\widetilde{p}}{2\widehat{n_i}}\right) \leq N\left(\frac{1-\sqrt[n]{1-\alpha}}{2n},\frac{1-\widetilde{p}}{2n}\right)\]
the result follows. 
\end{proof}

If $\Delta_i$ is an orbit of $G$, then one may sample from the transitive constituent $G^{\Delta_i}$ using $X(G,p)$ as follows. First define $\mathcal P$ to be the property of membership in the Young subgroup $S_{\Delta_i} \times S_{[n]-\Delta_i}$. This property gives the sampling procedure $X_{\mathcal P}$ which remains uniform on $G$. Moreover, the map $\psi_i:G \to G^{\Delta_i}$ which takes a permutation to its action on $\Delta_i$ is well-defined on any observation of $X_{\mathcal P}$. The distribution $\psi_i(X_{\mathcal P})$ on $S_{\Delta_i}$ is uniform when restricted to $G^{\Delta_i}$ by \autoref{lem:uniform_homomorphism}. We call $\mydef{X^{\Delta_i}}=\psi_i(X_{\mathcal P})$ a \mydef{transitive constituent sampler}.  The following method uses the orbit refining test and the transitivity test on $X^{\Delta_i}$ to confirm an orbit $\Delta_i$ of $G$ in ``two directions''.

\begin{algorithm}[!htpb]
  \SetKwIF{If}{ElseIf}{Else}{if}{then}{elif}{else}{}
  \DontPrintSemicolon
  \SetKwProg{Orbit Confirmation}{Orbit Confirmation}{}{}
  \LinesNotNumbered
  \KwIn{$(X,\Delta_i,N_1,N_2)$ \\ $\bullet$ The ability to sample from the distribution of $X$ \\ $\bullet$ $\Delta_i \subseteq [n]$ \\ $\bullet$  $(N_1,N_2)$ a pair of sample sizes\\}
  \KwOut{\texttt{true} if $\Delta_i$ is an orbit of $G$ and \texttt{false} otherwise}
  \nl Run the orbit refining test (with $N_1$ samples) on $H=S_{\Delta_i}\times S_{[n]-\Delta_i}$  and $X$ to confirm that $\Delta_i$ is a union of orbits. If not, \textbf{return} \texttt{false}\;
  \nl Apply the transitivity test \autoref{alg:transitivitytest} to $X^{\Delta_i}$ to determine if $G$ acts transitively on $\Delta_i$. \;
  \nl \textbf{return} \texttt{true} if the transitivity test passes and \texttt{false} otherwise.
  \caption{Orbit Confirmation\label{alg:OrbitConfirmation}}
\end{algorithm}

\begin{theorem} For any $G \leq S_n$ and  $\Delta_i \subset [n]$,
\autoref{alg:OrbitConfirmation} determines if $\Delta_i$ is an orbit of $G$ with probability at least $1-\alpha$ when $p < \widetilde{p}$ using $N=N_1+N_2$ samples of $X$, where 
\[
 N_1 \geq N\left(1-\sqrt{1-\alpha},\frac{1-\widetilde{p}}{4}\right) \quad \text{ and } N_2 \geq  N\left(1-\sqrt{1-\alpha},\frac{1-\widetilde{p}}{2|\Delta_i|}\right).
\]
\end{theorem}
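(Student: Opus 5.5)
The plan is to treat \autoref{alg:OrbitConfirmation} as two independent hypothesis tests run on disjoint batches of observations, each with failure probability at most $\widehat{\alpha}=1-\sqrt{1-\alpha}$. Then the probability that both answer correctly is at least $(1-\widehat{\alpha})^2=1-\alpha$. Independence holds because the $N_1$ samples used in step 1 are disjoint from the samples used to build the $N_2$ observations of $X^{\Delta_i}$ in step 2. The correctness of the final output then follows from the logical decomposition
\[
\Delta_i \text{ is an orbit of } G \iff \bigl(G \leq S_{\Delta_i}\times S_{[n]-\Delta_i}\bigr) \text{ and } \bigl(G^{\Delta_i} \text{ is transitive}\bigr).
\]
A subtlety is that when step 1 correctly returns \texttt{false}, step 2 is never run, and the output is already correct. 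So I only need the conditional correctness of step 2 on the event that $\Delta_i$ is a union of orbits. I will bound the overall failure probability by the failure probability of step 1 plus that of step 2 on this event, or use the product form, which is stronger.

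For step 1, I will invoke \autoref{thm:subgroup_test} with $H=S_{\Delta_i}\times S_{[n]-\Delta_i}$, a proper subgroup whenever $\Delta_i$ is a nonempty proper subset; the case $\Delta_i=[n]$ is trivial. By \autoref{lem:subgroup_bound} the two expected values of $[X\in H]$ differ by at least $\frac{1-p}{2}$. So the mean-threshold distinguisher of \autoref{lem:hoeffding_distinguisher} has half-gap $\delta\geq\frac{1-\widetilde p}{4}$. Hence $N_1\geq N\bigl(\widehat\alpha,\frac{1-\widetilde p}{4}\bigr)$ gives error at most $\widehat\alpha$. The only thing to check is that the threshold $c$ chosen in \autoref{alg:subgrouptest} sits in the gap with half-gap at least $\frac{1-\widetilde p}{4}$, not merely $\frac{1-2\widetilde p}{4}$. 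I will either recompute with the midpoint $c=\frac{1}{2}\bigl(1-\widetilde p+\frac{\widetilde p|H|}{n!}+\frac{1-\widetilde p}{2}+\frac{\widetilde p|H|}{n!}\bigr)$ or note that the paper's midpoint gives this gap.

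For step 2, conditional on $\Delta_i$ being a union of orbits, the sampler $X^{\Delta_i}=\psi_i(X_{\mathcal P})$ is well defined with $\mathcal P\colon \sigma\in H$. By \autoref{lem:pmf_sample_ed} and \autoref{lem:uniform_homomorphism}, it is again a mixture: uniform on $G^{\Delta_i}$ with probability $1-q_{\mathcal P}'$, where $q_{\mathcal P}'$ is the probability that the sample is not in $G$. Otherwise it is uniform on $S_{\Delta_i}$, since the error part of $X_{\mathcal P}$ is uniform on $H$ and projects uniformly onto $S_{\Delta_i}$. The main obstacle is to verify that this is of the form $X(G^{\Delta_i},p')$ with $p'\leq p<\widetilde p$. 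Writing the error component of $X_{\mathcal P}$ as uniform on $H$ (including the part landing in $G$) gives $p'=\frac{p|H|/n!}{1-p(1-|H|/n!)}\leq p$, by the computation in \autoref{lem:pmf_sample_ed}. Granting this, \autoref{thm:transitivity_test_thm} with $k=1$ in degree $|\Delta_i|$ shows that $N_2\geq N\bigl(\widehat\alpha,\frac{1-\widetilde p}{2|\Delta_i|}\bigr)$ observations of $X^{\Delta_i}$ give error at most $\widehat\alpha$. Combining the two steps yields confidence $1-\alpha$. I will remark that $N_2$ counts observations of $X^{\Delta_i}$; each costs an expected $\frac{1}{1-p(1-B)}$ draws of $X$, by \autoref{lem:pmf_sample_ed}.
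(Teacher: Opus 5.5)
Your decomposition matches the paper's. You run two stages on disjoint batches, each with error at most $\widehat\alpha=1-\sqrt{1-\alpha}$, combine them through independence, and take the constituent guarantees from \autoref{thm:subgroup_test} and \autoref{thm:transitivity_test_thm}. Only the product form gives $1-\alpha$; the additive bound gives $2\widehat\alpha>\alpha$. Your second stage is correct and more careful than the paper's. Given $G\le H$, write $h=|H|/n!=\binom{n}{|\Delta_i|}^{-1}$. Then $X_{\mathcal P}$ is the mixture $(1-p')U_G+p'U_H$ with $p'=\frac{ph}{1-p(1-h)}\le p$. Hence $X^{\Delta_i}=X(G^{\Delta_i},p')$, and the margin $\frac{1-\widetilde p}{2|\Delta_i|}$ is justified.

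The gap is in the first stage, exactly where you flagged it, and neither proposed fix closes it. The separation $\frac{1-p}{2}$ of \autoref{lem:subgroup_bound} compares the two families at the \emph{same} $p$. The threshold, however, may depend only on $\widetilde p$. What you need is the gap between two quantities taken over all $p<\widetilde p$:
the infimum of the family-A mean, which is $1-\widetilde p(1-h)$; and
the supremum of the family-B mean, which is $\frac12$.
The value $\frac12$ is attained by $p=0$ and $G=\langle(a\,b)\rangle$ with $a\in\Delta_i$, $b\notin\Delta_i$, since then $[G:G\cap H]=2$.

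So no $p$-independent threshold has margin above $\frac14-\frac{\widetilde p(1-h)}{2}$. This is strictly less than $\frac{1-\widetilde p}{4}$, because $h\le\frac13$ for $n\ge 3$. The paper's $c$ attains exactly this smaller margin, so your "paper's midpoint" option fails.

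Your alternative $c'=\frac{3(1-\widetilde p)}{4}+\widetilde p h$ is the midpoint at $p=\widetilde p$ only. The family-B mean $\frac{1-p}{2}+ph$ decreases in $p$. In the example above its margin is $c'-\frac12=\frac14-\widetilde p\bigl(\frac34-h\bigr)$. This is negative once $\widetilde p>\frac{1}{3-4h}$, and the test then wrongly accepts with probability tending to $1$.

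If $\widetilde p(1-h)>\frac12$, the two ranges of means even overlap. The trivial group with $\Delta_i=\{a\}$ and $p=\frac{1}{2(1-h)}$ also has mean $\frac12$. In that case no $N_1$ suffices.

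What your argument proves, and what the paper's bare citation of \autoref{thm:subgroup_test} proves, is the statement with $N_1\ge N\bigl(\widehat\alpha,\frac{1-2\widetilde p}{4}\bigr)$. The margin $\frac14-\frac{\widetilde p(1-h)}{2}$ gives a slightly better bound. Either way you need the hypothesis $\widetilde p\le\frac12$ inherited from \autoref{thm:subgroup_test}. The constant $\frac{1-\widetilde p}{4}$ in the statement appears to be a slip, so you should prove this corrected version instead of trying to reach it.
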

\begin{proof}
As in \autoref{thm:orbit_recovery_theorem}, this process has two steps, so to obtain confidence level $1-\alpha$ in their concatenation, we must have $1-\sqrt{1-\alpha}$ confidence in each constituent algorithm. The relevant bounds for the constituents are given in \autoref{thm:subgroup_test} and \autoref{thm:transitivity_test_thm}. We remark that the bound may be tightened by using $\mathcal P = \sigma \in S_{\Delta_i} \times S_{[n]-\Delta_i}$ sample error detection to reduce $\widetilde{p}$ for~$N_2$. 
\end{proof}

The transitive constituent samplers may be used in replacement of $X$ in any group recovery algorithm (e.g. \autoref{alg:main_algorithm}) to recover the corresponding transitive constituents of $G$. 
\begin{algorithm}[!htpb]
  \SetKwIF{If}{ElseIf}{Else}{if}{then}{elif}{else}{}%
  \DontPrintSemicolon
  \SetKwProg{Transitive Constituent Recovery}{Transitive Constituent Recovery}{}{}
  \LinesNotNumbered
  \KwIn{$(X,\Delta,\mathcal A)$ \\ $\bullet$ The ability to sample from the distribution of $X$ \\ $\bullet$  $\Delta_1,\ldots,\Delta_m$ the orbits of $G$ \\
  $\bullet$ $\mathcal A$ an algorithm which recovers a group $G$ from some $X(G,p)$}
  \KwOut{The transitive constituents of $G$}
  \nl \textbf{return} $\{\mathcal A(X^{\Delta_i})\}_{i=1}^m$\;
  \caption{Transitive Constituent Recovery \label{alg:TransitiveConstituentRecovery}}
\end{algorithm}

The following heuristic algorithm disregards the dependence mentioned in \autoref{rem:dependence} and attempts to compute the orbits of $G$ using the $n \times n$ frequency table, recording frequencies of $X^{i \sim j}$ for $X_1,\ldots,X_N \sim X$.  After constructing this table, it finds a set \texttt{distinct} of pairs $(i,j)$ which are confidently deemed to be in distinct orbits. This provides information for the permutation error-detector
\[
\mathcal P: i \text{ and }j\text{ belong to distinct cycles of }\sigma \text{ for all }(i,j) \in \texttt{distinct}.
\] Equipped with this detector, one may sample again $N$ elements $X_1',\ldots,X_N'$ from $X_{\mathcal P}$ with a significant error-reduction. Again, the algorithm computes the frequency table and deems certain pairs to be in distinct orbits of $G$. Those pairs which are not deemed to be distinct form the edges of a graph whose connected components are returned. 

\begin{algorithm}[!htpb]
  \SetKwIF{If}{ElseIf}{Else}{if}{then}{elif}{else}{}%
  \DontPrintSemicolon
  \SetKwProg{Heuristic  Orbit Recovery}{Heuristic  Orbit Recovery}{}{}
  \LinesNotNumbered
  \KwIn{$(X,N,t)$ \\ $\bullet$ The ability to sample from the distribution of $X$ \\ $\bullet$  $N$ a number of samples \\ $\bullet$  A threshold $t$ (default $t=N\frac{\widetilde{p}}{2}$ for $\widetilde{p}>p$)\\ $\bullet$ (Optional) \texttt{Adaptive Version} or \texttt{Non-adaptive Version}}
  \KwOut{The orbits of $G$}
  \nl Sample $X_1,\ldots,X_N \sim X$ and construct the frequency matrix $T=(t_{ij})_{1 \leq i, j \leq n}$ of $X^{i \sim j}$\;
  \nl Declare those $(i,j)$ with $t_{ij}<t$ to be in distinct orbits.\;
  \nl Sample $X'_1,\ldots,X'_N \sim X_{\mathcal P}$ with $\mathcal P:\sigma.i \neq \sigma.j$ for $(i,j)$ in distinct orbits.\;
  \nl Create the new frequency matrix $T'=(t'_{ij})_{1 \leq i, j \leq n}$ and compute the connected components $C=(C_1,\ldots,C_k)$ of the graph whose edges are those $(i,j)$ such that $t'_{ij}>p/2$\;
  \nl \If{\textit{Non-adaptive}}{\textbf{return} C}
   \Else{
  \nl \If{\autoref{alg:OrbitConfirmation} passes }{\textbf{return} $C$}
  \Else{ \nl \textbf{if} orbit confirmation failed due to orbit refinement \textbf{then} decrease $t$\;
  \nl \textbf{if} orbit confirmation failed due to transitivity test \textbf{then} increase $t$\;
  \nl Go to \texttt{3} 
  } }
  \caption{Heuristic Orbit Recovery \label{alg:HeuristicOrbitRecovery}}
\end{algorithm}

\begin{example}
Consider $\mathbb{Z}_2^2$ acting on $\{1,2,3,4\}$ as the symmetries of a rectangle and $D_8$ acting on $\{5,6,7,8\}$ as the symmetries of a square and define $G = \mathbb{Z}_2^2 \times D_8$ acting on $\{1,2,3,4\} \cup \{5,6,7,8\}$. We fix $p = 0.25 \leq \widetilde{p} = 0.25 $. With $N=100$  we obtain the frequency matrix 
\[
\begin{bmatrix}
100 & 37 & 31 & 33 & 12 & 12 & 10 & 10 \\
37 & 100 & 33 & 30 & {\color{red}{13}} & 12 & 10 & {\color{red}{13}} \\
31 & 33 & 100 & 35 & 10 & 12 & {\color{red}{14}} & {\color{red}{14}} \\
33 & 30 & 35 & 100 & 12 & {\color{red}{13}} & 9 & {\color{red}{16}} \\
12 & {\color{red}{13}} & 10 & 12 & 100 & 42 & 56 & 45 \\
12 & 12 & 12 & {\color{red}{13}} & 42 & 100 & 45 & 49 \\
10 & 10 & {\color{red}{14}} & 9 & 56 & 45 & 100 & 42 \\
10 & {\color{red}{13}} & {\color{red}{14}} & {\color{red}{16}} & 45 & 49 & 42 & 100
\end{bmatrix}
\]
 whose $(i,j)$-th entry is the number of observations of $X$ for which $i$ and $j$ share a cycle. Here, $t = 12.5$. Thus, \autoref{alg:HeuristicOrbitRecovery} recognizes all $(i,j)$ which do not share an orbit other than those colored in red in the above matrix. Resampling with the permutation property which requires the points in distinct orbits to be in distinct cycles reduces the empirical error rate to be nearly zero. Our new frequency table is 
 \[
 \begin{bmatrix}
100 & 27 & 31 & 22 & 0 & 0 & 0 & 0 \\
27 & 100 & 22 & 31 & 1 & 0 & 0 & 0 \\
31 & 22 & 100 & 28 & 0 & 0 & 0 & 0 \\
22 & 31 & 28 & 100 & 0 & 0 & 0 & 0 \\
0 & 1 & 0 & 0 & 100 & 37 & 54 & 41 \\
0 & 0 & 0 & 0 & 37 & 100 & 40 & 53 \\
0 & 0 & 0 & 0 & 54 & 40 & 100 & 38 \\
0 & 0 & 0 & 0 & 41 & 53 & 38 & 100
\end{bmatrix}
 \] 
 The algorithm finishes by declaring only those pairs  with frequencies above $12.5$ as sharing an orbit and returns the connected components of the associated graph, as shown in \autoref{fig:orbitgraph}.
\begin{figure}[!htpb]\raisebox{-68pt}{
\includegraphics[scale=0.18]{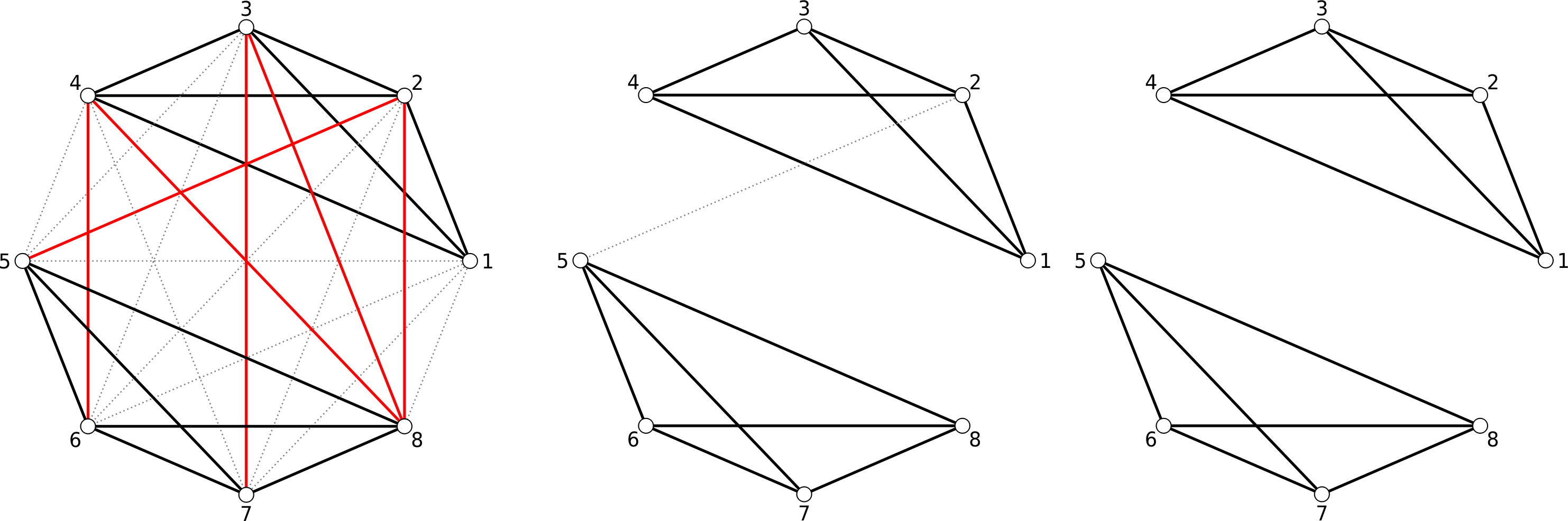}}
\caption{(Left) Gray edges represent pairs $(i,j)$ which rarely share a cycle in samples of $X$. Red edges are pairs which are in distinct orbits but are not recognized at the first step as such. (Center) The same graph on a new sample which performs error-detection based on the information indicated by gray edges in the left picture. (Right) The final output of \autoref{alg:HeuristicOrbitRecovery} are the connected components of this graph. }
\label{fig:orbitgraph}
\end{figure}
\end{example}

\subsection{Block recovery and primitivity test} 
Primitivity of a transitive permutation group $G$, and the minimal block structures of $G$, can be read-off from the orbits of the action $G^{(2)}$ of $G$ on ordered pairs, called \mydef{orbitals}. Given an orbital $\nabla$, the directed graph \mydef{\textrm{Graph}$(\nabla)$} on $[n]$ has an edge $i \mapsto j$ if and only if $(i,j) \in \nabla$. The orbital containing $(1,1)$ is the \mydef{diagonal orbital} whose graph is $n$ loops. 

\begin{lemma}[see Sections 3.2 and 3.6B of \cite{DixonPermutationGroups}]
\label{lem:orbitals} Let $G \leq S_n$ be transitive. If $\nabla$ is the orbital of $(i,j)$ for $i \neq j$ and $\textrm{Graph}(\nabla)$ is disconnected, then the vertices of the component containing $(i,j)$ form the smallest block of $G$ containing $i$ and $j$.
\end{lemma}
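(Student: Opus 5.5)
We prove \autoref{lem:orbitals} by working with the undirected connectivity structure of $\textrm{Graph}(\nabla)$ and showing that its connected components form a $G$-invariant partition of $[n]$. Since $\nabla$ is an orbit of $G^{(2)}$, the edge set of $\textrm{Graph}(\nabla)$ is preserved by every $g\in G$: if $(a,b)\in\nabla$ then $(g(a),g(b))\in\nabla$. So each $g$ is a graph automorphism and permutes the connected components. Transitivity of $G$ makes this permutation action on components transitive, so all components have the same size. Hence the components form a $G$-invariant partition, i.e. a system of blocks. Because the graph is assumed disconnected, this partition has more than one part. Because $i\neq j$ and $(i,j)$ is an edge, the part $C$ containing $i$ has at least two points. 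So the partition is nontrivial, and $C$ is a block containing both $i$ and $j$.

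Next we show minimality. Let $B$ be any block of $G$ containing $i$ and $j$; we must show $C\subseteq B$. The key observation is that every edge $(a,b)$ of $\textrm{Graph}(\nabla)$ has the form $(g(i),g(j))$ for some $g\in G$. Since $B$ is a block and $g(B)$ contains $g(i)=a$ and $g(j)=b$, both endpoints of every edge lie in a common block $g(B)$ of the block system generated by $B$, namely $\{g(B)\}_{g\in G}$. Therefore each edge stays inside a single part of that partition. Walking along any undirected path from $i$ keeps us inside the part containing $i$, which is $B$. So the whole component $C$ lies in $B$, and $C$ is the smallest block containing $i$ and $j$.

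The main subtlety is the notion of connectivity. $\textrm{Graph}(\nabla)$ is directed, so we must argue with weak (undirected) components; the block argument above only uses that edges respect blocks, so direction is irrelevant. We should also note, citing Dixon, that for finite transitive groups weak and strong connectivity of orbital graphs coincide. This follows because the $g$ in $G$ with $(i,j)\in\nabla$ has finite order, so reversing an edge can be replaced by a forward path. With that remark, the statement holds under either reading of ``component,'' which completes the plan.
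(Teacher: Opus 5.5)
Your proof is correct, and it is essentially the standard argument behind the Dixon--Mortimer sections the paper cites in place of a proof: $G$ acts by automorphisms of $\textrm{Graph}(\nabla)$, so its components form a nontrivial block system. Since every arc has the form $(g(i),g(j))$ for some $g\in G$, every arc lies inside a single translate of any block $B\ni i,j$, which forces the component of $i$ into $B$. The one loose phrase is in your side remark on weak versus strong connectivity: the element to use is, for an arc $(a,b)\in\nabla$, some $g\in G$ with $g(a)=b$, so that $a\to g(a)\to g^2(a)\to\cdots\to g^{m}(a)=a$ (with $m$ the order of $g$) is a directed cycle through that arc; this does not affect your main argument.
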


Thus, in order to compute the blocks of $G$, it suffices to compute the orbits of $G^{(2)}$ and appeal to \autoref{lem:orbitals}. In order to do this using $X$ and the orbit recovery algorithm \autoref{alg:OrbitRecovery}, one can interpret a sample of $X$ as acting on pairs of distinct elements of $[n]$, inducing a distribution we call $X^{(2)}$ on $S_{n(n-1)}$. We remark that $X^{(2)}$ does not have the form $(1-p)U_{G^{(2)}} + p U_{S_{n(n-1)}}$ since the image of $S_n$ acting on pairs of distinct elements is an isomorphic copy of $S_n$ properly contained in $S_{n(n-1)}$. Thus, there is some amount of uncertainty regarding whether our rigorous confidence bounds apply. We leave a more serious assessment of a block recovery test to future research. 

\begin{example}
\label{ex:primitive_not_two_transitive}
The cyclic group $C_5$ acting on $\{1,\ldots,5\}$ is primitive but not $2$-transitive. The graphs of the orbitals are given in the right of \autoref{fig:orbitals}. Since each is connected, we conclude that $C_5$ is primitive, however, $C_5$ is not $2$-transitive since there is more than one non-diagonal orbital.
Some orbital graphs of $D_6$ acting on $\{1,\ldots,6\}$ are not connected, as seen in the left of \autoref{fig:orbitals}. Thus $D_6$ is not primitive: it has block structures $\{\{1,3,5\},\{2,4,6\}\}$ and $\{\{1,4\},\{2,5\},\{3,6\}\}$. 
\begin{figure}[!htpb]
\begin{tikzpicture}[scale=0.6, every node/.style={circle, draw, fill=black, inner sep=2pt}, >={Stealth[scale=1.2]}]

  \begin{scope}[shift={(-3cm,0cm)}] 
\foreach \i in {1,...,5} {
    \node (G1\i) at (72*\i:2) {};
}
\draw[->] (G11) -- (G12);
\draw[->] (G12) -- (G13);
\draw[->] (G13) -- (G14);
\draw[->] (G14) -- (G15);
\draw[->] (G15) -- (G11);
\end{scope}

  \begin{scope}[shift={(2cm,0cm)}] 
\foreach \i in {1,...,5} {
    \node (G2\i) at (72*\i:2) {};
}
\draw[->] (G21) -- (G25);
\draw[->] (G25) -- (G24);
\draw[->] (G24) -- (G23);
\draw[->] (G23) -- (G22);
\draw[->] (G22) -- (G21);
\end{scope}

  \begin{scope}[shift={(7cm,0cm)}] 
\foreach \i in {1,...,5} {
    \node (G3\i) at (72*\i:2) {};
}
\draw[->] (G31) -- (G33);
\draw[->] (G33) -- (G35);
\draw[->] (G35) -- (G32);
\draw[->] (G32) -- (G34);
\draw[->] (G34) -- (G31);
\end{scope}

  \begin{scope}[shift={(12cm,0cm)}] 
\foreach \i in {1,...,5} {
    \node (G4\i) at (72*\i:2) {};
}
\draw[->] (G41) -- (G44);
\draw[->] (G44) -- (G42);
\draw[->] (G42) -- (G45);
\draw[->] (G45) -- (G43);
\draw[->] (G43) -- (G41);
\end{scope}

\begin{scope}[shift={(5cm,-6cm)}]
\foreach \i in {1,...,6} {
    \node (G5\i) at (60*\i:-2) {};
}
\draw[black] (G51) -- (G52) -- (G53) -- (G54) -- (G55) -- (G56) -- (G51);
\end{scope}

\begin{scope}[shift={(0cm,-6cm)}]
\foreach \i in {1,...,6} {
    \node (G6\i) at (60*\i:-2) {};
}
\draw[black] (G61) -- (G63);
\draw[black] (G63) -- (G65);
\draw[black] (G65) -- (G61);
\draw[black] (G62) -- (G64);
\draw[black] (G64) -- (G66);
\draw[black] (G66) -- (G62);
\end{scope}

\begin{scope}[shift={(10cm,-6cm)}]
\foreach \i in {1,...,6} {
    \node (G7\i) at (60*\i:-2) {};
}
\draw[black] (G71) -- (G74);
\draw[black] (G72) -- (G75);
\draw[black] (G73) -- (G76);
\end{scope}

\end{tikzpicture}
\caption{The (directed) graphs of orbitals of $C_5$ (top) and $D_6$ (bottom).}
\label{fig:orbitals}
\end{figure}
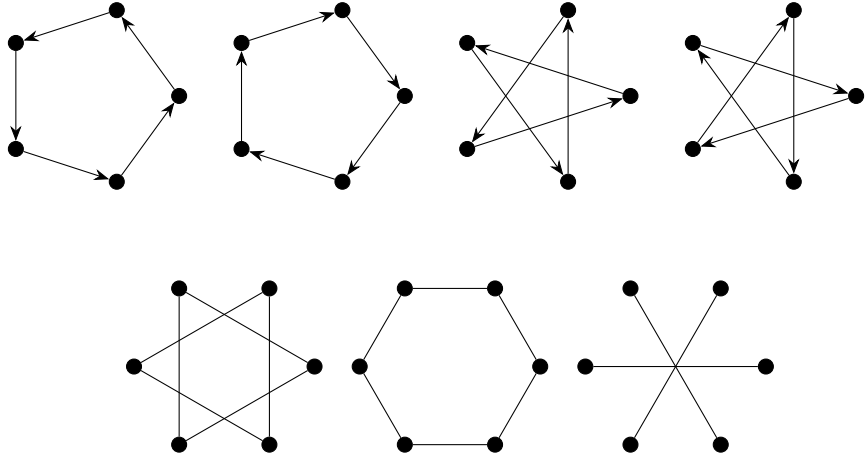
\end{example}
{\color{white}{.}}

From this construction, a cheap answer to the question of determining the block structures of $G$ (and in particular, primitivity of $G$) is to apply \autoref{alg:OrbitRecovery} to the random variable $X^{(2)}$ which interprets an observation of $X$ as acting on distinct ordered pairs, thus representing $G^{(2)}$.

\begin{algorithm}[!htpb]
  \SetKwIF{If}{ElseIf}{Else}{if}{then}{elif}{else}{}%
  \DontPrintSemicolon
  \SetKwProg{Minimal Block Recovery}{Minimal Block Recovery}{}{}
  \LinesNotNumbered
  \KwIn{$(X,N)$ \\ $\bullet$ The ability to sample from the distribution of $X=X(G,p)$ for transitive $G$  \\ $\bullet$  $N$ a number of samples}
  \KwOut{The minimal block structures of $G$}
  \nl Let $X'$ be the image of $X$ under $S_n \hookrightarrow S_{n\cdot (n-1)}$ defined via  action on distinct ordered pairs\;
  \nl Determine the orbits of $G^{(2)}$ (thus the orbitals of $G$) by applying \autoref{alg:OrbitRecovery} to $(X',N)$. \;
  \nl For each orbital $\nabla$, construct $\textrm{Graph}(\nabla)$ and compute its connected components $\{V_i\}_{i=1}^r$\;
  \nl \textbf{return} the vertices $\{V_i\}_{i=1}^r$ for each disconnected $\textrm{Graph}(\nabla)$ 
  \caption{Minimal Block Recovery \label{alg:BlockRecovery}}
\end{algorithm}
A primitivity test \autoref{alg:PrimitivityTest} appears as a byproduct in the trivial way.
\begin{algorithm}[!htpb]
  \SetKwIF{If}{ElseIf}{Else}{if}{then}{elif}{else}{}%
  \DontPrintSemicolon
  \SetKwProg{Primitivity Test }{Primitivity Test }{}{}
  \LinesNotNumbered
  \KwIn{$(X,N)$ \\ $\bullet$ The ability to sample from the distribution of $X=X(G,p)$ for transitive $G$ \\ $\bullet$  $N$ a number of samples}
  \KwOut{\texttt{true} if $G$ is primitive and \texttt{false} otherwise}
  \nl Run \autoref{alg:BlockRecovery}\;
  \nl \textbf{return} \texttt{false} if \autoref{alg:BlockRecovery} returns a block system and \texttt{true} otherwise. 
  \caption{Primitivity Test \label{alg:PrimitivityTest}}
\end{algorithm}

\begin{remark}
\label{rem:primitive_implied_by_two_transitive} The $2$-transitivity test \autoref{alg:transitivitytest} is a one-sided primitivity test since $2$-transitivity implies primitivity. As shown in \autoref{ex:primitive_not_two_transitive}, the converse fails. Another one-sided test approximates the proportion of imprimitive elements in $G$ via $X$. If few are observed, then $G$ is primitive, but the converse fails since there are primitive groups comprised entirely of imprimitive elements \cite{Arajuo}. 
\end{remark}

\section{Group recovery}
\label{sec:group_recovery}

We discuss how to combine our algorithms into our main method for recovering $G$ from $X(G,p)$. In particular, we discuss how our algorithms imply certain group error-detectors and permutation error detectors. We give some qualitative descriptors about how fast our hypothesis tests run, in terms of the number of samples to obtain high confidence. We also give an idea of how fast it is to evaluate the error-detectors we obtain with existing algorithms.

\subsection{Updating Bounds and error detectors:} In  \autoref{fig:flowchart}, the box ``update bounds and error detectors'' indicates that after each test, one should book-keep the following information earned:  any  implied group error detectors $\mathcal Q$ or  permutation error detectors $\mathcal P$ as well as any bounds on $p$,  $|G|$, or $B = {|S_{\mathcal P}|}/{n!}$.
Each hypothesis test in \autoref{sec:property_recovery} is presented as recovering a group-theoretic property $\mathcal Q$ about $G$, as given in \autoref{tab:Qtable}.

\begin{table}[!htpb]
\centering
\renewcommand{\arraystretch}{1}
{
\begin{tabular}{|l|| c| c| c| c|}
\hline
Algorithm / Output 
& $\mathcal Q$ 
& $|G|$ bound 
& Test Speed 
& $\mathcal Q(\widehat{G})$ speed \\
\hline\hline
\textbf{Giant Test} & & & & \\ \hline 
\quad true  & $\widehat G$ is a giant &  & Fast & Fast \\
\quad false & $\widehat G$ is a non-giant & $|G|\leq (n-1)!$ &  & Fast \\
\hline\hline
\textbf{Subgroup\_H Test} & & & & \\ \hline 
\quad true  & $\widehat G\leq H$ & $|G|\leq |H|$ & Fast & BSGS$(H)$ \\
\quad false & $\widehat G \nleq H$  & & &BSGS$(H)$  \\
\hline\hline
\textbf{$k$-Transitivity} & & & & \\ \hline 
\quad true $(k = 1)$ & $\widehat G$ is transitive & $|G|\leq 4^n$ & Slow &  Very Fast\\
\quad false $(k = 1)$  & $\widehat G$ is intransitive & &  & Very Fast\\
\quad true $(k > 1)$ & $\widehat G$ is $k$-transitive \& primitive & & Very Slow  & Fast \\
\quad false $(k > 1)$  & $\widehat G$ is not $k$-transitive & &  &  Fast  \\
\hline\hline
\textbf{Orbit Agreement} & & & & \\ \hline 
\quad  true $(i,j)$  & $G.i=G.j$ &  &  Slow & Very Fast   \\ 
\quad  false $(i,j)$  & $G.i\neq G.j$ & $|G| \leq (n-1)!$ &  Slow & Very Fast   \\ 
\hline\hline
\textbf{Orbit Recovery} & & & & \\ \hline 
\quad  $\Delta$ & $\widehat G\leq S_{\Delta}=H$ & $|G|\leq |H|$ &  Very Slow & Very Fast   \\ 
(orbits) & $\widehat G^{\Delta_i}$ is transitive & & & Very Fast\\
\hline\hline
\textbf{Block Recovery} & & & & \\ \hline 
\quad $\mathcal B_1,\ldots,\mathcal B_k$ & $G \leq \bigcap_{i=1}^k \textrm{Wr}(\mathcal B_i)=H$ &$ |G|\leq |H|$ & Very Slow & BSGS$(\widehat{G})$\\
 (minimal blocks) & $\widehat G \lneq \textrm{Wr}(\mathcal B')$ for any $\mathcal B'\leq \mathcal B_i$  & & & BSGS($\widehat G$)\\
\hline\hline
\textbf{Primitivity Test} & & & & \\ \hline 
\quad true  & $\widehat G$ is primitive & & Very Slow & BSGS($\widehat G$) \\
\quad false & $\widehat G$ is imprimitive &$|G| \leq 2(n/2)!^2$  & & BSGS($\widehat G$) \\
\hline
\end{tabular}}
\caption{Information earned from algorithms. Group properties and bounds on $|G|$.}
\label{tab:Qtable}
\end{table}

Often a permutation property $\mathcal P$ is obtained as a byproduct of our algorithms. This information for each relevant algorithm is given in \autoref{tab:Ptable} under the assumption that $G$ was already deemed a non-giant. In addition to this information \autoref{tab:Qtable} and \autoref{tab:Ptable} summarize the speed of each hypothesis test, the speed of evaluating the property discovered, and any bounds earned on $|G|$ or $B=|S_{\mathcal P}|/n!$.  We write $\textrm{BSGS}(H)$ for the cost of computing a \textit{base and strong generating set} of a group $H$ via Shreier-Sims \cite{Sims}. Note that because some properties are difficult to evaluate, an implementation of our main algorithm may choose to effectively ignore them.

\begin{table}
{
\begin{tabular}{|l|| c| c| c|}
\hline
Algorithm / Output 
& $\mathcal P$ 
& $B=|S_{\mathcal P}|/n!$ bound 
& $\mathcal P(\widehat{G})$ speed \\
\hline\hline
\textbf{Subgroup\_H Test} & & & \\ \hline 
\quad true  & $\sigma \in H$ & $|H|/n!$ & BSGS$(H)$ \\
\quad \quad ($H=A_n$) & $\sigma$ is even & $|H|/n! = \frac 1 2$ & Very Fast \\ 
\quad \quad ($H=S_\Delta$) & cycles of $\sigma$ refine $\Delta$ & ${{n}\choose{|\Delta_1|,\ldots,|\Delta_m|}}^{-1}$ & Very Fast \\ 
\quad\quad  ($H=\textrm{Wr}(B_1,\ldots, B_k)$) &  $\sigma$ respects $\mathcal B$ & $(n/k)!^kk!/n!$ & Very Fast \\ 
\quad false & $\sigma \not\in H$ & & BSGS$(H)$  \\
\hline\hline
\textbf{Orbit Agreement} & & &  \\ \hline 
\quad  false $(i,j)$  & $i,j$ are in distinct cycles of $\sigma$ & $\frac 1 2$ & Very Fast   \\ 
\hline\hline
\textbf{$k$-Transitivity} & & & \\ \hline 
\quad true $(k = 1)$ & $\sigma$ is not a giant & $1-\textbf{Giant}(n)/n!$ & Slow \\
\quad false $(k = 1)$  & $\sigma$ is not an $n$-cycle & $1-\frac{1}{n}$& Very Fast \\
\quad true $(k > 2)$ & $\sigma$ is not Jordan & $1-\textbf{Jordan}(n)/n!$ & Slow \\
\hline\hline
\textbf{Primitivity Test} & & & \\ \hline 
\quad true  & $\sigma$ is not Jordan  & $1-\textbf{Jordan}(n)/n!$ & Slow \\
\hline
\end{tabular}}
\caption{Information on permutation properties and bounds on $B$ for \textbf{non-giant} groups.
}
\label{tab:Ptable}
\end{table}

The only bound left to discuss is that of $\widetilde{p}$ as an upper bound for $p$. Several of our hypothesis tests relate $p$ with the expected value of the relevant random variable, measured via a sample mean $\overline{\mu}$. \autoref{tab:papproximations} lists these relationships and summarizes how error  of $\overline{\mu}$ propagates to $p$. We omit the $p$-bounds obtained from a true giant test: it is cumbersome to write them and ultimately unhelpful.

\begin{table}[!htpb]
{\footnotesize{
\begin{tabular}{|l|| l|  c|}
\hline
Algorithm / Output 
& Random Variable/Relation 
&  $p$-bounds from $\bar\mu=\mu\pm\epsilon$ \\
\hline\hline
\textbf{Giant} &\quad \quad \quad \quad $\mathcal G(X_1,X_2)$&   \\ \hline 
\quad true  
& $\begin{array}{l}
\mu \leq (1-p+p^2)u(n)+p(1-p)
\\
\mu \geq (1-p+p^2)\ell(n) 
\end{array}$
&  (omitted)
\\ \hline 
\quad false  
& $\begin{array}{l}
\mu\leq 2p(1-p)+p^2u(n)
\\
\mu \geq p^2\ell(n) 
\end{array}$ 
&$ p \leq \sqrt{\frac{\bar{\mu}+\epsilon}{\ell(n)}}$
\\ \hline  \hline 
\textbf{Subgroup\_H}&\quad \quad \quad \quad $X \in H$ &   \\ \hline 
\quad $H \neq S_n$,  true  
& $\mu = (1-p)+\frac{p|H|}{n!}$ 
& $\displaystyle
p \in \left(
\frac{1-(\bar\mu+\epsilon)}{1-\frac{|H|}{n!}},
\;\;
\frac{1-(\bar\mu-\epsilon)}{1-\frac{|H|}{n!}}
\right)$
\\ \hline 
\quad false  
&  $\mu \leq \frac{1-p}{2}+\frac{p|H|}{n!}$  
& $\displaystyle
p \le
\frac{\frac12-(\bar\mu-\epsilon)}{\frac12-\frac{|H|}{n!}}$
\\ \hline  \hline 
\textbf{Transitivity}&\quad \quad \quad \quad$\textrm{Fix}(X)$&   \\ \hline 
\quad   $m>1$
& $\mu=(1-p)m+p$ 
& $\displaystyle
p \in \left(
\frac{\bar\mu-\epsilon-m}{1-m},
\;\;
\frac{\bar\mu+\epsilon-m}{1-m}
\right)$
\\ \hline  \hline 
\textbf{Single Orbit}&\quad \quad \quad \quad$X(i)=j$ &   \\ \hline 
\quad $|G.i|<n$ and  $i \sim j$ 
& $\mu =\frac{1-p}{|G.i|}+\frac p n$ 
& $\displaystyle
p \in \left(
\frac{\bar\mu-\epsilon-\frac1{|G.i|}}{\frac1n-\frac1{|G.i|}},
\;\;
\frac{\bar\mu+\epsilon-\frac1{|G.i|}}{\frac1n-\frac1{|G.i|}}
\right)$
\\ \hline 
\quad $|G.i|<n$ and   $i \not\sim j$ 
& $\mu =\frac p n$ 
& $\displaystyle
p \in \left(
n(\bar\mu-\epsilon),
\;\;
n(\bar\mu+\epsilon)
\right)$
\\ \hline
&\quad \quad \quad \quad$X(i)\in G.i$ &   \\ \hline 
\quad $|G.i|<n$ 
& $\mu =1-\frac p n (n-|G.i|)$ 
& $\displaystyle
p \in \left(
\frac{n(1-\bar\mu)-n\epsilon}{n-|G.i|},
\;\;
\frac{n(1-\bar\mu)+n\epsilon}{n-|G.i|}
\right)$
\\ \hline
\end{tabular}}}
\caption{Bounds on $p$ obtained from confidence intervals on test statistics. }
\label{tab:papproximations}
\end{table}

\begin{example} 
Continuing our example of $G=W(E_6) \leq S_{27}$, we take a fixed sample $\Sigma$ of $N=100$  i.i.d. observations of $X=X(G,0.25)$ and compute sample means as in \autoref{tab:fixedsample}. We use a bound of $p \leq \widetilde{p} = \frac{1}{3}$.  We note that the ${{100}\choose{2}}$ observations of $\mathcal G(X_1,X_2)$ are not independent, and so the associated statistical analysis is only heuristic. The confidence intervals are conservatively computed via Hoeffding's inequality. Note that the distinguishing threshold of the test is within these intervals for $\textrm{Fix}(X)$ and $X(1)=2$ indicating that $N=100$ samples is not sufficient to draw reliable conclusions. Moreover, the bounds on $p$ obtained by applying \autoref{tab:papproximations} are seen to be unhelpful as they do not bound $p$ any lower than the assumed bound of $\widetilde{p} =\frac 1 3$

\begin{table}[!htpb]
{\footnotesize{
\begin{tabular}{|r|l|l|l|l|l|}
 \hline
& $\langle X_i, X_j \rangle$ a giant & $X \in A_{27}$ & $X \in \textrm{AGL}(3,3)$ & $\textrm{Fix}(X)$ & $X(1)=2$ \\ \hline \hline 
\# Observations & $4950=bin(100,2) $& $100$ & $100$ & $100$ &$100$  \\ \hline 
Sample Mean  & $0.5828 $& $0.86$  & $0.0$ & $1.15$ & $0.04$ \\ \hline 
Upper bound low mean & $0.5513$ & $0.5$ &$0.5$ & $1.00$ & $0.0370$ \\ \hline
Lower bound high mean & $0.9702$ & $0.8333$ &  $0.8333$& $1.666$ & $0.0379$ \\ \hline
Distinguishing threshold  & $0.7608$& $0.6666$ &$0.6666$  & $1.333$ & $0.0375$  \\ \hline 
99\% Confidence Interval  & $(0.5596,0.6059)$ & $(0.6972,1.0]$ & $[0.0,0.1627)$& $[0.0, 5.545)
$& $[0.0,0.2027)$ \\ \hline 
Conclusion  & Not a giant & $G \leq A_{27}$ & $G \neq \textrm{AGL}(3,3)$ & Transitive? & $i \not\sim 2$? \\ \hline 
Bounds on $p$  & $p \leq 0.798$ & $p \in (0.0,0.6056)$ & $-$  & $-$  & $-$   \\ \hline 
\end{tabular}}}
\caption{Computation of sample means of some random variables on a fixed sample of $100$ samples, $0.75$ of them uniform from $W(E_6)$ and $0.25$ uniform from $S_{27}$. }
\label{tab:fixedsample}
\end{table}

\end{example}

\subsection{Structure of main algorithm}

We give the complete structure of our main algorithm, allowing for freedom in several implementation choices including which hypothesis tests to prioritize.

\begin{algorithm}[!htpb]
  \SetKwIF{If}{ElseIf}{Else}{if}{then}{elif}{else}{}%
  \DontPrintSemicolon
  \SetKwProg{Giant Test}{Giant Test}{}{}
  \LinesNotNumbered
  \KwIn{\\ $\bullet$ The ability to sample from the distribution of $X = X(G,p)$ for $G \leq S_n$\\
  \textbf{Optional:} \\
  $\bullet$ An error detector property $\mathcal P$ for permutations in $G$ \\
  $\bullet$ An error detector property $\mathcal Q$ for the group $G$ \\
  $\bullet$ A bound $\tilde{p}>p$  \quad 
  $\bullet$ A bound $\widetilde{M}$ on $\lceil \log_2(|G|) \rceil$  \quad 
  $\bullet$ A bound $\widetilde{B}$ on $B = \frac{|S_{\mathcal P}|}{n!}$   }
  \KwOut{$G$}
  \nl Determine if $G$ is a giant using \autoref{alg:gianttest}. \;
  \If{$G$ is a giant}
   			{\nl Determine if $G=A_n$ via \autoref{alg:subgrouptest} applied to $H=A_n$ \;
   			\nl \textbf{if} $G=A_n$ \textbf{return} $A_n$ \textbf{ else } \textbf{return} $S_n$.\;}

   	\Else{ 
   		\nl Update Bounds and Error Detectors \;
   		\nl \If{Success Rate Check Passes for some $k$}
   		{\nl Run \textbf{NiAGRA} on $X$ and $k$ with $\mathcal P$ and $\mathcal Q$ as error detectors, and a sufficiently high $N$ so that the the mode $\widehat{G}$ is $G$ with the desired confidence level \; 
   		\nl \textbf{return} $\widehat{G}$ \; }
   		\Else{
   			\nl Obtain another property $\mathcal P$ or $\mathcal Q$ from any of the following tests:
   			\begin{itemize}
   			\item Transitivity Test \quad $\bullet$ Orbit Recovery\quad $\bullet$Transitive Constituent Recovery
   			\item Block Recovery \quad $\bullet$ Primitivity Test\quad $\bullet$ Find Supergroup
   			\end{itemize}
   			\nl \textbf{goto} \texttt{5}\;}
	    }
  
  \caption{Main Algorithm\label{alg:main_algorithm}}
\end{algorithm}

\section{Experiments}
\label{sec:experiments}
For many of our tests and algorithms we give  precise expected values of our test statistics based on group theoretic properties of $G$. Thus, experiments involving these algorithms would only be showcasing the failure of Hoeffding's inequality to be tight. One exception, however, is \autoref{alg:findsupergroup} which finds a supergroup of $G$. While we provide rigorous bounds on the probability that this algorithm succeeds, we give no insight into its utility; for example, what is the probability that a super group is returned other than $S_n$?

Another avenue for experimentation is the setting when a user has access to a fixed sample $\{X_1,\ldots,X_N\}$ of $\textrm{i.i.d.}$ observations of $X$, but does not have the ability to sample additional observations of $X$. In this case, our proven sample bounds which guarantee certain confidence may exceed $N$, though the intuition behind our algorithms may still offer insight. 

\subsection{Supergroups}
Finding supergroups of $G$ reliably from $X(G,p)$ is the task of \autoref{alg:findsupergroup}. As suggested in \autoref{ex:27linesSupergroups}, it is rare that either of the following occurs:
\begin{itemize}
\item[a)] the returned group fails to be a supergroup, 
\item[b)] the returned group is a non-giant supergroup other than $G$.
\end{itemize}
 (a) is rare because of our conservative Hoeffding test: even with levels as low as $1-\alpha = 80\%$, the mean-threshold distinguisher is using many more samples than necessary to obtain the requested confidence level $1-\alpha$. The fact that (b) is rare is more structural. For example, if $G$ is a maximal non-giant then (b) can never occur.  We provide experimental evidence for the rarity of these events based on whether $G$ is intransitive, imprimitive, or primitive.

There are $1593$ conjugacy classes of subgroups of $S_{10}$. For each, we take a representative $G$ and run \textit{Find Supergroup} $N=1000$ times with $p=\frac 1 3$. We check if $G \leq H$ symbolically to speed-up the experiment and mimic low error rates. We then count how many results are (a) giants, (b) $G$, or (c) a group between $G$ and $S_n$. We plot this triple of proportions in the $2$-simplex shown in \autoref{fig:triangledata} for intransitive groups, imprimitive (transitive) groups, and primitive groups. 
 
\begin{figure}[!htpb]
\includegraphics[scale=0.27]{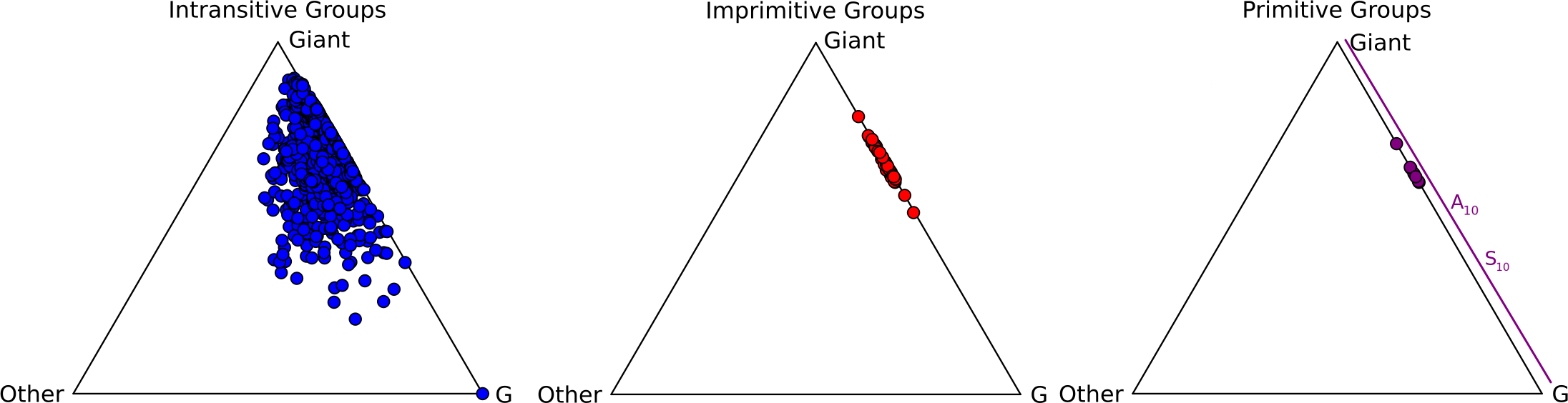}
\caption{The proportion of $N=1000$ runs of \textit{FindSupergroup} on each subgroup $G$ of $S_{10}$ which are (a) giants, (b) $G$ itself, or (c) a group in between. }
\label{fig:triangledata}
\end{figure} 
 
We provide some superlatives about our experiment below. 
\begin{itemize}
\item Among non-trivial intransitive groups, \textit{Find Supergroup} found $G$ most frequently for 
\begin{center}
 $G = C_7 \times S_1 \times S_1 \times S_1$
 \end{center}
\item Among non-giants, \textit{Find Supergroup} found a giant most frequently for
\[G=C_2 \times ((C_2  \times  C_2 \times  C_2)  \rtimes (C_2 \times  C_2))\] generated by 
\[
 g_1 = (1,9,6,3)(2,4,5,8) \quad g_2 =  (2,5)(3,9) \quad g_3 =
 (1,2)(3,8)(4,9)(5,6) \]
 \[ g_4 = 
 (1,6)(3,9)(7,10) \quad g_5 = 
 (1,5,6,2)(3,4,9,8)
\]
\item Among imprimitive groups,  \textit{Find Supergroup} found $G$ most frequently for  $G=C_{10}$.
\item Among imprimitive group,  \textit{Find Supergroup} found $G$ least often for  the  group $G$ of order $400$
generated by
\[
 g_1 = (1,6,4,9,7,10,8,3,5,2), \quad g_2 = 
 (2,9,6,3)(4,8,5,7), \quad g_3 = 
 (1,5,8,7,4)(2,6)(3,9)
\]
\item Among primitive groups,  \textit{Find Supergroup} found $G$ most frequently for $G=A_6 \rtimes C_2$. 
generated by 
\[
g_1 =   (1,6)(2,4)(3,10)(5,7)(8,9) \quad \quad g_2 =   (1,7,2)(3,8,5)(6,10,9)
\]
\item Among primitive groups,  \textit{Find Supergroup} found $G$ least often for the group $G$ of order $1440$ generated by 
\[
g_1 =  (2,7)(3,5)(9,10) \quad g_2 = 
 (1,9,10,5,4,6,8,3,7,2)
\]
\end{itemize}

The clear heuristic conclusion to draw from \autoref{fig:triangledata} is that for transitive non-giants, an effective algorithm for recovering $G$ is to amplify \textit{Find Supergroup} under the error-detector that $G$ is not a giant. The most frequent non-giant output for any such group is overwhelmingly the group itself. We leave it to future work to justify this statement rigorously. It is a phenomenon not fully explained by the proportion $\textbf{Giant}(10) \approx 0.254$ in \autoref{tab:jordan_giant_primitive}.

\subsection{Faster recovery via heuristic mean-thresholds} In this section, we test the performance of our algorithms on fixed sample sets.  In this setting, we sacrifice the rigorous confidence levels of our results, but we witness empirically that our ideas remain reliable.

\begin{figure}[!htpb]
\includegraphics[scale=0.4]{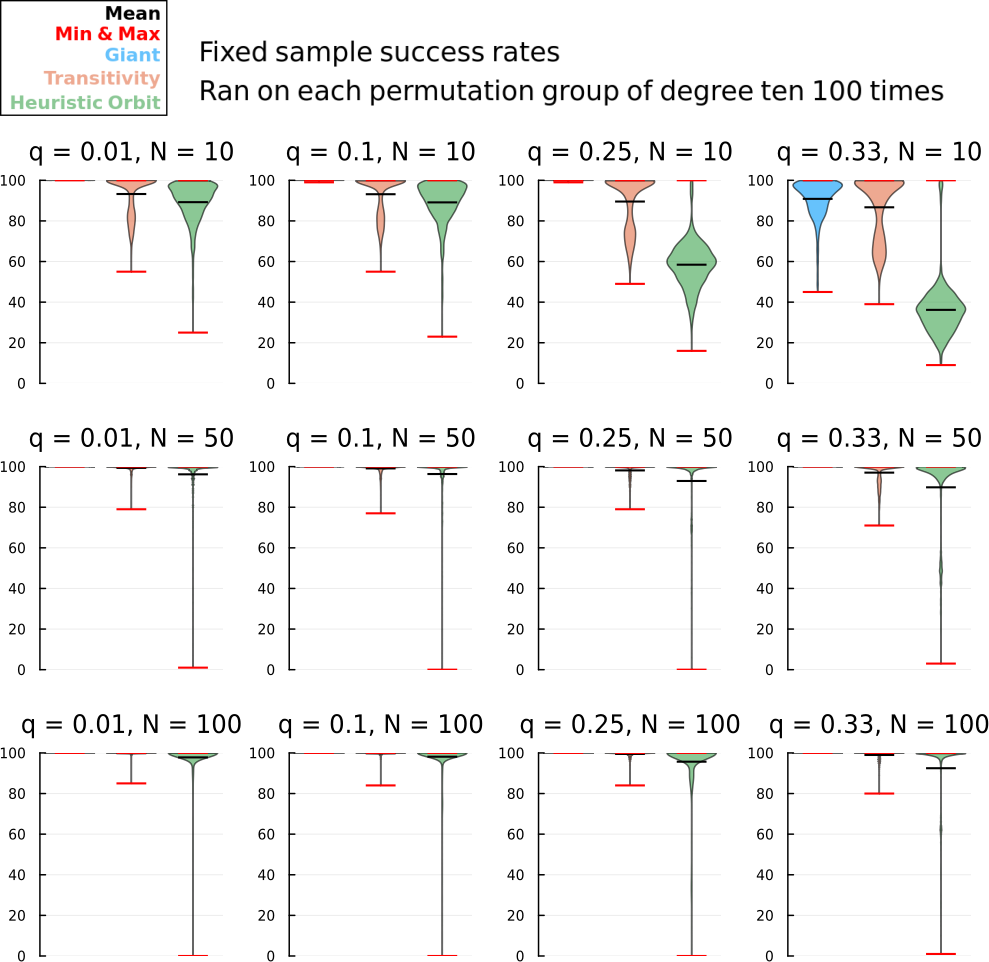}
\caption{For each pair $(q,N)$ shown, we display the violin plots of the success rates (of $100$ trials) of the giant test, transitivity test, and heuristic orbit recovery when applied to $100$ random samples of size $N$ pulled from the associated $X(G,p)$, as $G$ ranges through all $1593$ classes of subgroups of $S_{10}$.}
\label{fig:mle_experiment}
\end{figure}

We run the following experiment to assess how the giant test (\autoref{alg:gianttest}), transitivity test with $k=1$ (\autoref{alg:transitivitytest}), and \textit{non-adaptive} heuristic orbit recovery (\autoref{alg:HeuristicOrbitRecovery}) perform on fixed sample sizes. For each of the $1593$ conjugacy classes $[G]$ of subgroups of $S_{10}$ we pick $100$ random sample sets, each of some size $N$. Each sample is constructed by taking  $\lceil q\cdot N\rceil $ permutations uniformly and independently chosen from $S_{10}-G$ and the rest uniformly and independently chosen from $G$. On each of these $100$ samples, we run the giant test on the ${{N}\choose{2}}$ pairs of samples, the transitivity test on the $N$ samples, and the heuristic orbit recovery algorithm applied to the $N$ samples. For each class $[G]$, each $(q,N)$ pair, and each of these three algorithms, we obtain the number $\nu_{G,(q,N),\textrm{test}}$ of runs which were successful out of  $1593\cdot 3 \cdot 100 = 477900$ experiments. \autoref{fig:mle_experiment} shows violin plots for such $\nu_{G,(q,N),\textrm{test}}$ with $(q,N)$ and $``\textrm{test}"$ fixed. The $(q,N)$ range is $q \in \{0.01,0.1,0.25,0.33\}$ and $N \in \{10,50,100\}$, and $\tilde{p}$ is taken to be $0.33$. 

For all but the pair $(q,N) = (0.33,10)$, the giant test performs almost perfectly. While the transitivity test fails to be very reliable for $N=10$, it still succeeds more than half of the time and thus can be amplified, except for some groups when $(q,N)=(0.33,10)$. 

The heuristic orbit recovery results are the most interesting. For some groups, the performance is quite poor when $q=0.01$. This is due to a large difference between $q$ and the bound $\widetilde{p}$. For example, in the group $G = \mathbb{Z}_2^3 \times S_1 \times S_1$, the threshold in \autoref{alg:HeuristicOrbitRecovery} is $t = N \frac{\widetilde{p}}{2} = N\cdot 0.165$, which exceeds $N$ times the expected value of $Y^{i \sim j} = Y^{i \mapsto j}= \frac 1 8$. Thus, all points are decided, incorrectly, to be in their own orbit.  This error would be caught and handled in the \texttt{adaptive} version of \autoref{alg:HeuristicOrbitRecovery}. Our heuristic orbit recovery performs substantially better than our transitivity test, even though transitivity is a byproduct of orbit recovery. It would be interesting to try to improve any of our algorithms  beyond the proof-of-concept versions we have provided.

\bibliographystyle{abbrv}
\bibliography{myref}

\end{document}